\newtheorem{theorem}{Theorem}
\newtheorem{lemma}[theorem]{Lemma}
\newtheorem{proposition}[theorem]{Proposition}
\theoremstyle{definition}
\newtheorem{definition}[theorem]{Definition}
\newtheorem{remark}[theorem]{Remark}
\numberwithin{theorem}{section}
\numberwithin{equation}{section}
\def\Z{\mathbb Z}
\def\R{\mathbb R}
\newcommand{\Diff}{\mathop{\rm Diff}\nolimits}
\begin{document}

\title{Indefinite Morse $2$--functions; broken fibrations and generalizations}
\shorttitle{Indefinite Morse $2$--functions}

\authors{David T. Gay, Robion Kirby
\footnote{This work was partially supported by a grant from the Simons Foundation (\#210381 to David Gay), by South African National Research Foundation Focus Area grant FA2007042500033 and United States National Science Foundation grants EMSW21-RTG and DMS-1207721. } 
}
\address{Euclid Lab, 160 Milledge Terrace, Athens, GA 30606\\
Department of Mathematics, University of Georgia, Athens, GA 30602}
\secondaddress{University of California, Berkeley, CA 94720} 
\email{d.gay@euclidlab.org}
\secondemail{kirby@math.berkeley.edu}
 
\begin{abstract}

A Morse $2$--function is a generic smooth map from a smooth manifold to a surface. In the absence of definite folds (in which case we say that the Morse $2$--function is indefinite), these are natural generalizations of broken (Lefschetz) fibrations. We prove existence and uniqueness results for indefinite Morse $2$--functions mapping to arbitrary compact, oriented surfaces. ``Uniqueness'' means there is a set of moves which are sufficient to go between two homotopic indefinite Morse $2$--functions while remaining indefinite throughout.  We extend the existence and uniqueness results to indefinite, Morse $2$--functions with connected fibers.
\end{abstract}

\primaryclass{57M50}
\secondaryclass{57R17, 57R45}
\keywords{broken fibration, Morse function, Cerf theory, definite fold, elliptic umbilic}

\sloppy

\maketitlepage

\section{Introduction}

A Morse $2$--function on a smooth $n$--manifold $X$ is a generic smooth map from $X$ to a $2$--manifold, just as an ordinary Morse function is a generic smooth map to a $1$--manifold. The singularities are {\em folds} and {\em cusps}. Folds look locally like $(t,x_1,\ldots,x_{n-1}) \mapsto (t,f(x_1,\ldots,x_{n-1}))$ for a standard Morse singularity $f$, and cusps look locally like $(t,x_1,\ldots,x_{n-1}) \mapsto (t,f_t(x_1,\ldots,x_{n-1}))$ for a standard birth $f_t$ of a cancelling pair of Morse singularities. 

We develop techniques for working with Morse $2$--functions and generic homotopies between them, paying particular attention to (1) avoiding {\em definite folds}, in which the modelling function $f$ is a definite Morse singularity, i.e. a local extremum, and (2) guaranteeing connected fibers.
When definite folds are avoided, we say that the Morse $2$--function (or generic homotopy) is {\em indefinite}. When fibers are connected, we say that the function or homotopy is {\em fiber-connected}. The same adjectives also describe Morse functions and their homotopies, when definite singularities (local extrema) are avoided and when level sets are connected.

Let $X$ be a compact, connected, oriented, smooth ($C^\infty$) $n$--manifold and let $\Sigma$ be a compact, connected, oriented surface (with possibly empty, possibly disconnected, boundaries). We leave the non-oriented case for others to think about.
\begin{theorem}[Existence] \label{T:Existence}
 Let $g : \partial X \to \partial \Sigma$ be an indefinite, surjective, Morse function which extends to a map $G' : X \to \Sigma$. If $n>2$ and $G'_*(\pi_1(X))$ has finite index in $\pi_1(\Sigma)$, then $G'$ is homotopic rel. boundary to an indefinite Morse $2$--function $G : X \to \Sigma$. When $n>3$, if $g$ is fiber-connected and $G'_*(\pi_1(X)) = \pi_1(\Sigma)$ then we can arrange that $G$ is fiber-connected.
\end{theorem}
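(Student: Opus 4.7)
The plan is to proceed in three stages: (1) perturb $G'$ to a generic Morse $2$-function; (2) eliminate all interior definite folds via Cerf-theoretic moves, using the finite-index hypothesis; (3) for the fiber-connected addendum, merge disconnected fiber components via $1$-handle modifications. Stage (1) is standard jet transversality: $G'$ is homotopic rel $\partial X$ to a Morse $2$-function $G_0$ whose only singularities are folds and cusps, still matching $g$ on $\partial X$. Since $g$ is indefinite, no definite fold of $G_0$ meets $\partial X$, so all definite folds lie in the interior of $X$.

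The core of the argument is stage (2), and I expect this to be the principal difficulty. Choosing a handle decomposition of $\Sigma$ with a single $0$-handle reduces, up to keeping track of gluing data, to the case of working over a $2$-disk $D \subset \Sigma$ with prescribed indefinite behavior along $\partial D$. The basic move to eliminate a definite fold arc $\gamma \subset D$ --- say a circle of index-$0$ critical points bounding spherical fibers $S^{n-2}$ on one side --- is to introduce a cancelling pair of cusps inside $D$, producing a parallel \emph{indefinite} fold arc $\gamma'$, and then to drive $\gamma$ into $\gamma'$ through a swallowtail catastrophe. The effect is to raise the fiber Morse index and replace the local extremum by indefinite fold and cusp singularities. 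Performing the slide requires an arc in $X$ projecting to a loop in $\Sigma$ that transports the sphere-like fiber component across $\gamma$ to meet a fiber component on the other side of $\gamma'$; the finite-index condition on $G'_*\pi_1(X) \subset \pi_1(\Sigma)$ guarantees enough such arcs exist globally to realize the moves throughout $D$. The main obstacle is precisely the combinatorial and geometric bookkeeping needed to eliminate every definite fold simultaneously in a generic homotopy, without reintroducing definite folds elsewhere when swallowtails and cusps are created.

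For stage (3), assume $n > 3$ and $G'_*\pi_1(X) = \pi_1(\Sigma)$. After stage (2) we have an indefinite Morse $2$-function $G$. To enforce fiber-connectedness: if some regular fiber $F_p = G^{-1}(p)$ has distinct components $F_0, F_1$, pick any arc $\beta \subset X$ from $F_0$ to $F_1$; the loop $G \circ \beta$ represents a class in $\pi_1(\Sigma, p)$ which, by $\pi_1$-surjectivity, equals $G_*$ of some loop $\gamma$ based at $\beta(1) \in F_1$. Concatenating gives an arc $\alpha = \beta \cdot \gamma$ from $F_0$ to $F_1$ whose image in $\Sigma$ is null-homotopic; hence $\alpha$ lifts a small arc in $\Sigma$ based at $p$. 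Performing an elementary $1$-handle Morse modification along $\partial \alpha$ introduces a new indefinite fold (of index $1$ in the fiber direction) whose nearby fibers are connect sums of $F_0$ and $F_1$. Because $n - 2 \geq 2$, this connect sum yields a connected fiber manifold and no definite fold is created. Iterating over components of fibers reduces the fiber component count to $1$ throughout $\Sigma$, producing the desired fiber-connected indefinite Morse $2$-function.
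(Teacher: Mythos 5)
Your overall route differs genuinely from the paper's. The paper does not start with an arbitrary Morse $2$--function and eliminate definite folds by Cerf moves; it builds the indefinite Morse $2$--function from scratch by first choosing an indefinite, ordered Morse function $\tau : X \to I$ as the horizontal projection $t \circ G$, constructing local ``forward/backward handle'' models near critical points of $\tau$ so that attaching spheres lie in level sets of the vertical Morse functions, and filling in between critical values with indefinite Cerf graphics (Sections~4--5). The general surface case is then reduced to $\Sigma = B^2$ by an inductive cut-and-paste argument along arcs with connected fiber, together with a tubing lemma (Lemma~\ref{L:RectHomotopyToMorse2Homotopy}). Your elimination-of-folds approach is closer in spirit to Saeki's original argument (which the authors credit for the $\Sigma=S^2$ case), so it is a legitimate route in principle, but your sketch contains three substantive gaps.

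First, your ``basic move'' for eliminating a definite fold circle $\gamma$ is not established and is substantially harder than you make it sound. A single swallowtail catastrophe does not kill a circle of index-$0$ folds; it only converts part of the fold, leaving attached definite fold arcs terminating in cusps. The paper's Section~3 shows concretely (for $n=3$, but the phenomenon persists in all $n$) that replacing a circle of $0$--folds by $1$--folds requires a lengthy homotopy passing through eye births, several handle slides, Reidemeister-type crossings, and cusp cancellations (Figures~\ref{F:ZeroToOneA}--\ref{F:ZeroToOneSingularLoci}). To carry out your plan you would need precise cancellation hypotheses analogous to the unmerge, eye-death, and swallowtail-death lemmas (Lemmas~\ref{L:UnmergeCancellation}--\ref{L:SwallowtailCancellation}) plus some combinatorial covering argument to piece them together along all of $\gamma$; none of this is addressed.

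Second, you mislocate the role of the $\pi_1$ hypothesis. In the paper, finiteness of the index of $G'_*\pi_1(X)$ is used to lift to a finite cover (reducing to $\pi_1$-surjectivity), and $\pi_1$-surjectivity is used to produce arcs in $X$ whose $G$-image is null-homotopic in $\Sigma$, feeding the tubing construction that connects fiber components and underlies the reduction to $B^2$. Over a disk $D$, $\pi_1(D)$ is trivial, so the $\pi_1$ condition cannot be what ``guarantees enough arcs to realize the moves throughout $D$''; the place it actually bites is the induction on $-\chi(\Sigma)$ and the closed-surface case.

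Third, Stage 3 does not achieve \emph{global} fiber-connectedness. Connecting the components of one regular fiber $G^{-1}(p)$ does not make all regular fibers connected, since crossing a fold of index $n-2$ can still separate the fiber over a nearby base point. The paper handles this via the ``ordered'' condition on vertical Morse functions (Lemma~\ref{L:Connected2Ordered} and the order of indices in the handle construction), a global structural constraint maintained through the entire construction; a pointwise repair, even iterated, gives no convergence argument. Moreover your ``elementary $1$--handle Morse modification'' needs a precise definition: the operation the paper actually performs is a homotopy of $G$ supported near an arc (with controlled preimage of $q$, as in Lemma~\ref{L:RectHomotopyToMorse2Homotopy}), not a handle attachment to $X$, and one must check this homotopy creates no definite folds and no extraneous fiber components.
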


\begin{theorem}[Uniqueness] \label{T:Uniqueness}
 Let $G_0, G_1 : X \to \Sigma$ be indefinite Morse $2$--functions which agree on $\partial X$ and are homotopic rel. boundary. If $n>3$ then $G_0$ and $G_1$ are homotopic through an indefinite generic homotopy $G_s$. If in addition $G_0$ and $G_1$ are fiber-connected then we can also arrange that $G_s$ is fiber-connected.
\end{theorem}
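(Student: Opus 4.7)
The plan is to reduce Theorem \ref{T:Uniqueness} to a parametric version of the argument used for Theorem \ref{T:Existence}. First, by standard genericity results, I would choose any generic homotopy $\{G_s\}_{s\in[0,1]}$ from $G_0$ to $G_1$, fixed on $\partial X$. At all but finitely many $s$, $G_s$ is a Morse $2$--function, and at the finitely many exceptional $s$ one sees codimension--$1$ events such as swallowtails, births and deaths of cusps, and tangencies of fold images in $\Sigma$. Viewing the homotopy as a single map $\tilde G : X \times [0,1] \to \Sigma$, its fold locus $F \subset X \times [0,1]$ is a smoothly embedded surface, with $1$--dimensional cusp arcs and isolated swallowtail points appearing as its singular strata.

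Second, I would aim to modify $\tilde G$ rel $X \times \{0,1\}$ so that the subset $F_{\text{def}} \subset F$ on which $G_s$ has a definite fold is empty. Since $G_0$ and $G_1$ are themselves indefinite, $F_{\text{def}}$ is disjoint from $X \times \{0,1\}$, and $\partial F_{\text{def}}$ lies entirely on cusp arcs of $\tilde G$. The strategy is to eliminate each component of $F_{\text{def}}$ by parametric analogs of the basic moves from the existence proof: a parametric flip, which introduces a cancelling pair of cusps on an indefinite fold arc and then slides them to convert an adjacent definite fold arc into an indefinite one, and a birth--death cancellation, which removes a closed component of $F_{\text{def}}$ by introducing and then killing a cancelling pair. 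The hypothesis $n>3$ ensures the fibers of $G_s$ are at least $2$--dimensional and that $X\times[0,1]$ has dimension at least $5$, giving the room needed for these handle-slides and cusp merges to be performed disjointly from the rest of $F$.

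The main obstacle, I expect, will be the parametric bookkeeping. Each modification is a local change in $X\times[0,1]$, but I need to verify that (a) it does not create a new definite fold at some other time $s$, and (b) it interacts compatibly with the pre-existing swallowtails and cusp events of $\tilde G$. This amounts to developing a one-parameter Cerf theory for indefinite Morse $2$--functions: setting up local models for each move, showing that the moves commute with the neighbouring codimension--$1$ strata (possibly after small perturbations of $\tilde G$), and checking that newly introduced cusps can be isotoped away from regions where they would obstruct further moves.

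Finally, for the fiber-connected refinement, I would adapt each move to preserve fiber connectivity at every time $s$. When a move threatens to disconnect a fiber of some $G_s$, it is reconnected by the parametric analog of adding an indefinite index--$1$ fold, just as in the fiber-connected case of Theorem \ref{T:Existence}. The surjectivity condition $G_*(\pi_1 X)=\pi_1\Sigma$ implicit in the fiber-connected hypotheses on $G_0$ and $G_1$ is precisely what guarantees that such reconnections can always be done without reintroducing a definite fold, giving the refined statement.
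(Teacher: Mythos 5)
Your proposal correctly identifies the goal (eliminate the surface $F_{\mathrm{def}}$ of definite folds in a generic $1$--parameter family) and the right preliminary setup, but the core of the proof is exactly the ``parametric bookkeeping'' you defer, and it is not merely bookkeeping. Two specific things are missing.

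First, the paper does not attack the surface of definite folds directly in $X \times [0,1]$; it reduces in two stages. Theorem~\ref{T:Uniqueness} for general $\Sigma$ is reduced (via Lemma~\ref{L:RectHomotopyToMorse2Homotopy} and a Thom--Pontrjagin--style zig-zag along arcs $a, a'$ cutting down $\chi(\Sigma)$) to the case $\Sigma = B^2 = I^2$. For $I^2$, it then reduces further: one arranges $\tau_s = t \circ G_s$ to be a controlled indefinite homotopy of $I$--valued Morse functions (Lemmas~\ref{L:SquareDefFromIDef} and~\ref{L:SquareDefNearCritPoints}), so that over the product pieces between critical values of $\tau$, the remaining problem is a $2$--parameter homotopy $g_{s,t}$ of $I$--valued Morse functions (one parameter is $s$, the other is the horizontal coordinate $t$). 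That second reduction is what makes the remaining singularity theory tractable, and it is absent from your plan.

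Second, and more serious: the moves you name (``parametric flip'', ``birth--death cancellation'') are the codimension--$1$ events already present in a generic homotopy, and they are not sufficient to remove a component of $F_{\mathrm{def}}$. Eliminating a surface of $0$--folds in a $2$--parameter family requires passing through two specific codimension--$2$ singularities: the \emph{butterfly} (to push swallowtails off the $0$--fold sheet so each component becomes embedded, Figure~\ref{F:CutBadSwallowtail}) and the \emph{monkey saddle / elliptic umbilic} (to replace a remaining triangular region of $0$--folds, cancelled by three different $1$--folds over the three edges, with a triangle of $2$--folds, Figures~\ref{F:TripleCover}--\ref{F:BelowMonkeyHandles}). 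This is the content of Theorem~\ref{T:MorseDefOfDefsExistence}, and the combinatorics of covering the $0$--fold sheet by disks of cancelling $1$--folds, reducing to trivalent nerves, and then invoking the $B^3$ unfolding of the monkey saddle is the actual proof. Your proposal asserts the elimination is possible by ``parametric analogs'' without identifying these moves or why index constraints ($n > 3$ so that index $2$ is indefinite for the vertical Morse function) make them available. Similarly, the fiber-connected refinement is not just a matter of $\pi_1$--surjectivity; it hinges on the ordered/almost-ordered conditions propagated through Lemma~\ref{L:Connected2Ordered} and a delicate ``helper handle'' argument in the $m = 3$ case of Theorem~\ref{T:MorseDefOfDefsExistence}. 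As written, your proposal is a plausible outline of what should be true, but it does not contain the ideas needed to discharge the one step you flag as the main obstacle.
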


These results are analogs of the following facts in ordinary Morse theory:

\begin{theorem}[Existence] \label{T:1Existence}
 A compact, connected $m$--dimensional cobordism $M$ between $F_0 \neq \emptyset$ and $F_1 \neq \emptyset$ supports an indefinite Morse function $g : M \to I=[0,1]=B^1$. If $m>2$ and $F_0$ and $F_1$ are connected then we can also arrange that $g$ is fiber-connected. Any homotopically nontrival map $g'$ from a closed connected $m$--manifold $M$ to $S^1$ is homotopic to an indefinite Morse function $g : M \to S^1$. If $m > 2$ and  $g'_*(\pi_1(M)) = \pi_1(S^1)$ then we can also arrange that $g$ is fiber-connected.
\end{theorem}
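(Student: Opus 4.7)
The plan is to treat the cobordism statement first and then deduce the $S^1$-valued statement by cutting along a regular level set. For the cobordism case, start with any Morse function $g_0 : M \to I$ extending the given boundary data (produced by perturbing an arbitrary smooth extension). The remaining task is to eliminate interior index-$0$ and index-$m$ critical points. An interior index-$0$ critical point $p$ at level $c$ produces a new component of the sublevel set $g_0^{-1}[0,c+\e]$; since $M$ is connected and meets $F_0$, this component must eventually join the main body across some index-$1$ critical point $q$. After suitable handle slides one arranges that the ascending manifold of $p$ meets the descending manifold of $q$ transversely in a single point, at which stage the Smale--Morse cancellation lemma removes the pair. Iterating eliminates all interior minima; interior maxima are handled dually using $F_1 \neq \emptyset$.

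For the fiber-connected refinement with $m > 2$, rearrange (by a self-indexing argument) so that all index-$1$ critical values lie below all index-$(m-1)$ values. The number of components of a level set changes only when the parameter crosses a critical value of index $1$ or $m-1$, since for $1 < k < m-1$ the surgery removes $S^{k-1} \times D^{m-k}$ of codimension $\geq 2$ in the $(m-1)$-dimensional fiber and cannot affect the component count. Going up from the connected $F_0$ through the index-$1$ block: each crossing is a $1$-surgery in the fiber which, for $m \geq 3$, can only merge components or leave them alone, so connectivity is preserved. Dually, going down from connected $F_1$ through the index-$(m-1)$ block corresponds to the inverse surgery, which again has index $1$ in the $(m-1)$-dimensional fiber, so connectivity is preserved. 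In the middle zone no component change occurs, so every fiber is connected.

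For the $S^1$-valued case, perturb $g'$ to a Morse function. Since $[g'] \neq 0 \in H^1(M;\Z)$, the preimage $F := (g')^{-1}(p)$ of any regular value is a nonempty codimension-$1$ submanifold. Cut $M$ along $F$ to obtain a compact manifold $M'$ with boundary two copies of $F$; each component of $M'$ is a cobordism between (nonempty) subsets of these copies, and applying the cobordism existence result to each component (then re-gluing) yields an indefinite Morse function $M \to S^1$. For the fiber-connected upgrade with $g'_*$ surjective on $\pi_1$, a loop $\gamma \subset M$ mapping to a generator of $\pi_1(S^1)$ meets some component $F_0 \subset F$ an odd number of times after transverse perturbation; using births of cancelling pairs followed by handle slides (available when $m > 2$), merge the other components of $F$ into $F_0$, and then apply the fiber-connected cobordism result to the cut manifold.

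The main obstacle is the cancellation argument: pairing each interior index-$0$ critical point with an appropriate index-$1$ critical point, and then arranging the attaching data to intersect transversely in a single point, requires careful manipulation of the gradient-like vector field via handle slides. These slides fail in dimension $m = 2$, which accounts for the $m > 2$ hypothesis for the fiber-connected refinements; the remainder of the argument is classical Morse--Cerf bookkeeping.
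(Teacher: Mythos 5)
Your treatment of the $B^1$-valued cobordism case is essentially the paper's argument: cancel interior definite critical points against adjacent index-$1$ or index-$(m-1)$ critical points (possible since $M$ is connected and meets both $F_0$ and $F_1$), then order the critical values so that indefinite plus ordered yields connected level sets; this is the content of Theorem~\ref{T:MorseExistence} and the lemma preceding Section~3.

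The $S^1$-valued case has a genuine gap. You cut $M$ along $F = (g')^{-1}(p)$ first, and then assert that ``each component of $M'$ is a cobordism between (nonempty) subsets of these copies'' of $F$. That assertion is false in general. If $F$ is disconnected, a component of $F$ can bound a ``bubble'' --- for instance a small $S^{m-1}$ bounding a ball on which $g'$ dips just below $p$. Cutting along $F$ then produces a component $C$ of $M'$ with $\partial C$ entirely in one copy of $F$ (a cobordism to $\emptyset$). The cobordism existence result requires both boundary pieces to be nonempty, and indeed no indefinite Morse function on a ball with regular boundary can exist (it must have an interior extremum). So the cut-first strategy breaks down before the cobordism theorem can be applied. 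The paper's order of operations is the reverse and is essential: first lift to a finite cover to reduce to the $\pi_1$-surjective case, then homotope $g$ so that the chosen fiber $g^{-1}(0)$ is connected (Proposition~\ref{P:Arcs} and Lemma~\ref{L:PathHomotopyToMorseHomotopy}), and only then cut, which now yields a single connected cobordism with nonempty boundary on both sides.

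Your proposed mechanism for connecting fiber components --- ``births of cancelling pairs followed by handle slides'' --- is also not the right move and is left too vague to check. Births of cancelling pairs increase the number of critical points without reconnecting a disconnected fiber, and handle slides do not change the fiber topology. What is actually needed is the tubing homotopy of Lemma~\ref{L:PathHomotopyToMorseHomotopy}: choose an arc $\tau$ meeting $F$ transversely exactly at its two endpoints in two different components, with $g\circ\tau$ nullhomotopic rel endpoints, and push the map in a normal $B^{m-1}$-bundle of $\tau$ across the regular value so that two disks in $F$ are replaced by a tube $I\times S^{m-2}$. This introduces no new critical points and, by the index-$1$ saddle structure of the preimage in the parameter direction, never creates extraneous fiber components during the homotopy. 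Proposition~\ref{P:Arcs} is what guarantees a suitable arc exists (using $\pi_1$-surjectivity) and can be taken disjoint from one side of a collar of $F$, which is needed for the tube to actually lie over the regular value.
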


\begin{theorem}[Uniqueness] \label{T:1Uniqueness}
 In both cases above, two homotopic (rel. $\partial$) indefinite Morse functions $g_0,g_1 : M \to N^1$, where $N=B^1$ or $N=S^1$, are homotopic through an indefinite generic homotopy $g_s$, and if $m > 2$ and $g_0$ and $g_1$ are fiber-connected then we can arrange that $g_s$ is fiber-connected for all $s$.
\end{theorem}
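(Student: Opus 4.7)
The plan is to start from an arbitrary generic homotopy $g_s$ between $g_0$ and $g_1$, produced by the classical Cerf theorem, and then locally modify it to eliminate every occurrence of a definite critical point along the way, keeping the endpoints fixed throughout.

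Concretely, I would first arrange that the Cerf graphic of $g_s$ in $[0,1] \times N$ consists of smooth arcs of critical values labelled by Morse index, meeting only at finitely many interior birth/death cusps (pairing arcs of indices $k$ and $k+1$) and finitely many transverse crossings. Since $g_0$ and $g_1$ are indefinite, no definite arc (index $0$ or $m$) reaches the boundary slices $\{0\} \times N$ and $\{1\} \times N$; thus every definite arc is born at some interior time $s_1$ and dies at some interior time $s_2$.

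Next, I would remove definite arcs one at a time. For an index $0$ arc born at time $s_1$ with an index $1$ partner, the newly created minimum sits in a small ball in $M$, and the fact that $g_{s_1^-}$ is an indefinite Morse function on the connected manifold $M$ guarantees the existence of an indefinite index $1$ critical point of $g_{s_1^+}$ whose descending disk, after a handle slide, meets the boundary of this ball in a single point. Cancelling these two via Smale's lemma eliminates the newborn definite minimum and leaves the original birth-partner index $1$ arc alive; an analogous move just before the death time $s_2$ pairs the surviving index $1$ arc with an auxiliary index $2$ cancellation. The net effect is a modification supported in a neighbourhood of the arc which contains no definite critical values. Index $m$ arcs are handled symmetrically using indefinite index $m-1$ critical points. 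Iterating eliminates all definite arcs.

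For the fiber-connected refinement, one starts from the indefinite homotopy just obtained and further modifies it so that every regular value of every $g_s$ has connected preimage. Using $m > 2$, one first orders the critical values of each $g_s$ so that index $1$ arcs lie below all higher-index arcs and index $m-1$ arcs lie above all lower-index arcs, then cancels any index $1$ arc whose critical point disconnects its sub-level set against an adjacent index $2$ arc via a handle slide, and symmetrically for index $m-1$ arcs; this mirrors the fiber-connected clause of Theorem~\ref{T:1Existence} carried out in the $1$--parameter setting.

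The main obstacle is the bookkeeping. The handle slides used to cancel a definite arc may encounter other critical points, crossings, or further definite arcs, and the auxiliary births and deaths must be scheduled so that all intermediate generic homotopies remain indefinite, and then fiber-connected. Verifying this requires a careful induction on the combinatorial complexity of the graphic, using the connectedness of $M$ and the dimensional freedom $m \geq 2$ (respectively $m > 2$) to guarantee that the necessary handle slides can be performed transversally to the existing graphic without creating new definite singularities.
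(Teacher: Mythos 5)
Your high-level plan for the $B^1$ case (start from a generic Cerf homotopy, then surgically remove definite arcs one at a time by cancelling against nearby index-$1$ or index-$(m-1)$ arcs, then reorder for fiber-connectedness) is the same shape as the paper's, which reduces to Lemma~\ref{L:Connected2Ordered} followed by Theorem~\ref{T:MorseDeformationExistence}. But there are two genuine gaps.

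First, the actual mechanism for removing a definite arc is much harder than cancelling via Smale's lemma at the birth and death endpoints. The index $0$ critical point persists over the entire interval $[s_1,s_2]$, and the index $1$ critical point that cancels it typically \emph{changes} over that interval: there may be no single $1$-fold that cancels the $0$-fold the whole way through. The paper handles this by covering $[s_1,s_2]$ by finitely many subintervals, each with a fixed cancelling $1$-fold, and then applying three explicit two-parameter cancellation moves (Lemma~\ref{L:UnmergeCancellation}, Lemma~\ref{L:EyeCancellation}, Lemma~\ref{L:SwallowtailCancellation}), each corresponding to a codimension-two singularity (unmerge, eye death, swallowtail death). Where two consecutive cancelling $1$-folds hand off, one must pass through either an eye or a swallowtail; neither is a ``handle slide'' nor a single Smale cancellation. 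Your proposal collapses all of this into the last ``bookkeeping'' paragraph, but this covering-plus-singularity argument is the actual content of the proof, not bookkeeping around it.

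Second, and more seriously, the $S^1$-valued case with fiber-connectedness is not addressed. Your fiber-connectedness step relies on ordering critical values so that ``index $1$ arcs lie below all higher-index arcs,'' but for $S^1$-valued Morse functions there is no global notion of ``below,'' so the ordering argument does not even make sense. The paper instead proves fiber-connectedness over $S^1$ by a zig-zag argument: it first uses a Thom--Pontrjagin-type tubing construction (Proposition~\ref{P:Arcs} together with Lemma~\ref{L:PathHomotopyToMorseHomotopy}) to modify the family $g_s$, supported in a neighborhood of carefully chosen arcs, so that $g_s^{-1}(0)$ and $g_s^{-1}(1/2)$ stay connected for all $s$ without ever creating extraneous components mid-homotopy; it then alternately cuts $M$ open along these two level sets at a sequence of times $t_0 < t_1 < \cdots$ and applies the already-proved $B^1$ result on each piece. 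The paper points out in the introduction that this $S^1$ fiber-connectedness assertion is new, which makes its complete absence from your proposal the more significant omission.
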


\begin{remark}
 Because our proofs of the above theorems begin with the case of maps to $I \times I$ or $I$ and are completed with Thom-Pontrjagin type arguments, all the homotopies constructed can be taken to be homotopic to given homotopies.
\end{remark}

The motivation for generalizing Morse functions and Cerf theory~\cite{Cerf} from dimension one to dimension two comes originally from
the importance of Lefschetz fibrations in complex and symplectic
geometry, and the new ideas around broken Lefschetz fibrations.  After
LeBrun~\cite{LeBrun} and Honda~\cite{HondaNearSymplecticExistence, HondaLocalProperties} showed that a smooth $4$-manifold $X^4$
with $b^+_2 > 0$ has a {\it near symplectic form} (a closed $2$-form
$\omega$ with $\omega \wedge \omega \geq 0$ and zero only on a smooth
embedded $1$-manifold $Z$ in $X$), Auroux, Donaldson and Katzarkov~\cite{ADK} proved that such $4$-manifolds are Lefschetz pencils in the
complement of $Z$, where $Z$ mapped onto latitudes of $S^2$.
Gay and Kirby~\cite{GayKirbyBLFs}, Lekili~\cite{LekiliWrinkled}, Baykur~\cite{BaykurExist} and Akbulut and Karakurt~\cite{AkbulutKarakurt}
extended this result to all smooth, oriented, compact $4$-manifolds as
broken fibrations (not just pencils).

One aim was to define invariants by counting pseudo-holomorphic curves
in $X^4$ which limit on $Z$, as had been done in the symplectic case
by Taubes~\cite{TaubesSWGR} and Usher~\cite{Usher}.  To do this, Perutz~\cite{PerutzI, PerutzII} defined his
Lagrangian matching invariants for a broken Lefschetz fibration, but
to get invariants of the underlying smooth $4$-manifold, one needs
{\it moves} between broken Lefschetz fibrations, preserving connectedness
of fibers, under which the Lagrangian matching invariants are preserved.
Our uniqueness theorem is intended to provide these moves, and thus
provide purely topological definitions of invariants.

Note that maps $X^4 \to S^2$ are partitioned according to their
homotopy class into the elements of the cohomotopy set $\pi^2 (X^4)$,
calculated homotopically by Taylor~\cite{Taylor} and geometrically by
Kirby, Melvin and Teichner~\cite{KMT}.  It is not clear how this
partitioning relates to known invariants, but all elements are
realized by indefinite, fiber connected, Morse $2$-functions.

Theorem~\ref{T:Existence}, in the case of $\Sigma = S^2$ and without
fiber-connectedness, is originally due to Saeki \cite{Saeki}, who also
pointed out that the finiteness of the index $[G_*^{\prime}
  (\pi_1(X)):\pi_1(\Sigma)]$ is a necessary condition.  A short proof
of existence for closed $X$ to $S^2$ is sketched in~\cite{GayKirbyPNAS}.  A
significant step forward in the uniqueness case was provided by Lekili~\cite{LekiliWrinkled}
when he reintroduced singularity
theory into the subject and showed how to go back and forth between
Lefschetz singularities and cusps on fold curves.

Of course there is an extensive history behind this paper in the world of singularity theory, which is too long to present, and an extensive history in complex algebraic geometry in the study of honest Lefschetz fibrations. A purely topological precedent lies in the study of round handles; see~\cite{Asimov} and~\cite{BaykurSunukjian}, for example.

Theorem~\ref{T:Uniqueness}, when $n=4$ and without fiber-connectedness, was originally proved by Williams~\cite{Williams}. Theorem~\ref{T:1Existence} is standard, with some of it proved in~\cite{Saeki}. The $B^1$--valued (cobordism) case of Theorem~\ref{T:1Uniqueness} is an essential ingredient in developing the calculus of framed links for $3$--manifolds and thus appears in~\cite{Kirby}. It seems that the fiber-connected assertion in the $S^1$--valued case of Theorem~\ref{T:1Uniqueness} is a new result, and was originally posed to us as a question by Katrin Wehrheim and Chris Woodward.

If we remove the adjectives ``indefinite'' and ``fiber-connected'' from the above theorems then the theorems become simply the facts that Morse functions, Morse $2$--functions, and generic homotopies between them are, in fact, generic. Although in the above discussion we simply stated that Morse functions, Morse $2$--functions and the homotopies we are calling ``generic'' are actually generic, in fact the definitions we prefer are in terms of local models, and the fact that maps and homotopies with these local models are generic (and in fact stable) is a standard result in singularity theory. 

To prove Theorems~\ref{T:Existence} and~\ref{T:Uniqueness}, we spend most of our time on the case where $\Sigma=B^2$, the disk, and in fact think of $B^2$ as the square $I^2 = I \times I$. Here the natural structure on the $n$--dimensional domain $X$ of a Morse $2$--function $G : X \to I^2$ is that of a cobordism with sides from $M_0$ to $M_1$, where $M_0$ is an $(n-1)$--dimensional cobordism from $F_{00}$ to $F_{01}$ and $M_1$ is an $(n-1)$--dimensional cobordism from $F_{10}$ to $F_{11}$, with $F_{00} \cong F_{10}$ and $F_{01} \cong F_{11}$. We ask that this cobordism structure should be mapped to the cobordism structure on $I^2$ as a cobordism from $I$ to $I$, and the boundary data comes in the form of $I$--valued Morse functions on $M_0$ and $M_1$. See Figure~\ref{F:Intro}.

\begin{figure}[ht!]
\labellist
\small\hair 2pt
\pinlabel $t$ [l] at 199 3
\pinlabel $z$ [b] at 8 130
\endlabellist
\begin{center}
\includegraphics{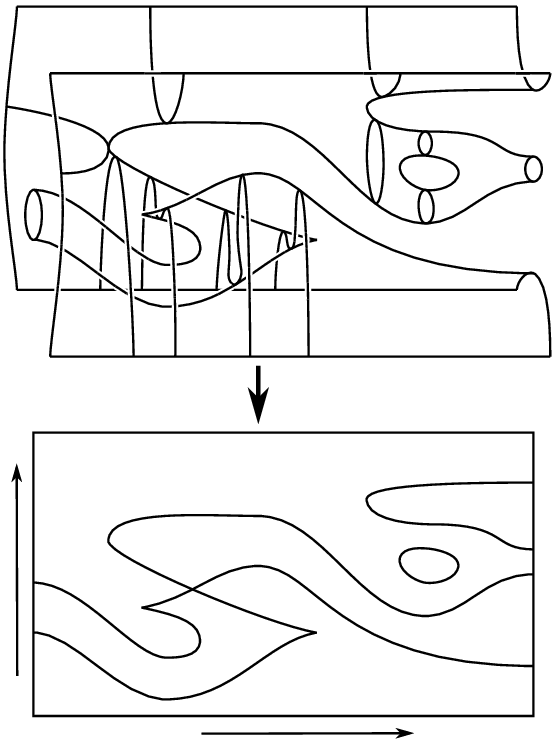}
\caption{A Morse $2$--function on a surface, mapping to the square $I \times I$.} \label{F:Intro}
\end{center}
\end{figure}

Consider the special case where $X = [0,1] \times M$ and $G(t,p) = (t,g_t(p))$, a generic homotopy between Morse functions $g_0, g_1 : M \to [0,1]$. Then removing definite folds from $G$ is the same as removing definite critical points from $g_t$, and this is done in Section~4 and also in~\cite{Kirby}. Fibers remain connected, and therefore existence is done in this special case. For uniqueness, suppose we have a generic $2$--parameter family $g_{s,t}$ between $g_{0,t}$ and $g_{1,t}$, giving a generic homotopy $G_s(t,p) = g_{s,t}(p)$. The $2$--dimensional definite folds are shown to be removable in Section~4 by use of singularity theory, in particular, use of the codimension two singularities called the {\it butterfly} and the {\it elliptic umbilic}.

For existence in the general case of a cobordism $(X,M_0,M_1) \to (I \times I, \{0\} \times I, \{1\} \times I)$, choose a Morse function $\tau : X \to I$ with no definite critical points. We will construct the indefinite Morse $2$--function $G$ so that $t \circ G = \tau$, where $(t,z)$ are coordinates on $I \times I$. Choose times $t_a$ and $t_b$ just before and after a critical value of $\tau$. At the critical point there is a standard function in local coordinates giving the descending sphere. Choose a $z$--valued Morse function $\zeta_a$ on the slice $\tau^{-1}(t_a)$ such that the descending sphere lies in a level set $\zeta_a^{-1}(z_a)$, which will mean that the descending sphere lies in the fiber of $G$ over $(t_a,z_a)$. 
Away from the local coordinates, we essentially have a product between $\tau^{-1}(t_a)$ and $\tau^{-1}(t_b)$ and the Morse function on $\tau^{-1}(t_a)$ determines a Morse function on $\tau^{-1}(t_b)$. 

We finish the existence outline by filling in the gaps between the strips around critical values by choosing Cerf graphics without definite folds; i.e. appealing to the existence of generic homotopies between ordinary Morse functions without definite critical points. Thus we get images of the folds in $I \times I$ as in Figure~\ref{F:IntroSimple}.
\begin{figure}[ht!]
\labellist
\small\hair 2pt
\pinlabel $t$ [l] at 195 5
\pinlabel $z$ [b] at 4 132
\pinlabel $t_a$ [b] at 89 151
\pinlabel $t_b$ [b] at 96 151
\pinlabel $t_a'$ [b] at 168 151
\pinlabel $t_b'$ [b] at 176 151
\endlabellist
\begin{center}
\includegraphics{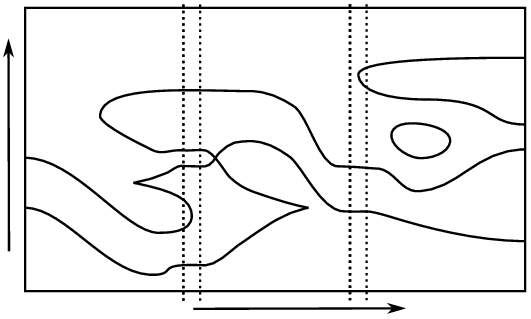}
\caption{An example illustrating a Cerf graphic in between two critical values of $\tau = t \circ G$.} \label{F:IntroSimple}
\end{center}
\end{figure}

In our proof, we would produce the Morse function $\zeta_a$ mentioned above and the Cerf graphic that connects it to an earlier $\zeta$ via a very general argument which, in any particular application, should be replaced by something more explicit. An interesting and illustrative case is that of a (horizontal) Morse
function $\tau:X^4 \to R$, a critical point $p$ of index $2$, a given Morse
function $\zeta: M_a^3 \to R$ where $M_a$ is the level set of $\tau$ at $t_a$ just below
$\tau(p)$, and the attaching circle $C$ of the descending disk from $p$ lying in
$M_a$, but not in a level set of $\zeta$.  But we wish it to lie in a level
set, for that will be a fiber of the eventual Morse $2$-function. In other words, before attaching the handle associated to this critical point, we need to isotope $C$ and construct a Cerf graphic from the given $\zeta$ to a new Morse function $\zeta_a$ such that $C$ lies in a level set.  See
Figure ~\ref{F:ResolvingCrossings}.

\begin{figure}
\labellist
\small\hair 2pt
\pinlabel $z$ [r] at 8 72
\pinlabel $t$ [t] at 99 8
\pinlabel $M_a$ [b] at 44 136
\pinlabel $t_a$ [br] at 43 9
\pinlabel $t_b$ [bl] at 61 9
\pinlabel $p$ [r] at 52 72
\pinlabel $1$ [bl] at 69 32
\pinlabel $2$ [bl] at 69 57
\pinlabel $1$ [bl] at 69 90
\pinlabel $2$ [bl] at 69 115
\pinlabel $C$ [tl] at 145 56
\pinlabel $C$ [tr] at 175 56
\pinlabel $C$ [tl] at 201 33
\pinlabel $C$ [tr] at 277 33
\pinlabel {cusp $2$--handle} [b] at 239 116
\pinlabel {cusp $1$--handle} [l] at 264 51
\pinlabel {dual $1$--handle} [l] at 249 71
\endlabellist
\begin{center}
\includegraphics{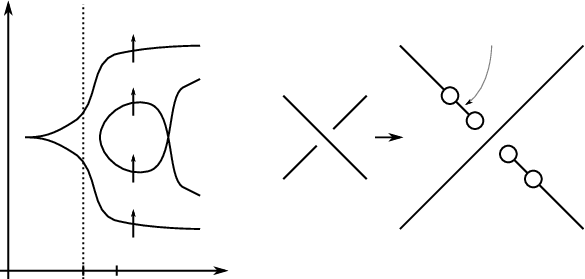}
\caption{Resolving crossings.} \label{F:ResolvingCrossings}
\end{center}
\end{figure}

First we can isotope $C$ into a region $[-1,1] \times F$ where $F$ is a level set of $M_a$ with respect to $\zeta$. After generically projecting $C$ into $0 \times F = F$, we
get crossings, and these have to be resolved somehow.  For each crossing a
cusp consisting of a cancelling $1$-$2$--pair of critical points must be created
at a lower level of $\tau$, that is, before trying to embed $C$ in $F$, so
that they are available to remove the crossing. 

Then to construct $\zeta$ on $M_b = \tau^{-1}(t_b)$ near the crossing, we modify the $\zeta$ on $M_a$ as follows. The $1$-handle is
attached first, on either side of the arc of $C$ which is the under crossing.
Then a $2$--handle is attached along $C$, with the crossing having been resolved by sending one strand of $C$ over the cusp $1$-handle. Next we must add the dual $1$-handle of $C$.
This is attached to the $0$-sphere bundle (which is the boundary of the
normal line bundle to $C$ in $F$), and this $0$-sphere can be placed where
convenient.  In our case it is as drawn in Figure~\ref{F:ResolvingCrossings}.  Finally,
the $2$-handle of the cusp, which must cancel its $1$-handle, goes over
each of the $1$-handles once, as drawn.  This describes how $\zeta$ changes locally at each crossing while shifting from $M_a$ to $M_b$.   At each crossing of $C$ in $F$, a cusp is added and the genus of the fiber $F$ is raised by one.

Returning to the main outline, the same ideas are used for uniqueness in Section~5. We are given two maps $G_0, G_1 : (X,M_0,M_1) \to (I \times I, \{0\} \times I, \{1\} \times I)$ and $\tau_0 = t \circ G_0$ and $\tau_1 = t \circ G_1$ are homotopic indefinite Morse functions. Appealing again to the existence of indefinite homotopies between Morse functions, we get a generic homotopy $\tau_s$ from $\tau_0$ to $\tau_1$ with no definite critical points. This homotopy is a sequence of indefinite births, followed by changes of heights of critical values, followed by deaths. We first construct the homotopy $G_s$ for $s \in [0,1/4] \cup [3/4,1]$ so as to arrange that $t \circ G_s$ realizes the appropriate births and height changes for $s \in [0,1/4]$ and the appropriate deaths for $s \in [3/4,1]$. For example, a birth is achieved by the introduction of an eye followed by a kink, as in Figure~\ref{F:SquareDefFromTauBirth} (see Section~5). Then we have arranged that $t \circ G_{1/4} = t \circ G_{3/4}$ and we construct $G_s$ for $s \in [1/4,3/4]
$ keeping $t \circ G_s$ fixed. Here we end up appealing again to the existence of indefinite $2$--parameter homotopies between ordinary Morse functions, as in Section~4, where now the Morse functions are of the form $z \circ G|_{G^{-1}(\{t\} \times I)}$.

Once the case of image equal to $I \times I$ is done, it is not hard
using a {\it zig-zag} argument to extend the
theorems to other surfaces (see Section 6).  Extra care with
Thom-Pontrjagin type arguments is needed to keep fibers connected.

Although our motivation comes from the $n=4$ case, the arguments all
generalize in a straightforward manner to all dimensions, save
occasionally troubles with dimensions $\leq 3$.  It is hoped that in
other dimensions, even $3$, the theorems will be useful.

\begin{remark}
 In this paper we have assumed $X$ is $0$--connected and we have removed definite folds ($0$-- and $(n-1)$--folds) when $n \geq 3$, and removed them in $1$--parameter families when $n \geq 4$. One could speculate that,  if $X$ is $1$--connected, we could remove $0$--, $1$--, $(n-2)$-- and $(n-1)$--folds for $n \geq 5$ (existence) or $n \geq 6$ (uniqueness). In particular, simply connected $5$--manifolds would have only $2$--folds and no cusps. This speculation would further generalize to $k$--connected $n$--manifolds with $n \geq 2k+3$ (existence) or $n \geq 2k+4$ (uniqueness).
\end{remark}

We would like to thank Denis Auroux, {\.I}nan{\c{c}} Baykur, Michael Freedman, Yank{\i} Lekili, Kevin Walker, Katrin Wehrheim and Jonathan Williams for helpful discussions during the preparation of this paper. We are also extremely grateful for the care and time which our anonymous referee put into the paper, which has led to greatly improved exposition and eliminated some significant mistakes.

\section{Definitions and basic results}

We begin with the usual definition of Morse functions in terms of local models,
as a warm-up to the succeeding definitions, and also add a few slightly less
standard terms to this setting.

\begin{definition} \label{D:MorseModelFcn}
 The {\em standard index $k$ Morse model in dimension $m$} is the function $\mu_k^m(x_1, \ldots, x_m) = -x_1^2 - \ldots -x_k^2 + x_{k+1}^2 + \ldots + x_m^2$. When the ambient dimension $m$ is understood we will write $\mu_k$ instead of $\mu_k^m$. We will also abbreviate $\mu_k(x_1, \ldots, x_m)$ as $\mu_k(\mathbf{x})$.
\end{definition}

\begin{definition} \label{D:Morse}
Given an $m$--manifold $M$ and an oriented $1$--manifold $N$, a smooth function $g : M
\to N$ is {\em locally Morse} if there exist coordinates in a neighborhood of
each critical point $p$ together with coordinates in a neighborhood of $g(p)$
with respect to which $g(x_1, \ldots, x_m) =  \mu_k(\mathbf{x})$, where $k$ is the {\em index} of $p$. A {\em Morse function}
is a proper map $g : M \to N$ which is locally Morse with the additional
property that distinct critical points map to distinct critical values. When $g$
is a Morse function from $M$ to $I=[0,1]$ we imply that $M$ is given as a
cobordism from $F_0$ to $F_1$ and that $g^{-1}(0) = F_0$ and $g^{-1}(1) = F_1$. 
\end{definition}

It is a standard fact that Morse functions are stable and generic. Next
we will discuss homotopies and homotopies of homotopies between Morse
functions, and also make similar statements that homotopies satisfying certain
properties are stable and generic. These facts are only slightly less standard, and are discussed in many different references on singularity theory and Cerf theory. Probably the most comprehensive reference for the facts we mention is~\cite{HatcherWagoner}. To see these results in the more general context of singularity theory, look at~\cite{Wassermann}. For a modern exposition explicitly in a low dimensional setting, which also explains much of the motivation for this paper, we recommend~\cite{LekiliWrinkled}.

We want to discuss homotopies $g_t : M \to N$ between Morse functions which
are not necessarily Morse at intermediate times, in which case it is useful to discuss also the associated function $G : I \times M \to I \times N$ defined by $G(t,p) = (t,g_t(p))$, and its singular locus $Z_G$, the trajectory of the critical points of $g_t$. Given an $m$--manifold $M$ and a $1$--manifold $N$ with two Morse functions $g_0,g_1 : M \to N$, we are interested in homotopies $g_t : M \to N$ satisfying the following properties: The functions $g_t$ should be Morse for all but finitely many values of $t$ and, at those values $t_*$ when $g_{t_*}$ is not Morse exactly one of the following events should occur, possibly with the $t$ parameter reversed (Figure~\ref{F:CerfGraphic} illustrates these by drawing the Cerf graphic $G(Z_G)$ for a typical generic homotopy):
\begin{figure}[ht!]
\labellist
\small\hair 2pt
\pinlabel $3$ [r] at 28 117
\pinlabel $2$ [r] at 28 93
\pinlabel $1$ [r] at 28 69
\pinlabel $1$ [r] at 28 37
\pinlabel $3$ [l] at 195 113
\pinlabel $2$ [l] at 195 97
\pinlabel $1$ [l] at 195 73
\pinlabel $1$ [l] at 195 49
\pinlabel $g_t$ [r] at 6 75
\pinlabel $t$ [t] at 97 12
\pinlabel $2$ [t] at 134 84
\endlabellist
\begin{center}
\includegraphics{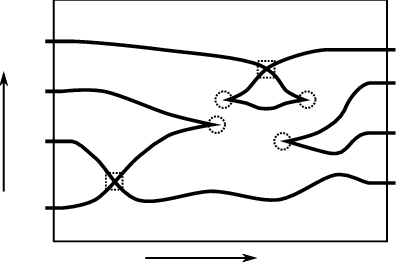}
\caption{Example of the Cerf graphic for a generic homotopy
between Morse functions, indices of critical points labelled with integers,
birth and death cusps indicated by dotted circles and critical value crossings
indicated by dotted squares.}\label{F:CerfGraphic}
\end{center}
\end{figure}
\begin{enumerate}
 \item Two critical values cross at $t_*$: More precisely, $g_{t_*}$ is locally Morse but not Morse, and $Z_G \cap ([t_* - \epsilon, t_* + \epsilon] \times M)$ is a collection of arcs on which $G$ is an embedding except for exactly one transverse double point where the images of two arcs cross. For future reference we call this event a {\em $1$--parameter crossing}, or just a {\em crossing}.
 \item A pair of cancelling critical points are born: For all $t \in [t_*-\epsilon,t_*+\epsilon]$, $g_t$ is Morse outside a ball, and inside that ball there are coordinates on domain and range (possibly varying with $t$) with respect to which $g_t(x_1, \ldots, x_m) = - x_1^2 - \ldots - x_k^2 + x_{k+1}^3 - (t-t_*) x_{k+1} + x_{k+2}^2 + \ldots + x_m^2$, with no other critical values near $0$. Thus for $t \neq t_*$, $g_t$ is Morse, but for $t < t_*$ there are no critical points in this ball, and for $t > t_*$ there are two critical points of index $k$ and $k+1$ in this ball. Note that here $G$ is injective on $Z_G \cap ([t_*-\epsilon,t_*+\epsilon] \times M)$, and $Z_G \cap ([t_*-\epsilon,t_*+\epsilon] \times M)$ is a collection of arcs all but one of which have end points at $t_*-\epsilon$ and $t_* + \epsilon$ and are smoothly embedded via $G$, and one of which has both end points at $t_* + \epsilon$ and is mapped via $G$ to a semicubical cusp in $[t_*-\epsilon,t_*+\epsilon] \times N$. For future reference we 
call this a {\em $1$--parameter birth singularity} (or {\em death singularity} when $t$ is reversed).
\end{enumerate}
It is a standard fact that homotopies satisfying these properties are
stable and generic, so for this reason:

\begin{definition} \label{D:MorseHomotopy}
 We call a homotopy $g_t : M \to N$, with $g_0$ and $g_1$ Morse, a {\em
generic homotopy between Morse functions} if $g_t$ satisfies the properties
listed in the preceding paragraph. 
\end{definition}

We distinguish the above from the following:
\begin{definition} \label{L:ArcOfMorse}
 An {\em arc of Morse functions} is a homotopy $g_t$ which is Morse for all $t$.
\end{definition}

Next we discuss homotopies $g_{s,t} : M \to N$ between generic homotopies
between Morse functions, which are not necessarily generic homotopies for
certain fixed values of $s$. In this case it is useful to consider the
associated functions $G_s : I \times M \to I \times N$ defined by $(t,p)
\mapsto (t,g_{s,t}(p))$ and $\mathcal{G} : I \times I \times M \to I \times I
\times N$ defined by $(s,t,p) \mapsto (s,t,g_{s,t}(p))$, and their singular loci
$Z_{G_s} \subset I \times M$ and $Z_{\mathcal{G}} \subset I \times I \times M$.
Given an $m$--manifold $M$ and a $1$--manifold $N$, with one generic
homotopy $g_{0,t} : M \to N$ between Morse functions $g_{0,0}$ and $g_{0,1}$
and another generic homotopy $g_{1,t} : M \to N$ between Morse functions
$g_{1,0}$ and $g_{1,1}$, we are interested in connecting these through a
$2$--parameter family $g_{s,t} : M \to N$, with $s,t \in I$, satisfying the
following conditions:
\begin{enumerate}
 \item $g_{s,0}$ is an arc of Morse functions from $g_{0,0}$ to $g_{1,0}$ and
$g_{s,1}$ is an arc of Morse functions from $g_{0,1}$ to $g_{1,1}$.
 \item For all but finitely many fixed values of $s$, $g_{s,t}$ is, in the
parameter $t$, a generic homotopy between the Morse functions $g_{s,0}$ and
$g_{s,1}$. 
 \item At those values $s_*$ when $g_{s_*,t}$ is not a generic homotopy
there is a single value $t_*$ such that $g_{s_*,t}$ is a generic homotopy for $t
\in [0,t_*)$ and for $t \in (t_*,1]$. 
 \item At each of these points $(s_*,t_*) \in I \times I$ exactly one of the following events occurs, possibly with either the $s$ or $t$ parameter reversed (some of which are illustrated in figures below by drawing
sequences of Cerf graphics $G_s(Z_{G_s})$): 
\begin{enumerate}
 \item (This event is not particularly important to us but we list it for completeness.) The function $g_{s_*,t_*}$ is locally Morse (or has a birth or death in the parameter $t$) but the $1$--parameter family $g_{s_*,t}$ does not meet the requirements to be a generic homotopy because exactly two of the events listed in Definition~\ref{D:MorseHomotopy} occur simultaneously at $t=t_*$. For example, a birth singularity may happen at the same time $t_*$ as a crossing. This phenomenon should be transverse in the obvious sense; for example, for $s<s_*$, the birth might happen before the crossing, and for $s > s_*$, the birth would then happen after the crossing. We call this event a {\em $2$--parameter coincidence}.
 \item The function $g_{s_*,t_*}$ is locally Morse but the $1$--parameter family
$g_{s_*,t}$ does not meet the requirements to be a generic homotopy
because the singular locus $Z_{G_{s_*}} \cap ([t_*-\epsilon,t_*+\epsilon]
\times M)$ is mapped into $I \times N$ via $G_{s_*}$ with a single
non-transverse quadratic double point at $t=t_*$. However, we require here that
the singular locus $Z_{\mathcal{G}} \cap ([t_*-\epsilon,t_*+\epsilon] \times
[s_*-\epsilon,s_*+\epsilon] \times M)$ is a collection of disjoint squares and
is mapped into $I \times I \times N$ via $\mathcal{G}$ with a single arc of
transverse double points. In other words, the image of $Z_{G_s}$ in $I \times N$
changes via a Reidemeister-II type move at $s=s_*$. See Figure~\ref{F:ReidII}; we call this event a {\em Reidemeister-II fold crossing}.
\begin{figure}[ht!]
\labellist
\small\hair 2pt
\pinlabel $t$ [t] at 45 1
\pinlabel $N$ [r] at 4 41
\pinlabel $s$ [b] at 99 41
\endlabellist
\begin{center}
\includegraphics{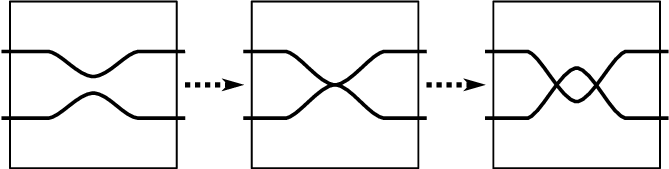}
\caption{Non-transverse double point in the singular locus for a generic
homotopy between generic homotopies between Morse functions. Note that
in general the indices of the two critical points involved can
be arbitrary.}\label{F:ReidII}
\end{center}
\end{figure}
\item The function $g_{s_*,t_*}$ is locally Morse but the $1$--parameter family
$g_{s_*,t}$ does not meet the requirements to be a generic homotopy because the
singular locus $Z_{G_{s_*}} \cap ([t_*-\epsilon,t_*+\epsilon] \times M)$ is
mapped into $I \times N$ via $G_{s_*}$ with a single transverse triple point.
However, we require here that the singular locus $Z_{\mathcal{G}} \cap
([t_*-\epsilon,t_*+\epsilon] \times [s_*-\epsilon,s_*+\epsilon] \times M)$ is a
collection of disjoint squares and is mapped  into $I \times I \times N$ via
$\mathcal{G}$ with three arcs of double points which meet transversely at the
triple point. In other words, the image of $Z_{G_s}$ in $I \times N$ is modified
via a Reidemeister-III type move. See Figure~\ref{F:ReidIII}; we call this event a {\em Reidemeister-III fold crossing}.
\begin{figure}[ht!]
\labellist
\small\hair 2pt
\pinlabel $t$ [t] at 45 1
\pinlabel $N$ [r] at 4 41
\pinlabel $s$ [b] at 103 41
\endlabellist
\begin{center}
\includegraphics{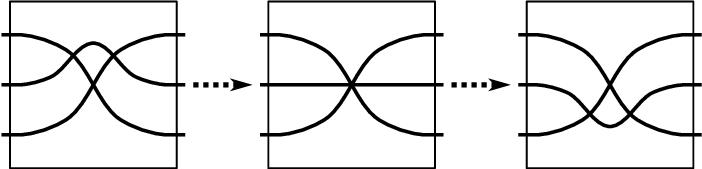}
\caption{Transverse triple point in the singular locus for a generic homotopy
between generic homotopies between Morse functions. Again, the indices
involved can be arbitrary.}\label{F:ReidIII}
\end{center}
\end{figure}
 \item The $1$--parameter family $g_{s_*,t}$ fails to be a generic homotopy
because a birth (or death) occurs at time $t_*$ at a point $p \in M$
at the same value as another Morse critical point $q$; $g_{s_*,t_*}(p) =
g_{s_*,t_*}(q)$. In other words, $G_{s_*}$ maps $Z_{G_{s_*}}$ into $I \times
N$ in such a way that a non-transverse double point occurs between a cusp and a
non-cusp point. However, here we require that the $1$--dimensional cusp locus
$C_{\mathcal{G}} \subset I \times I \times M$ and the $2$--dimensional singular
locus $Z_{\mathcal{G}} \subset I \times I \times M$ are mapped into $I \times I
\times N$ via $\mathcal{G}$ with a transverse intersection at this point. See
Figure~\ref{F:CuspCrossing}; we call this event a {\em cusp-fold crossing}. 
\begin{figure}[ht!]
\labellist
\small\hair 2pt
\pinlabel $t$ [t] at 45 1
\pinlabel $N$ [r] at 4 41
\pinlabel $s$ [b] at 103 41
\endlabellist
\begin{center}
\includegraphics{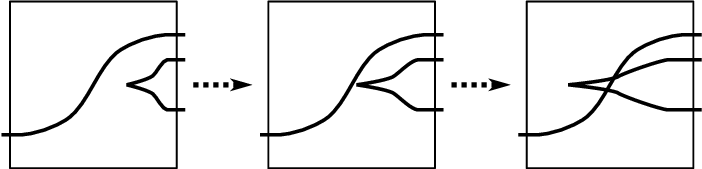}
\caption{Non-transverse double point involving a cusp, occurring in a generic
homotopy between generic homotopies between Morse functions. The only constraint
on indices is that coming from the cusp, namely that the two critical points
born at the cusp are of successive index.}
\label{F:CuspCrossing}
\end{center}
\end{figure}
 \item The function $g_{s_*,t_*}$ is Morse away from a point $p \in M$, and in
neighborhoods of $p$ and $g_{s_*,t_*}(p)$ we have coordinates with respect to
which, for $|s-s_*| < \delta$ and $|t-t_*| < \epsilon$, $g_{s,t}$ is given by
$g_{s,t}(x_1, \ldots, x_m) = -x_1^2 - \ldots -x_k^2 + x_{k+1}^3 + (t-t_*)^2
x_{k+1} - (s-s_*) x_{k+1} + x_{k+2}^2 + \ldots + x_m^2$. Furthermore, for these
$(s,t)$ there are no other singularities of $g_{s,t}$ in the inverse image of a
small neighborhood of $g_{s_*,t_*}(p)$. Geometrically, this is the birth of a
pair of cusps joined in an ``eye'' shape, involving a birth and a death of a
pair of cancelling critical points. See Figure~\ref{F:Eye}; we call this event an {\em eye birth singularity} (or {\em death} when $s$ is reversed).
\begin{figure}[ht!]
\labellist
\small\hair 2pt
\pinlabel $t$ [t] at 42 1
\pinlabel $N$ [r] at 1 41
\pinlabel $s$ [b] at 98 41
\pinlabel {no critical values} at 40 40
\pinlabel $k$ [t] at 273 30
\pinlabel $k+1$ [b] at 273 50
\endlabellist
\begin{center}
\includegraphics{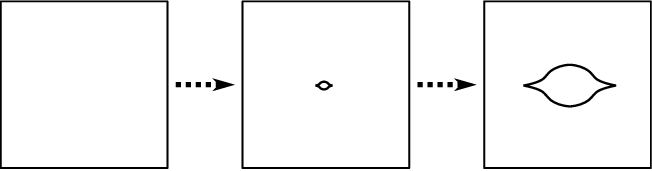}
\caption{The birth of a birth--death pair (eye birth) involving a pair of cancelling
critical points of successive index.}
\label{F:Eye}
\end{center}
\end{figure}
 \item The function $g_{s_*,t_*}$ is Morse away from a point $p \in M$, and in
neighborhoods of $p$ and $g_{s_*,t_*}(p)$ we have coordinates with respect to
which, for $|s-s_*| < \delta$ and $|t-t_*| < \epsilon$, $g_{s,t}$ is given by
$g_{s,t}(x_1, \ldots, x_m) = -x_1^2 - \ldots -x_k^2 + x_{k+1}^3 - (t-t_*)^2
x_{k+1} - (s-s_*) x_{k+1} + x_{k+2}^2 + \ldots + x_m^2$. Furthermore, for these
$(s,t)$ there are no other singularities of $g_{s,t}$ in the inverse image of a
small neighborhood of $g_{s_*,t_*}(p)$. Here a death and a birth of a
cancelling pair merge together, so that afterwards there is no cancellation. 
See Figure~\ref{F:Merge}; we call this event a {\em merge singularity} (or {\em unmerge} when $s$ is reversed).
\begin{figure}[ht!]
\labellist
\small\hair 2pt
\pinlabel $t$ [t] at 45 1
\pinlabel $N$ [r] at 4 41
\pinlabel $s$ [b] at 103 41
\pinlabel $k$ [t] at 292 37
\pinlabel $k+1$ [b] at 292 44
\endlabellist
\begin{center}
\includegraphics{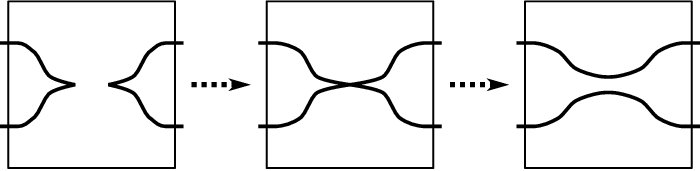}
\caption{The merge of a death--birth pair involving a pair of cancelling
critical points of successive index.}
\label{F:Merge}
\end{center}
\end{figure}
 \item The function $g_{s_*,t_*}$ is Morse away from a point $p \in M$, and in
neighborhoods of $p$ and $g_{s_*,t_*}(p)$ we have coordinates with respect to
which, for $|s-s_*| < \delta$ and $|t-t_*| < \epsilon$, $g_{s,t}$ is given by
$g_{s,t}(x_1, \ldots, x_m) = -x_1^2 - \ldots -x_k^2 + x_{k+1}^4 - (s-s_*)
x_{k+1}^2 + (t-t_*) x_{k+1} + x_{k+2}^2 + \ldots + x_m^2$. Furthermore, for
these $(s,t)$ there are no other singularities of $g_{s,t}$ in the inverse image
of a small neighborhood of $g_{s_*,t_*}(p)$. This singularity is known as a
swallowtail. See Figure~\ref{F:Swallowtail}; we call this event a {\em swallowtail birth singularity} (or {\em death} when $s$ is reversed).
\begin{figure}[ht!]
\labellist
\small\hair 2pt
\pinlabel $t$ [t] at 45 2
\pinlabel $N$ [r] at 4 42
\pinlabel $s$ [b] at 103 42
\pinlabel $k$ [t] at 293 38
\pinlabel $k+1$ [b] at 293 45
\endlabellist
\begin{center}
\scalebox{1}{\includegraphics{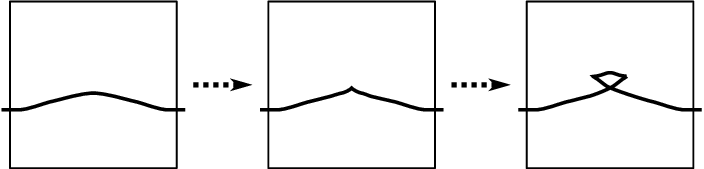}}
\caption{Birth of a swallowtail; an upside down version also occurs.}
\label{F:Swallowtail}
\end{center}
\end{figure}
\end{enumerate}
\end{enumerate}

Note that, besides the coincidence event, we have two types of events: {\em $2$--parameter crossings} (Reidemeister-II's, Reidemeister-III's and cusp-fold's) and {\em $2$--parameter singularities} (eye births and deaths, merges and unmerges, and swallowtail births and deaths).  (As a technical point, note also that, in the definitions of the $2$--parameter singularities, the coordinates in which the homotopy of homotopies takes on the standard models may vary with $s$ and $t$, and also the parametrization of $t$ may depend on $s$.)

It is also standard that such homotopies of homotopies are generic and stable,
and so for this reason:

\begin{definition}
 A homotopy $g_{s,t}$ between generic homotopies $g_{0,t}$ and
$g_{1,t}$ is a {\em generic homotopy of homotopies} if it satisfies the
properties described above. If $g_{0,0}=g_{1,0}$ and
$g_{0,1} = g_{1,1}$, we can also ask that $g_{s,0} = g_{0,0}$ and
$g_{s,1}=g_{0,1}$ for all $s$, in which case we say that $g_{s,t}$ is a
generic homotopy {\em with fixed endpoints}. 
\end{definition}

Again, we distinguish this from the following:
\begin{definition} \label{D:ArcOfHomotopies}
 An {\em arc of generic homotopies} is a homotopy of homotopies $g_{s,t}$ which, for each fixed value of $s$, is a generic homotopy in the parameter $t$.
\end{definition}

\begin{definition}
 Given an $n$--manifold $X$ and a $2$--manifold $\Sigma$, a smooth proper map
$G : X \to \Sigma$ is a {\em Morse $2$--function} if for each $q \in \Sigma$
there is a compact neighborhood $S$ of $q$ with a diffeomorphism $\psi : S \to
I \times I$ and a diffeomorphism $\phi : G^{-1}(S) \to I \times M$, for an
$(n-1)$--manifold $M$, such that $\psi \circ G \circ \phi^{-1} : I \times M \to I
\times I$ is of the form $(t,p) \mapsto (t,g_t(p))$ for some generic homotopy
between Morse functions $g_t : M \to I$. A singular point for $G$ is called a {\em fold point} if the homotopy used to model $G$ at that point can actually be taken to be Morse, and is called a {\em cusp point} if the homotopy has a birth or death at that point. An arc of fold points is called a {\em fold}. When $\Sigma$ is given as a cobordism
between $1$--manifolds $N_0$ and $N_1$ then $X$ should be given as a cobordism
between $(n-1)$--manifolds $M_0$ and $M_1$, with $G^{-1}(N_i)=M_i$ and with
$G|_{M_i} : M_i \to N_i$ a Morse function. When $\Sigma$ is given as a
cobordism between cobordisms (in particular, when $\Sigma = I^2$, a cobordism
from $I$ to $I$, with $I$ being a cobordism from $\{0\}$ to $\{1\}$), then $X$
should also be given as such a relative cobordism, with all the cobordism
structure preserved by $G$. For us the structure of a relative cobordism
includes an explicit product structure on the sides, and this should also be
respected by $G$. In particular, there should be no critical points along the
side of the cobordism. 
\end{definition}

\begin{remark}
The important thing to understand here is that Morse $2$--functions look locally like generic homotopies between Morse functions, but that there is no global time direction. Note that the {\em index} of a fold is not well defined, but that if we choose a transverse direction to the fold, and consider local models $(t,p) \mapsto (t,g_t(p))$ in which the second coordinate in the range is given by this transverse direction, then we do have a well defined index. In figures, we will indicate this by drawing a small arrow transverse to the fold and labelling it with the index. If, however, we are drawing a Cerf graphic, then it is understood that the transverse direction is up, and we will label folds (arcs of critical points) with indices without indicating the arrow. We illustrate these conventions in Figure~\ref{F:Morse2FcnVsCerf}, which show the images of the singular locus for, on the left, a hypothetical Morse $2$--function mapping to a genus $2$ surface and, on the right, a generic homotopy between Morse 
functions.
\begin{figure}[ht!]
\labellist
\small\hair 2pt
\pinlabel $1$ [r] at 21 83 
\pinlabel $1$ [tl] at 39 29
\pinlabel $1$ [t] at 121 20
\pinlabel $2$ [tr] at 160 69
\pinlabel $0$ [r] at 92 44
\pinlabel $3$ [bl] at 90 69
\pinlabel $1$ [b] at 131 74
\pinlabel $1$ [t] at 273 20
\pinlabel $1$ [t] at 231 20
\pinlabel $2$ [b] at 257 78
\pinlabel $3$ [bl] at 221 90
\endlabellist
\begin{center}
\includegraphics{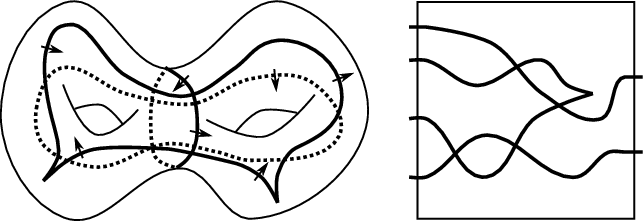}
\caption{Morse $2$--functions versus Cerf graphics, and index labelling conventions. On the left, we are mapping from a $4$--manifold to a genus $2$ surface. On the right, we are illustrating a generic homotopy between $I$--valued Morse functions on a $3$--manifold. }
\label{F:Morse2FcnVsCerf}
\end{center}
\end{figure}
\end{remark}

\begin{definition}
A $1$--parameter family $G_s : X \to \Sigma$ is a {\em generic homotopy
between Morse $2$--functions} if, for each $q \in \Sigma$ and each $s_* \in I$
there is an $\epsilon>0$ and a compact neighborhood $S$ of $q$ with a
diffeomorphism $\psi : S \to I \times I$ and a $1$--parameter family of
diffeomorphisms $\phi_s : G_s^{-1}(S) \to I \times M$, for an
$(n-1)$--manifold $M$ and for $|s-s_*| < \epsilon$, such that $\psi \circ G_s \circ \phi_s^{-1} : I \times M \to I \times I$ is of the form $(t,p) \mapsto (t,g_{s,t}(p))$ for some generic homotopy of homotopies $g_{s,t} : M \to I$. Generic homotopies of Morse $2$--functions $G_s : X \to \Sigma$ are expected to be constant (independent of $s$) on $\partial X$.
\end{definition}

Again, although our terminology is not standard, it is a standard fact that
Morse $2$--functions and generic homotopies of Morse
$2$--functions are stable and generic. This is mostly explained in Section~4 and Appendix~A of~\cite{LekiliWrinkled}.

We will be interested, for most of this paper, in the special case of Morse
$2$--functions mapping to $I^2$, seen as a cobordism from $\{0\} \times I$ to
$\{1\} \times I$. We use coordinates $(t,z)$ on $I^2$, i.e. $t$ is the
horizontal axis. Here it is useful to impose one extra genericity condition:

\begin{definition} \label{D:SquareM2F}
 Suppose $X^n$ is a cobordism from $M_0$ to $M_1$, with each $M_i$ a cobordism
from $F_{i0}$ to $F_{i1}$. A {\em square Morse $2$--function} on $X$ is a Morse
$2$--function $G : X \to I^2$, respecting the cobordism structure, with no critical values in $I \times \{0,1\}$, such that
the projection onto the horizontal axis, $t \circ G : X \to I$, is itself a
Morse function. In particular, there is a parametrization of the sides $G^{-1}(I \times \{0,1\})$ as $I \times (F_{00} \amalg F_{10})$ with respect to which the horizontal projection $t \circ G$ restricts as projection to $I$. Homotopies between square Morse $2$--functions are assumed to maintain this last condition.
\end{definition}

It is not hard to see that, amongst Morse $2$--functions mapping to $I^2$
respecting the cobordism structures on domain and range, square Morse
$2$--functions are generic and stable.

\begin{definition}
 A Morse function is {\em indefinite} if there are no critical points of
minimal or maximal index, i.e. no critical points with the local model $(x_1, \ldots, x_m) \mapsto \pm (x_1^2 + \ldots + x_m^2)$. A generic homotopy, or generic homotopy of homotopies, of Morse functions is {\em indefinite} if it is indefinite at all parameter values at which it is Morse. A Morse $2$--function, resp. generic homotopy of Morse $2$--functions, is {\em indefinite} if it can always be locally modelled, as in the definition, by an indefinite generic homotopy, resp. generic homotopy of homotopies.
\end{definition}

The following definition will be useful when we want to make assertions about the connectedness of fibers:
\begin{definition}
 A Morse function $g : M \to I$ is {\em ordered} if, given two critical points $p, q \in M$ with indices $i,j$, respectively, if $i < j$ then $g(p) < g(q)$. A generic homotopy or generic homotopy of homotopies is ordered if it is ordered at all parameter values at which it is Morse. A Morse function (or homotopy or homotopy of homotopies) is {\em almost ordered} if, whenever $i < j-1$, we have $g(p) < g(q)$. 
\end{definition}
(Note that it is not clear how to generalize this definition to Morse
$2$--functions.) We leave the proof of the following observation to the reader: 

\begin{lemma}
Consider a Morse function $g : M^m \to I$, with $M$ a cobordism from $F_0$ to $F_1$, and with $F_0$ and $F_1$ both connected. If $m \geq 3$ and $g$ is indefinite and ordered then all level sets of $g$ will be connected. If $m \geq 4$ and $g$ is indefinite and almost ordered then the level sets will all be connected.
\end{lemma}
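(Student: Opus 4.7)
My plan is to track how a regular level set $\Sigma_c = g^{-1}(c)$ changes as $c$ crosses a critical value. Standard Morse theory tells us that at an index-$k$ critical value the level set changes by surgery: remove a tubular neighborhood $S^{k-1}\times D^{m-k}$ of the attaching sphere and glue in $D^k\times S^{m-k-1}$. The workhorse will be a connectivity lemma: if $\Sigma_c$ is connected and $1\le k\le m-2$, then the post-surgery level set is still connected. For $k=1$, removing two small balls from a connected manifold of dimension $m-1\ge 2$ keeps it connected, and the glued-in $D^1\times S^{m-2}$ is connected. For $2\le k\le m-2$, the attaching sphere has codimension $m-k\ge 2$, so its complement is connected, and the glued-in $D^k\times S^{m-k-1}$ is connected because $m-k-1\ge 1$. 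The only potentially disconnecting case is $k=m-1$, where the attaching $(m-2)$-sphere is codimension one.

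To sidestep the top-index case I will pass to the dual function $-g$, which under the hypotheses is again indefinite (and respectively ordered or almost-ordered), with each original index-$\ell$ critical point having dual index $m-\ell$. In particular, the original index-$(m-1)$ surgeries seen from the $F_0$ side become dual index-$1$ surgeries from the $F_1$ side, where the connectivity lemma applies.

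In the ordered case ($m\ge 3$), I pick a regular value $t^*$ above all index-$(m-2)$ critical values and below all index-$(m-1)$ ones. Ascending from the connected $F_0$ up to $t^*$ only crosses indices in $\{1,\ldots,m-2\}$, so iterating the connectivity lemma shows every level set up to $t^*$ is connected; dualizing and ascending in $-g$ from $F_1$ down to $t^*$ handles all remaining level sets.

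In the almost-ordered case, original indices $1$ and $m-1$ differ by $m-2$, so the hypothesis puts them in the correct order precisely when $m\ge 4$, and a regular $t^*$ separating them exists. Below $t^*$ only indices in $\{1,\ldots,m-2\}$ occur, and above $t^*$ only indices in $\{2,\ldots,m-1\}$, i.e. dual indices in $\{1,\ldots,m-2\}$; the connectivity lemma then applies in both directions. The condition $m\ge 4$ is exactly what is needed for the interior safe range $\{2,\ldots,m-2\}$ to be nonempty, so that interior-index critical points appearing on either side of $t^*$ can be absorbed by the lemma. The main obstacle throughout is the codimension-one top-index surgery, and duality is what defuses it.
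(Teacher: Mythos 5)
The paper explicitly leaves this lemma's proof to the reader, and your argument is exactly the intended one: track the level set across critical values, observe that an index-$k$ surgery with $1 \le k \le m-2$ preserves connectedness of an $(m-1)$-dimensional level set (codimension $\ge 2$ attaching sphere for $k \ge 2$, and for $k=1$ the removal of two balls from a connected manifold of dimension $\ge 2$), and defuse the one dangerous case $k = m-1$ by running the dual function $1-g$ from the $F_1$ end. The choice of a regular $t^*$ separating the low and high indices, available when $m\ge 3$ in the ordered case and when $m \ge 4$ in the almost-ordered case (since $1 < m-2$ is exactly what forces all index-$1$ critical values below all index-$(m-1)$ critical values), is correctly identified as the crux, so your proof is complete and matches the expected approach.
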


\section{An extended example}

An important example in dimension $4$, first described in Section~8.2 of~\cite{ADK}, concerns Morse $2$--functions over $S^2$ with some fibers being torus fibers. Because $\Diff (S^1 \times S^1)$ is not simply connected, a neighborhood $B^2 \times S^1 \times S^1$ of a torus fiber can be removed and glued back in via a nontrivial loop in $\Diff (S^1 \times S^1)$, i.e. by performing a logarithmic transform on the fiber. (See also~\cite{BaykurSunukjian}.) Thus we may change the $4$--manifold without changing the data of folds, fibers and attaching maps on $S^2$. The example in~\cite{ADK} involves, in particular, an indefinite Morse $2$--function $S^4 \to S^2$ which is homotopically trivial and can be obtained from a definite Morse $2$--function $S^4 \to B^4 \to B^2 \hookrightarrow S^2$ by flipping a circle of $0$--folds to a circle of $1$--folds as illustrated in Figure~\ref{F:ZeroToOneSingularLoci}. In this section, we show in detail how this happens in dimension $3$, in which case the fiber in question is 
$S^0 \times S^1$, we show how the nontrivial loop arises, and explain that the example generalizes to arbitrary dimensions $n \geq 3$, with fiber $S^{n-3} \times S^1$. This example also illustrates the important ideas, used throughout the paper, associated with thinking of a disk $B^2$ in the base as $I \times I$.

We begin with the following simple example of a Morse $2$--function $G : S^1 \times \R^2 \to \R^2$: Using cartesian coordinates on $\R^2$ in $S^1 \times \R^2$ and polar coordinates on the range $\R^2$, $G$ is defined by $G(\theta,x_1,x_2) = (1/2+(x_1^2+x_2^2)/2,\theta)$. The singular set is a single circle of definite folds at $S^1 \times \{(0,0)\}$ and is embedded via $G$ into $\R^2$ as the circle of radius $1/2$. Figure~\ref{F:ZeroToOneStart} illustrates this map by showing the image of the fold locus as a dark circle, with paraboloid fibers over rays emanating from the origin, showing clearly that the total space is $S^1 \times \R^2$. 
\begin{figure}
\labellist
\small\hair 2pt

\endlabellist
\begin{center}
\includegraphics{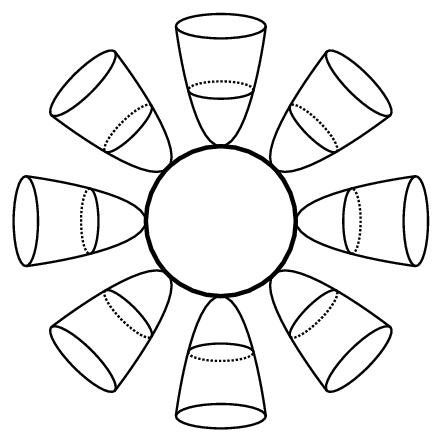}
\caption{A Morse $2$--function $G : S^1 \times \R^2 \to \R^2$ with a single definite fold circle.} 
\label{F:ZeroToOneStart}
\end{center}
\end{figure}

Now let $B$ be the square $[-1,1] \times [-1,1] \subset \R^2$ and let $X = G^{-1}(B)$. Then $X$ is a solid torus seen as a cobordism from $G^{-1}(\{-1\} \times [-1,1])$ to $G^{-1}(\{1\} \times [-1,1])$, both of which are diffeomorphic to $[-1,1] \times S^1$, and $G$ is a Morse $2$--function from $X$ to $B$. The first row in Figure~\ref{F:ZeroToOneA} illustrates ``vertical slices'' of this map, i.e. the inverse images of vertical line segments $\{t\} \times [-1,1]$; the reader should take a moment to reconcile this with Figure~\ref{F:ZeroToOneStart}, which shows the inverse images of rays from the origin.
Figures~\ref{F:ZeroToOneA} and~\ref{F:ZeroToOneB} then illustrate a homotopy $G_s$ beginning with the map $G_0=G$ described above. Each row of surfaces illustrates $G_s$ for a fixed $s$, beginning with $G_0$ in the top row of Figure~\ref{F:ZeroToOneA}. We have chosen six representative values of $s$, hence six rows. (Figure~\ref{F:ZeroToOneZoom} enlarges two regions in Figure~\ref{F:ZeroToOneB} just to illustrate the detail there carefully.) In each row (corresponding to a fixed value of $s$), the surfaces illustrated are each of the form $M_{s,t} = G_s^{-1}(\{t\} \times [-1,1])$, for nine representative values of $t$ ranging from $-1$ to $1$. Each surface $M_{s,t}$ is drawn embedded in $\R^3$; each embedding is such that the function $G_s |_{M_{s,t}} : M_{s,t} \to \{t\} \times [-1,1]$ is the height function, projection to the $z$--axis. Thus the critical locus of each $G_s$ can be seen as the trace of the critical points of each $G_s|{M_{s,t}}$ as $t$ ranges from $-1$ to $1$.

\begin{figure}
\labellist
\small\hair 2pt
\pinlabel $X$ [t] at 180 393
\pinlabel $Y$ [b] at 180 336
\pinlabel $X$ [tl] at 70 187
\pinlabel $Y$ [br] at 45 247
\pinlabel $a$ [r] at 102 200
\pinlabel $b$ [l] at 94 234
\pinlabel $c$ [r] at 265 200
\pinlabel $d$ [l] at 257 234
\endlabellist
\begin{center}
\includegraphics{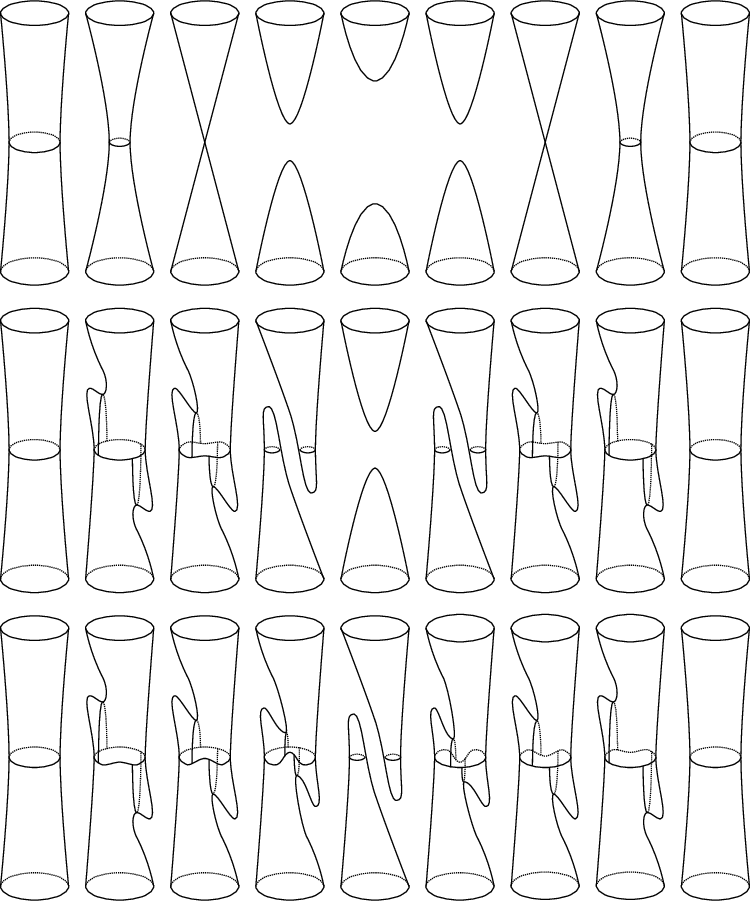}
\caption{The first half of a $1$--parameter family of Morse $2$--functions on $S^1 \times B^2$. Critical points are labelled for coordination with following figures. From the second row to the third row, we have broken the symmetry of the mid-level circle to prepare for the next move, going to the first row of Figure~\ref{F:ZeroToOneB}. Looking at the three middle surfaces in the bottom row, we see, from left to right, a $3$--dimensional $2$--handle attached, surgering the mid-level circle to two circles, followed by a $3$--dimensional $1$--handle which reattaches the two circles. 
}
\label{F:ZeroToOneA}
\end{center}
\end{figure}

\begin{figure}
\labellist
\small\hair 2pt
\endlabellist
\begin{center}
\includegraphics{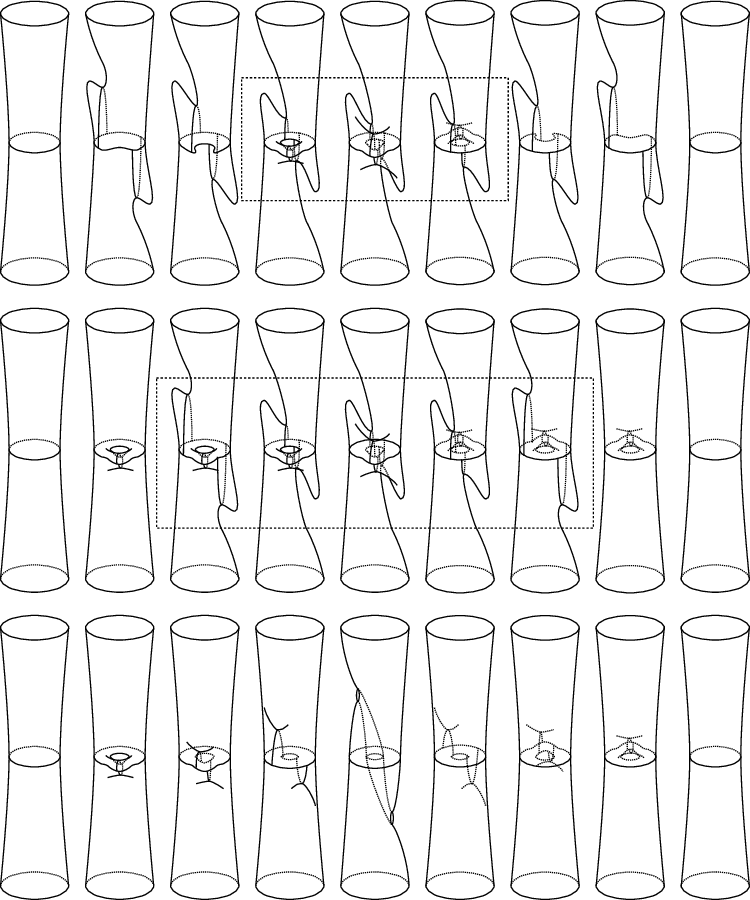}
\caption{The second half of a $1$--parameter family of Morse $2$--functions on $S^1 \times B^2$, with definite folds at the beginning and indefinite folds at the end. The two regions enclosed in boxes are shown enlarged in Figure~\ref{F:ZeroToOneZoom}. The $3$--dimensional $1$--handle mentioned in the caption for Figure~\ref{F:ZeroToOneA} is seen in the first row between the third and fourth surfaces, while the $2$--handle is seen in the first row between the sixth and seventh surface.}
\label{F:ZeroToOneB}
\end{center}
\end{figure}

\begin{figure}
\labellist
\small\hair 2pt
\pinlabel $X$ [tl] at 107 166
\pinlabel $Y$ [br] at 31 278
\pinlabel $a$ [l] at 79 201
\pinlabel $b$ [l] at 64 251
\pinlabel $c$ [tr] at 70 201
\pinlabel $d$ [b] at 67 218
\pinlabel $Y$ [br] at 17 134
\pinlabel $b$ [l] at 29 130
\pinlabel $d$ [b] at 42 72
\pinlabel $X$ [tl] at 70 18
\pinlabel $a$ [r] at 57 25
\pinlabel $c$ [t] at 43 59
\endlabellist
\begin{center}
\includegraphics{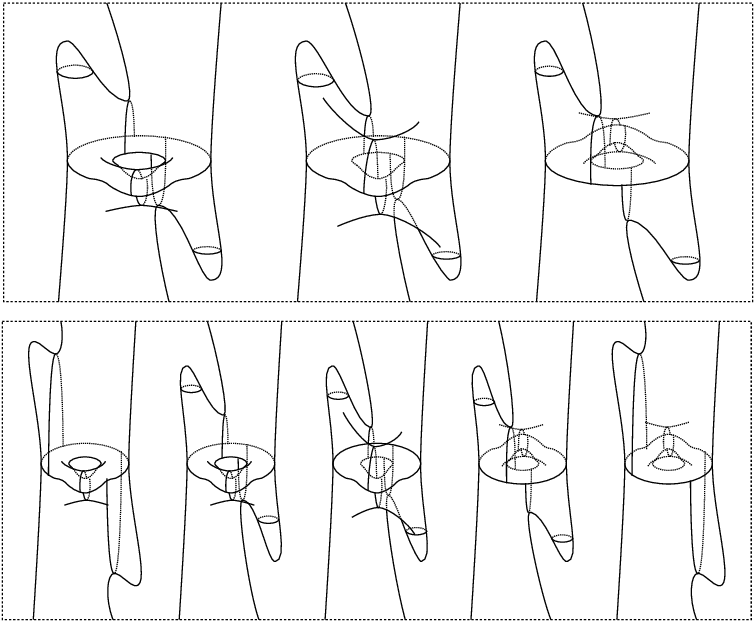}
\caption{Zooming in on two regions of Figure~\ref{F:ZeroToOneB}. Critical points are labelled corresponding to labels in Figure~\ref{F:ZeroToOneA} and Figure~\ref{F:ZeroToOneSingularLoci}. An important point to note here is that, in the second row, going from the first surface to the second, we see two handle slides: $b$ slides over $d$ and $a$ slides under $c$. And, of course, we have the symmetric slides at the other end of that row.}
\label{F:ZeroToOneZoom}
\end{center}
\end{figure}

We can extract from these figures two related sequences of diagrams. The first, Figure~\ref{F:ZeroToOneSingularLoci}, illustrates the images of the singular loci in the base $[-1,1] \times [-1,1]$ for $G_s$ for each of the six values of $s$ in Figures~\ref{F:ZeroToOneA} and ~\ref{F:ZeroToOneB}. The second, Figure~\ref{F:ZeroToOneHandles}, indicates how the respective ascending and descending manifolds of the critical points, for the vertical height functions $G_s|_{M_{s,t}}$, intersect the middle-level $1$--manifold in each surface $M_{s,t}$ from Figures~\ref{F:ZeroToOneA} and~\ref{F:ZeroToOneB}. Here we have only drawn the diagrams for the final five values of $s$ and for the middle five values of $t$.

\begin{figure}
\labellist
\small\hair 2pt
\pinlabel $Y$ [b] at 81 280
\pinlabel $X$ [t] at 81 315
\pinlabel $Y$ [b] at 109 194
\pinlabel $X$ [t] at 109 184
\pinlabel $a$ [b] at 38 233
\pinlabel $b$ [t] at 40 254
\pinlabel $c$ [b] at 169 181
\pinlabel $d$ [t] at 175 200
\pinlabel $a=c$ [b] at 108 12
\pinlabel $b=d$ [t] at 108 43
\endlabellist
\begin{center}
\includegraphics{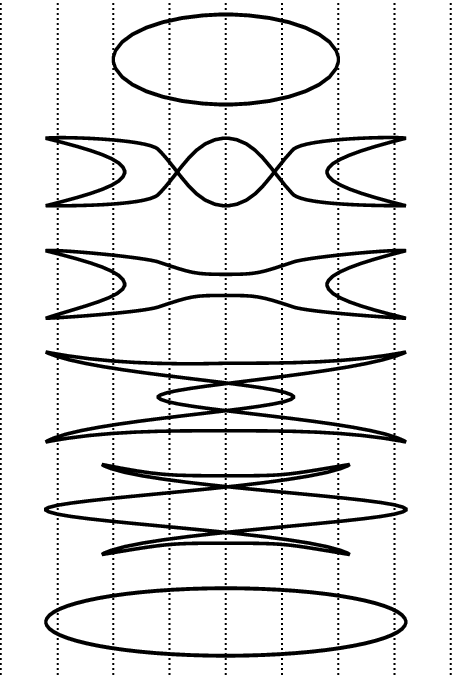}
\caption{The singular loci of $G_s$ for each of the six values of $s$ corresponding to the rows of Figures~\ref{F:ZeroToOneA} and~\ref{F:ZeroToOneB}. The nine values of $t$ corresponding to the columns are indicated by vertical dotted lines. The correspondence with the critical points in Figures~\ref{F:ZeroToOneA} and~\ref{F:ZeroToOneB} is indicated by the letter labels. Capital letters $X$ and $Y$ indicate definite folds ($2$--dimensional $0$-- and $2$--handles) while lower-case letters $a$, $b$, $c$ and $d$ indicate indefinite folds ($2$--dimensional $1$--handles). Note that by the end of the homotopy, $a$ and $c$ have become the same fold and $b$ and $d$ have become the same fold; this arises due to the cancellation of the two ``swallowtails'' at the top and bottom of the preceding singular locus.}
\label{F:ZeroToOneSingularLoci}
\end{center}
\end{figure}

\begin{figure}
\labellist
\small\hair 2pt
\pinlabel $b$ [tr] at 31 335
\pinlabel $b$ [br] at 31 313
\pinlabel $a$ [tl] at 41 335
\pinlabel $a$ [bl] at 41 313
\pinlabel $Y$ [r] at 11 325
\pinlabel $X$ [l] at 62 325
\pinlabel $d$ [tr] at 319 335
\pinlabel $d$ [br] at 319 313
\pinlabel $c$ [tl] at 329 335
\pinlabel $c$ [bl] at 329 313
\pinlabel $Y$ [r] at 298 325
\pinlabel $X$ [l] at 349 325
\pinlabel $Y$ [b] at 90 334
\pinlabel $X$ [b] at 126 334
\pinlabel $Y$ [b] at 233 334
\pinlabel $X$ [b] at 269 334
\pinlabel $b$ [r] at 25 180
\pinlabel $a$ [l] at 46 180
\pinlabel $d$ [r] at 106 167
\pinlabel $c$ [l] at 111 167
\pinlabel $b$ [r] at 103 187
\pinlabel $a$ [l] at 114 187
\pinlabel $d$ [r] at 311 177
\pinlabel $c$ [l] at 334 177
\pinlabel $d$ [r] at 32 97
\pinlabel $c$ [l] at 40 97
\pinlabel $b$ [l] at 20 112
\pinlabel $a$ [r] at 54 112
\pinlabel $Y$ [r] at 14 111
\pinlabel $X$ [l] at 56 111
\endlabellist
\begin{center}
\includegraphics{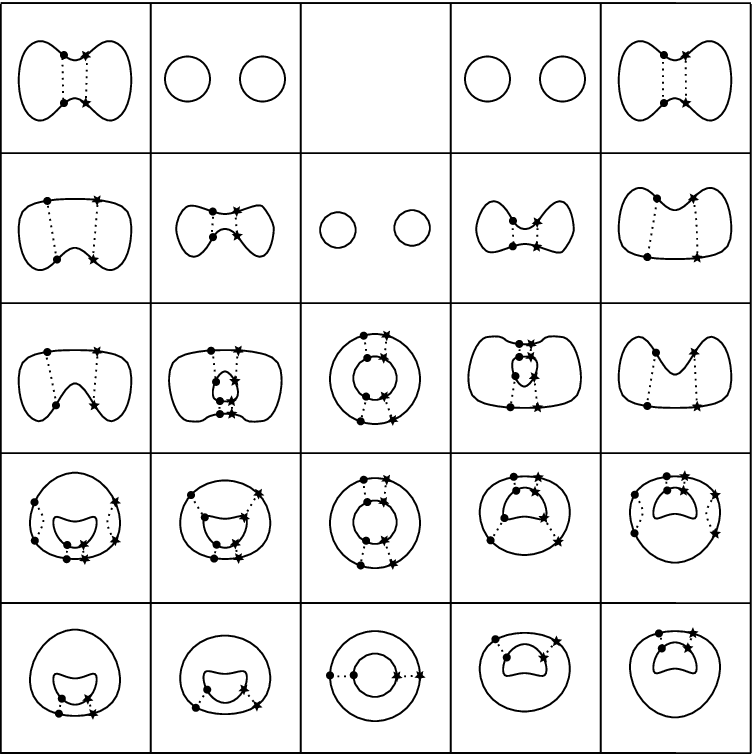}
\caption{Handle attachment data for a $5 \times 5$ block from Figures~\ref{F:ZeroToOneA} and~\ref{F:ZeroToOneB}. In each case we have drawn the middle-level $1$--manifold for the surface, and shown where the ascending and descending manifolds of the index $1$ critical points (for the vertical height function) intersect this $1$--manifold. The circles indicate descending manifolds and the stars indicate ascending manifolds. We have also indicated the ascending manifold for the $0$--handle $X$ and the descending manifold for the $2$--handle $Y$. Note again the handle slides in the fourth row. By the last row, the $0$-- and $2$--handles have been cancelled, the $1$--handles $b$ and $d$ can no longer be distinguished, and the $1$--handles $a$ and $c$ can no longer be distinguished.}
\label{F:ZeroToOneHandles}
\end{center}
\end{figure}

There are several key features to note here:
\begin{enumerate}
 \item For each Morse $2$--function $G_s$, we can consider the function $t \circ G_s$, where $t : \R^2 \to \R$ is projection to the horizontal axis. In our examples, this ``horizontal'' function is in fact (locally) an ordinary Morse function, which we call the ``horizontal Morse function'' associated to $G_s$. The critical points of $t \circ G_s$ occur at precisely the vertical tangencies of the singular loci, as illustrated in Figure~\ref{F:ZeroToOneSingularLoci}. These critical points should not be confused with the ($2$--dimensional) critical points of the vertical Morse function $z \circ G_s|_{M_{s,t}}$ on each surface $M_{s,t}$ in Figures~\ref{F:ZeroToOneA} and~\ref{F:ZeroToOneB}. Looking at how $t \circ G_s$ varies with $s$, in the beginning, we have a ($3$--dimensional) critical point of index $2$ on the left and a ($3$--dimensional) critical point of index $1$ on the right. By the fourth row, the index $1$ critical point has moved to the left and the index $2$ critical point to the right.
 \item The first Morse $2$--function $G_0$ has only definite folds, the intermediate functions have both definite and indefinite folds, and the final function $G_1$ has only indefinite folds.
 \item At $G_1$, the fiber over points inside the circle of indefinite folds is $S^0 \times S^1$ and, considering rays going out from the center point of the circle, we see, over each such ray, a $2$--dimensional $1$--handle attached along $S^0 \times \{p\}$, where the point $p$ moves once around the $S^1$ as the ray rotates once around the center point.
 \item This entire example generalizes to a generic homotopy $G_s : X^n = S^1 \times B^{n-1} \to B^2$. The starting point $G_0$ is easy to describe in coordinates, exactly as we have done here for the case $n=3$. The final map $G_1$ is harder to see in higher dimensions but this example makes it clear that, in the end, we get a circle of indefinite folds with the fiber over points inside the circle being $S^{n-3} \times S^1$. Furthermore, over rays going out from the center we see $(n-1)$--dimensional $(n-2)$--handles attached along $S^{n-3} \times \{p\}$, where $p$ rotates once around $S^1$ as the ray rotates once around the center point. When $n=4$, this example was already seen in~\cite{ADK} in their description of a broken fibration of $S^4$ over $S^2$. This example also highlights a subtlety involved in reading off information about the total space of a Morse $2$--function from data on the base; this subtlety is discussed in more detail in~\cite{GayKirbyPNAS}.
 \item Such an example can be placed anywhere in a Morse $2$--function by adding a cancelling $0$--$1$ round handle pair along any loop in $X^n$ which maps to an embedded circle bounding a disk in the base, so that the image of the $0$--fold ends up on the inside of the $1$-fold. After that, the round $0$--handle can be traded for a round $(n-2)$--handle as we have seen here. Adding this cancelling $0$--$1$ round handle pair and then trading the round $0$--handle is a homotopy that starts and ends without definite folds but which passes through definite folds during the homotopy. A worthwhile example for the reader to consider, when following the proofs in this paper, is how to carry out this homotopy without definite folds. (The authors have not done this.)
\end{enumerate}

\section{Theorems about $I$--valued Morse functions on cobordisms}

In this section we will prove Theorems~\ref{T:1Existence} and~\ref{T:1Uniqueness} in the case where the base is the interval $I$, and we will prove Theorems~\ref{T:Existence} and~\ref{T:Uniqueness} in the case where the total space is $X^n = I \times M^{n-1}$, the base is $I \times I$, and the Morse $2$--functions are of the form $G(t,p) = (t,g_t(p))$. In preparation for general Morse $2$--functions over $I \times I$ (see Section~5), we will need versions of Theorem~\ref{T:1Existence} and~\ref{T:1Uniqueness} in which attaching maps for $n$--dimensional handles lie in level sets of $I$--valued Morse functions on $(n-1)$--manifolds.

Throughout this section, we are given the following data:
\begin{enumerate}
 \item A connected $m$--dimensional cobordism $M$ from $F_0 \neq \emptyset$ to $F_1 \neq \emptyset$, where $F_0$ and $F_1$ are compact $(m-1)$--manifolds, possibly with boundary. We also assume $m \geq 2$ to avoid very-low-dimensional confusion.
 \item A collection $L_1, \ldots, L_p$ (possibly empty) of closed manifolds with $\dim(L_i)=l_i < m/2$ and with mutually disjoint embeddings $\phi_i : [-\epsilon,\epsilon] \times B^{m-1-l_i} \times L_i \hookrightarrow (M \setminus \partial M)$, for some small $\epsilon > 0$. Note that if $l_i < m/2$ then $l_i < m-1$. Assume the order is such that $l_1 \leq \ldots \leq l_p$.
 \item A collection of values $z_1 < \ldots < z_p \in (0,1)$.
\end{enumerate}
In the results that follow we will say that a Morse function $g : M \to I$ {\em is standard with respect to $\phi_i$ at height $z_i$} if $g \circ \phi_i : [-\epsilon,\epsilon] \times B^{m-1-l_i} \times L_i \to I$ is of the form $(z,x,p) \mapsto z + z_i$ on some neighborhood of $\{0\} \times \{0\} \times L_i$. 

The most illuminating example to bear in mind is when $m=3$ and each $l_i=1$, and we think of $L_1 \cup \ldots \cup L_p$ as a link in $M$ and of each embedding $\phi_i$ as given by a framing of $L_i$. Then we are interested in Morse functions with $L_i$ in the level set at level $z_i$, with framing coming from a framing in this level set. These can be constructed by starting with a given Morse function, projecting $L_i$ into a level set, and then resolving crossings by stabilizing to increase the genus. More generally the $\phi_i$'s are going to be attaching maps for handles, and we will see in the next section the importance of attaching handles along spheres lying in level sets of a Morse function, with tubular neighborhoods interacting well with the Morse function. 

\begin{remark}
 In the proofs of the following theorems, we will use generic gradient-like vector fields, and in particular the ascending and descending manifolds of critical points, as a tool to organize local modifications of Morse functions, such as cancellation of critical points. Recall that a gradient-like vector field is a vector field which is transverse to level sets and such that, near each critical point, there are local coordinates with respect to which the Morse function takes the usual form $-x_1^2 - \ldots -x_k^2 + x_{k+1}^2 + \ldots + x_m^2 = \mu_k^m(\mathbf{x})$ and the vector field is the usual Euclidean gradient of this function. For a fixed Morse function $g$, a generic gradient-like vector field is one for which the ascending and descending manifolds meet transversely in intermediate level sets. For a generic homotopy $g_t$ between Morse functions, a generic $1$--parameter family of gradient-like vector fields $V_t$ is one for which the $1$--parameter families of ascending and descending manifolds 
intersected with intermediate level sets are transverse in the $1$--parameter sense, and for a generic homotopy $g_{s,t}$ between generic homotopies we have the natural notion of a generic $2$--parameter family of gradient-like vector fields $V_{s,t}$. (We should also require that, at the non-Morse singularities, the vector field is the usual Euclidean gradient for the standard model of the singularity in local coordinates.) It is clear from the fact that the  transversality properties of the ascending and descending manifolds are generic that the associated ``genericity'' properties of the vector fields are actually generic.
\end{remark}

\begin{theorem} \label{T:MorseExistence}
 There exists an indefinite ordered Morse function $g : M \to I$, with critical values not in $\{z_1, \ldots, z_p\}$, which is standard with respect to each of $\phi_1, \ldots, \phi_p$, at heights $z_1, \ldots, z_p$ respectively. Furthermore, the indices of the critical values and the dimensions $l_i$ of the submanifolds $L_i$ are such that all critical values of index $\leq l_i$ are below $z_i$ while all critical values of index $> l_i$ are above $z_i$.
\end{theorem}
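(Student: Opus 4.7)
My plan is to first produce an indefinite, ordered Morse function $g:M\to I$ by standard Morse theory---since $F_0,F_1\ne\emptyset$, we may trade or cancel all $0$-- and $m$--handles (via cancelling with an introduced $1$-- or $(m-1)$--handle), and handle slides reorder critical points by index. Post-composing $g$ with a diffeomorphism of $I$, I then place, for every $i$, all critical values of index $\le l_i$ below $z_i$ and all of index $>l_i$ above $z_i$; this is consistent precisely because the monotonicity $l_1\le\cdots\le l_p$ matches the ordering $z_1<\cdots<z_p$.

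I then induct on $i$, assuming at step $i$ that $g$ is indefinite, ordered, respects the index-height constraint, and is already standard with respect to $\phi_1,\ldots,\phi_{i-1}$ at $z_1,\ldots,z_{i-1}$. The index-height constraint makes the slab between the topmost critical value of index $l_i$ and the bottommost of index $l_i+1$ a product $F\times J$ on which $g$ is projection to $J\ni z_i$. Using a gradient-like vector field for $g$, I flow $L_i$ into this slab and then project $L_i$ generically onto $F$. Because $l_i<m/2$ forces $2l_i\le m-1$, the projection is either an embedding (when $2l_i<m-1$, in which case only a local modification of $g$ will be needed) or an immersion with isolated transverse double points (when $2l_i=m-1$).

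In the crossings case I resolve each double point by the move sketched in the introduction: introduce a cancelling $l_i$--$(l_i+1)$ pair into $g$, with the new $l_i$--critical value just below $z_{j^*}$ (where $j^*=\min\{j:l_j\ge l_i\}$) and the new $(l_i+1)$--critical value just above $z_{k^*-1}$ (where $k^*=\min\{k:l_k>l_i\}$). These slots are non-empty and jointly sandwich $z_i$, so the co-core of the new $l_i$--handle lies in the modified level set at $z_i$ and provides a tube through which one branch of $L_i$ at the crossing is rerouted. A check against each $z_j$ confirms that ordering, indefiniteness (since $1\le l_i<l_i+1\le m-1$ whenever $l_i\ge 1$, and no crossings occur when $l_i=0$), and the index-height constraint all survive. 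Once all double points are resolved the projection of $L_i$ to $F$ is an embedding, and a local modification of $g$ in the image of $\phi_i$, valid because $g$ is already close to $z+z_i$ there, installs the exact form $g\circ\phi_i(z,x,p)=z+z_i$ without creating any new critical points.

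The main technical obstacle will be arranging that each new cancelling pair does not spoil the standard forms already achieved at $z_1,\ldots,z_{i-1}$. Each new cancelling pair modifies every intermediate level set by surgery along the attaching $(l_i-1)$--sphere, including the level sets at any $z_j$ with $j<i$ lying between the two new critical values. The resolution is that the attaching sphere of the new $l_i$--handle can be isotoped freely within its level set; after shrinking the $\epsilon$'s in the data if necessary, each previously standardized neighborhood $\phi_j(\{0\}\times B^{m-1-l_j}\times L_j)$ is a proper codimension-zero submanifold of its level set with non-empty complement, so the attaching data can be placed disjoint from all of them. Iterating until $i=p$ then yields the desired Morse function $g$.
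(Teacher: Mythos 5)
Your proposal takes a genuinely different route from the paper, and unfortunately the route you chose imports substantial complications that the paper's proof sidesteps entirely; some of the steps do not obviously work as stated.

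The paper's proof is in the opposite logical order from yours: it first \emph{defines} $g$ on the tubular neighborhoods $\phi_i([-\epsilon,\epsilon]\times B^{m-1-l_i}\times L_i)$ to literally be $z+z_i$ (so the standardness condition holds by fiat), then extends to a Morse function on all of $M$, and finally cancels definite critical points and reorders the remaining ones, checking with a dimension count that the descending/ascending manifolds used in these modifications can generically be chosen to miss each $\phi_i(L_i)$ -- precisely because $l_i<m/2$. No projection into a fiber and no crossing resolution is needed anywhere in the argument, and in particular the ``project the attaching sphere and add handles to resolve double points'' idea from the introduction plays no role here (it is motivation for the proof of Theorem~\ref{T:SquareExistence}, where the attaching sphere is chosen, not given, and where adding cusps is natural).

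Your approach starts from an arbitrary indefinite ordered $g$ and then tries to make $g$ standard with respect to the fixed, given embeddings $\phi_i$. This forces you to move $\phi_i(L_i)$ into a level set, which only makes sense if reinterpreted as modifying $g$ (e.g.\ composing with an ambient isotopy); the ``flow into the slab'' part of that can be so reinterpreted, but ``project $L_i$ generically onto $F$'' cannot -- projection is not a diffeomorphism, and an immersed image is not an isotoped copy of $\phi_i(L_i)$. What you actually want is an ambient isotopy of the slab carrying $\phi_i(L_i)$ into a single level $F\times\{z_i\}$, and the obstruction to that is not the same thing as a double point of a generic projection. Moreover, even when $\phi_i(L_i)$ does sit in a level set, the required normal form is a $1$--jet condition along $\phi_i(L_i)$ (the $[-\epsilon,\epsilon]$ direction of $\phi_i$ must become the $g$--gradient direction, the $B^{m-1-l_i}$ direction tangent to level sets), which you wave away with ``a local modification of $g$'' without justification. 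Your crossing--resolution step inserts a cancelling $l_i$--$(l_i+1)$ pair and reroutes one branch through the new handle, but this is not spelled out, and the claim that such a pair ``provides a tube'' through the co-core is not made precise. Finally, your inductive preservation of the previously-arranged $\phi_j$'s only addresses the placement of the new attaching sphere; it does not address the fact that the ambient isotopy you use to move $\phi_i(L_i)$ into a level set sweeps out an $(l_i+1)$--dimensional trace which, when $m$ is odd and $l_i=l_j=(m-1)/2$, generically hits $\phi_j(L_j)$. All of these difficulties evaporate once you build $g$ from the $\phi_i$'s in the first place, as the paper does.
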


In the following proof, we will make essential use of the standard lemma that, for a Morse function $g$ with a generic gradient-like vector field, if there is a single gradient flow line from a critical point $q$ of index $k+1$ down to a critical point $p$ of index $k$, then the two critical points can be cancelled.  More precisely, there exists a generic homotopy $g_t$, with $g_0 = g$, with exactly one death singularity at $g_{1/2}$ involving $p$ and $q$, and no other birth or death singularities. Also note that no other critical values need to move if there is a regular level set between $p$ and $q$ such that the descending manifold for $q$ and the ascending manifold for $p$ avoid all other critical points on their way to this level set. If this is not the case then other critical points may need to move ``out of the way'' to facilitate the crossing. See Figure~\ref{F:CriticalCancellation} for an illustration of these ideas. 
\begin{figure}[ht!]
\labellist
\small\hair 2pt
\pinlabel $q$ [b] at 20 97 
\pinlabel $q$ [b] at 175 97 
\pinlabel $p$ [t] at 39 74
\pinlabel $p$ [t] at 194 74
\pinlabel $a$ [t] at 164 91
\pinlabel $b$ [b] at 201 83
\pinlabel $q$ [r] at 8 40
\pinlabel $p$ [r] at 8 15
\pinlabel $q$ [r] at 161 40
\pinlabel $p$ [r] at 161 15
\pinlabel $a$ [r] at 161 33
\pinlabel $b$ [r] at 161 26
\pinlabel $a$ [t] at 235 68
\pinlabel $b$ [b] at 272 83
\endlabellist
\begin{center}
\includegraphics{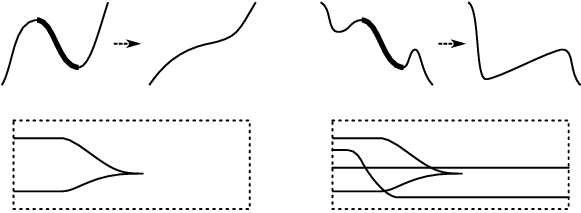}
\caption{Cancellation of critical points $q$ and $p$ in one dimension, with accompanying Cerf graphics. In the first example, no other critical points need to move, but in the second example the critical point $a$ needs to drop below $p$ before $q$ and $p$ can cancel (or raise $b$ above $q$).}
\label{F:CriticalCancellation}
\end{center}
\end{figure}

\begin{definition}
 Given a Morse function $g : M \to I$ with a gradient-like vector field and critical points $q$ of index $k+1$ and $p$ of index $k$, with $g(q) > g(p)$, we say that $q$ {\em cancels} $p$  if there is a unique gradient flow line from $q$ to $p$. 
\end{definition}

Then we can summarize the standard cancellation lemma (without proof) as follows:

\begin{lemma} \label{L:MorseCancellation}
 Given $g : M \to I$ with a generic gradient-like vector field, if critical point $q$ cancels critical point $p$, let $C$ be the 
closure of the descending manifold for $q$ inside $g^{-1}[g(p),g(q)]$.
Note that $p \in C$. Then there is a generic homotopy $g_t$ between Morse functions, with $g_0 = g$, which is independent of $t$ outside an arbitrarily small neighborhood of $C$, passes through exactly one death singularity at $g_{1/2}$ in which $q$ and $p$ cancel, and has no other birth or death singularities. If 
$C$ is actually just the descending disk, then no other critical values will change; otherwise all the critical points in $C$ besides $q$ and $p$ will have to move below $p$ before the death occurs.
\end{lemma}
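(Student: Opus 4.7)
The plan is to prove the cancellation in two stages: first reduce to the case in which $C$ contains no critical points of $g$ other than $p$, and then apply the classical Morse cancellation lemma inside an arbitrarily thin tubular neighborhood of the unique flow line $T$ from $q$ to $p$.

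\emph{Baseline cancellation.} Suppose first that $C = \closure(W^d(q)) \cap g^{-1}[g(p), g(q)]$ contains only the $(k+1)$--disk that is the descending manifold of $q$, capped off at $p$ by $T$. Then the classical cancellation lemma applies directly: perturb the generic gradient-like field $V$ in an arbitrarily small tubular neighborhood $N(T)$ to destroy the unique $q$-to-$p$ trajectory, and then interpolate between $g$ and an explicit critical-point-free standard model inside $N(T)$. This yields a homotopy $g_t$ supported in $N(T) \subset N(C)$, passing through exactly one death singularity at $t=1/2$, with no other births or deaths. Critical values outside $N(T)$ are untouched, which gives the second assertion of the lemma.

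\emph{Reduction to the baseline.} In general, for each $r \in C$ with $r \neq p, q$, the fact that $r$ is a limit of gradient flow from $q$ forces $W^a(r) \cap W^d(q) \neq \emptyset$ near $r$, so the generic transversality of $V$ in $M$ yields $(m - \mathrm{ind}(r)) + (k+1) \geq m$, i.e., $\mathrm{ind}(r) \leq k+1$. I would lower each such $g(r)$ below $g(p)$ by a local modification: choose a bump function $\phi_r$ with arbitrarily small support around $r$, disjoint from $\{p, q\}$ and from $T$, with $\phi_r \equiv 1$ near $r$, and follow the arc $g_s = g - s\phi_r$ for $s \in [0, \sigma_r]$ with $\sigma_r > g(r) - g(p)$. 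Then $r$ persists as a critical point with $g_s(r) = g(r) - s$, no new critical points are created (provided $|\nabla \phi_r|$ is chosen small relative to $|\nabla g|$ on the transition annulus $\{0 < \phi_r < 1\}$), and the only non-Morse events along the arc are transverse crossings in the Cerf graphic as $g_s(r)$ sweeps past the critical values of other critical points. Performing these lowerings sequentially for all the finitely many $r \in C \setminus \{p, q\}$ (say, in decreasing order of $g(r)$) brings us into the baseline configuration, after which Stage~1 executes the cancellation.

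Concatenating the two stages gives a generic homotopy $g_t$ between Morse functions with exactly one death, no births, and finitely many transverse crossings, supported in an arbitrarily small neighborhood of $C$ (each bump lives in $C$, and Stage~1 is supported near $T \subset C$). The main obstacle will be keeping the rearrangement support strictly inside a prescribed neighborhood of $C$ while ensuring that no spurious births or deaths appear during the lowerings; this is a standard but somewhat delicate bump-function construction, amounting to choosing $\phi_r$ with both small support and small gradient relative to $V$ on the annulus where $\phi_r$ transitions from $0$ to $1$.
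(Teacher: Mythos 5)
The paper itself does not prove Lemma~4.3 --- immediately before stating it, the text says ``we can summarize the standard cancellation lemma (without proof) as follows'' --- so there is no proof to compare against; you are essentially being asked to reconstruct the standard argument subject to the extra support condition. Your Stage~1 (the baseline where $C$ is just the closed descending disk) is the classical first cancellation theorem \`a la Milnor/Cerf and is fine.

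Stage~2 has a genuine gap. To lower $g(r)$ by an amount $\sigma_r > g(r)-g(p)$ using $g_s = g - s\phi_r$ while creating no new critical points, you need $|\nabla g| > \sigma_r|\nabla\phi_r|$ on the transition annulus. If $\phi_r$ has ``arbitrarily small support around $r$'', say in a ball of radius $\epsilon$ with $\phi_r\equiv 1$ on the ball of radius $\epsilon/2$, then on the transition annulus $|\nabla\phi_r|\gtrsim 1/\epsilon$ while, since $r$ is a Morse critical point, $|\nabla g|\sim\epsilon$ there; so the inequality forces $\epsilon^2 \gtrsim \sigma_r$, which is a fixed positive quantity. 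You cannot simultaneously have small support around $r$ and small gradient; the caveat at the end of your proof describes an impossible trade-off, not a delicate one. The correct fix is the standard rearrangement lemma: push $r$ down along a tubular neighborhood of its \emph{descending disk} $D_r = W^d(r)\cap g^{-1}[\,g(p)-\delta,\,g(r)\,]$, so the modification is spread over the whole tube rather than concentrated near $r$. This is still consistent with the support requirement because $W^d(r)\subset\overline{W^d(q)}$ (by the $\lambda$-lemma, since $r\in\overline{W^d(q)}$), hence $D_r\subset C$ up to a $\delta$-collar, and a thin tubular neighborhood of $D_r$ lies in an arbitrarily small neighborhood of $C$. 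You should also check that the modification keeps $T$ and its uniqueness intact (easy, since $D_r$ is disjoint from $T$), and then your baseline step applies. As a side remark, your dimension count gives $\mathrm{ind}(r)\le k$, not $k+1$: the transverse intersection $W^a(r)\cap W^d(q)$ is a union of flow lines, hence at least $1$-dimensional, giving $(m-\mathrm{ind}(r))+(k+1)-m\ge 1$.
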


\begin{proof}[Proof of Theorem~\ref{T:MorseExistence}]
 First define $g : M \to I$ on the submanifolds $\phi_i([-\epsilon,\epsilon] \times B^{m-1-l_i} \times L_i)$ by $\phi_i^{-1}$ followed by projection to $[-\epsilon,\epsilon]$ followed by translation by $z_i$. This places $\phi_i(L_i)$ at height $z_i$ as desired.
 
 There is no obstruction to extending this map to a Morse function $g : M \to I$. (Issues of smoothness at the boundary of $\phi_i([-\epsilon,\epsilon] \times B^{m-1-l_i} \times L_i)$ are easily avoided either by use of tubular neighborhood theorems or by slightly shrinking the image of $\phi_i$.) To make $g$ indefinite and ordered, first choose a generic gradient-like vector field, so that we can construct arguments using gradient flow lines and ascending and descending manifolds. Indefiniteness is easily achieved because each critical point of index $0$ (or $m$) must be cancelled by a critical point of index $1$ (or $m-1$), since $M$ is connected. (If, for an index $0$ critical point $p$, there was no such cancelling index $1$ critical point, then there could not be a path from $p$ to $F_0$, and $M$ would not be connected.) The 
 set $C$ for an index $1$ critical point is $1$--dimensional, and will thus miss a neighborhood of the $\phi_i(L_i)$'s by genericity of the gradient-like vector field. Here we use the fact that $\dim(L_i)=l_i < m/2 \leq m-1$, so that each $\phi_i(L_i)$ has positive codimension in the level set $g^{-1}(z_i)$. Thus we can cancel all the index $0$ and $m$ critical points without modifying $g$ near the $\phi_i(L_i)$'s.
 
 To arrange that $g$ is ordered, suppose that $p$ and $q$ are critical points of index $j$ and $k$, respectively, with $g(p) < g(q)$, and with no critical values in $(g(p),g(q))$. If the descending manifold $D_q$ for $q$ and the ascending manifold $A_p$ for $p$ are disjoint then there is a generic homotopy supported in a neighborhood of $D_q \cap g^{-1}[g(p),g(q)]$ which lowers $g(q)$ below $g(p)$ (without creating any new critical points). In this case there is also a generic homotopy supported in a neighborhood of $A_p \cap g^{-1}[g(p),g(q)]$ which raises $g(p)$ above $g(q)$. If $j \geq k$ this disjointness can always be arranged by a generic choice of gradient-like vector field, and thus we can get $g$ to be ordered, using either raising or lowering homotopies each time we need to switch the relative order of two critical points. However, if we are not careful, we may mess up the behavior of $g$ near $\phi_i(L_i)$. To avoid this, we need to make sure that either $D_q$ or $A_p$ misses each $\phi_i(L_i)$, 
which can be done by generically choosing the gradient-like vector field, as long as $\dim(L_i)=l_i < m/2$. (To see this, count dimensions in the level set $F$ containing $\phi_i(L_i)$ and note that we are asking for either $l_i < j$ or $l_i+k <m$; if $l_i \geq j \geq k$ then $l_i + k \leq 2l_i < m$.)

 To arrange that the critical values are ordered nicely with respect to the values $z_i$ as stated, we may need to further raise or lower some critical values, but the same dimension count argument works in this case.
\end{proof}

The following theorem is about ordered, indefinite generic homotopies on $M$; if $F_0$ and $F_1$ are connected, then ordered and indefinite implies fiber-connected. At the end of this section, in Lemma~\ref{L:Connected2Ordered}, we will discuss fiber-connectedness without the ordered assumption.

\begin{theorem} \label{T:MorseDeformationExistence}
 Given two indefinite, ordered Morse functions $g_0, g_1 : M \to I$ there exists an indefinite, ordered generic homotopy $g_t : M \to I$ from $g_0$ to $g_1$. If $m \geq 3$ and both $g_0$ and $g_1$ are standard with respect to each $\phi_i$ at height $z_i$, then we can arrange that, for all $t$, $g_t$ is standard with respect to each $\phi_i$ at height $z_i$. We can also arrange that all the births occur before the critical point crossings and that all the deaths occur after the critical point crossings.
\end{theorem}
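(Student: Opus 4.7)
The plan is to start with an arbitrary generic homotopy and systematically eliminate definite features. By standard Cerf theory, there exists a generic homotopy $g_t$ from $g_0$ to $g_1$. I first apply Cerf's rearrangement moves to order the homotopy so that all birth singularities occur in $[0, 1/4]$, all critical-value crossings in $[1/4, 3/4]$, and all death singularities in $[3/4, 1]$; this is possible by sliding births leftward and deaths rightward past crossings that do not involve the born or dying critical points.

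The crucial step is to eliminate the definite features. By the indefiniteness of $g_0$ and $g_1$, the number of definite index-$0$/$1$ births equals the number of definite index-$0$/$1$ deaths, and similarly for index-$(m-1)$/$m$ pairs; pair them up. For each such pair -- a definite birth at $t_b$ creating the cancelling pair $(p, q)$ of indices $0, 1$ matched with a definite death at $t_d$ -- I perform the following trading argument. Just before $t_b$, I introduce an auxiliary indefinite birth creating a cancelling pair $(r, s)$ of indices $1, 2$ localized near where $p$ will be born, and just after $t_d$, I introduce a corresponding auxiliary indefinite death. Using handle slides in $[1/4, 3/4]$ realized as critical-value crossings, I arrange that the death at $t_d$ now cancels $(p, r)$ rather than $p$'s original index-$1$ partner, which converts the originally definite death into an indefinite index-$1$/$2$ cancellation realized by the auxiliary death. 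Repeating for every paired definite feature eliminates all definite births and deaths. Definite index-$(m-1)$/$m$ features are handled dually by turning the homotopy upside-down.

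The ordered condition is then restored by interleaving extra crossings that swap the order of critical values whose indices differ by at least two; such crossings never create or destroy critical points. For the framings: when $m \geq 3$, each $\phi_i(L_i)$ has positive codimension $m - 1 - l_i$ in its level set, and by the same dimension-counting argument as in the proof of Theorem~\ref{T:MorseExistence} (using $l_i < m/2$), a generic $1$--parameter family of gradient-like vector fields $V_t$ makes the ascending and descending manifolds used in every cancellation, slide, and crossing disjoint from small neighborhoods of $\phi_i([-\epsilon, \epsilon] \times \{0\} \times L_i)$, preserving standardness throughout. The required births-before-crossings-before-deaths structure is preserved because all new births and deaths introduced in the trading step are inserted just outside $[t_b, t_d]$ and can then be slid into the appropriate outer regions.

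The main obstacle is the trading step: the handle slide ``replacing $q$ by $r$ as cancelling partner of $p$'' must be realizable as an explicit, indefinite sequence of critical-value crossings in the $1$--parameter homotopy, and one must verify that the substitution can always be arranged so that the subsequent cancellation is indefinite and that every auxiliary critical point introduced is paired off correctly, so that the overall critical-point count at $t = 1$ matches $g_1$. Tracking this bookkeeping -- especially when multiple definite births and deaths interact in $[1/4, 3/4]$ -- is the subtle part of the argument.
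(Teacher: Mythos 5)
Your trading step does not actually eliminate the definite critical points, and the mechanism you invoke cannot do so. Once a definite $0$--$1$ pair $(p,q)$ is born at $t_b$, the index $0$ critical point $p$ exists for all $t \in (t_b, t_d)$, so $g_t$ fails to be indefinite on that whole interval, not just at the birth and death cusps. Reassigning which index-$1$ critical point cancels $p$ at the death via ``handle slides realized as critical-value crossings'' leaves $p$ present on $(t_b,t_d)$: a critical-value crossing only changes the relative order of critical \emph{values}; it never changes the set of critical points or their indices. Equally, after your proposed trade the death at $t_d$ would still annihilate the index-$0$ point $p$ against an index-$1$ partner ($r$ has index $1$), so the death remains a $0$--$1$ death, not a $1$--$2$ one. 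There is no way to get from the given $g_t$ to one that is pointwise indefinite using only crossings and the insertion of auxiliary births/deaths outside $[t_b,t_d]$.

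What is actually needed -- and what the paper supplies in Lemmas~\ref{L:UnmergeCancellation}, \ref{L:EyeCancellation}, and~\ref{L:SwallowtailCancellation} -- is the family of \emph{two}-parameter singularities (unmerge, eye death, swallowtail death) that literally shrink the $0$--fold arc $P$ away. The paper's argument is local along $P$: for each time $t \in [t_b,t_d]$ there is some index-$1$ critical point cancelling $P_t$; one covers $[t_b,t_d]$ by finitely many overlapping intervals $I_i$, choosing on each a fixed $1$--fold $Q^i$ that cancels $P$ over $I_i$ (with $Q^1$ and $Q^n$ being the birth/death partners), and then applies unmerge on the non-overlap interiors and eye or swallowtail deaths on the overlaps to delete $P$ entirely as a $2$--parameter deformation $g_{s,t}$. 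This mechanism, not a reordering of crossings, is the irreplaceable content. Your dimension-counting observations for the $\phi_i(L_i)$ neighborhoods and the ``births first, deaths last'' rearrangement are fine and agree with the paper, but you still also need the paper's trick of splitting a cancelling $Q^j$ via an inserted $1$--$2$ swallowtail whenever its descending manifold sweeps across $\phi_i(L_i)$; note this is where the hypothesis $m \ge 3$ enters, since otherwise the $2$--fold appearing in the $1$--$2$ swallowtail would itself be definite.
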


The proof of this theorem (and the one to follow about homotopies of homotopies) is in essentially the same spirit as the proof of the preceding theorem; we just need the right cancellation lemmas to get rid of definite critical points over time and we need to count dimensions to see that we can order critical points appropriately and avoid the submanifolds $\phi_i(L_i)$ as we modify the homotopy.

For the cancellation lemmas, we need to articulate conditions under which we can pass through eye, unmerge or swallowtail singularities to simplify the homotopy. When discussing a generic homotopy $g_t : M \to I$, we will frequently use the Cerf graphic to organize our argument; recall that this is the image in $I \times I$ of the critical points of $g_t$ under the map $G : (t,p) \mapsto (t,g_t(p))$. We will use the term ``$k$--fold'' to refer to an arc of index $k$ critical points in $I \times M$. If we label a $k$--fold $P$, then for a fixed time $t$, $P_t$ will refer to the index $k$ critical point on $P$ at time $t$. We will also fix a generic $1$--parameter family of gradient-like vector fields so that we may refer to gradient flow lines for each $g_t$. (Here ``generic'' means that the $1$--parameter families of descending manifolds intersect transversely in level sets, which means that handle slides occur at isolated times, with lower index critical points never sliding over higher index critical 
points.) We will say that a $(k+1)$--fold $Q$ ``cancels'' a $k$--fold $P$ over a time interval $A$ if $Q_t$ cancels $P_t$ for every time $t \in A$. 

In the following three lemmas, suppose that $g_t : M \to I$ is a generic homotopy between Morse functions $g_0$ and $g_1$ with a generic $1$--parameter family of gradient-like vector fields. We state conditions under which we can modify $g_t$ so as to reverse the arrows in Figures~\ref{F:Eye},~\ref{F:Merge} and~\ref{F:Swallowtail}. We leave the proofs to the reader; the basic idea, as with the proof of the standard cancellation lemma (Lemma~\ref{L:MorseCancellation}), is as follows: We first push extraneous critical points down far enough so that we only need to work with a single descending disk and ascending disk. Then we use the uniqueness of a gradient flow line to find coordinates on a neighborhood of the gradient flow line in which the gradient flow line itself lies in one coordinate axis, and in the other coordinates the Morse function has the usual Morse model. Then the cancellation occurs entirely in the coordinate chart containing the gradient flow line.

\begin{lemma}[Unmerge] \label{L:UnmergeCancellation}
 Consider a $k$--fold $P$ and  a $(k+1)$--fold $Q$ over a time interval $[t_0,t_1]$ such that $Q$ cancels $P$ over all of $[t_0,t_1]$. Then for some arbitrarily small $\delta >0$ there is a generic homotopy between homotopies $g_{s,t}$, with $g_{0,t} = g_t$, passing through a single unmerge singularity (see Figure~\ref{F:Merge}) at $s=1/2$ and no other $1$--parameter singularities, independent of $s$ for $t \in [0,t_0] \cup [t_1,1]$, such that, with respect to $g_{1,t}$, the cancelling pair $Q_t$ and $P_t$ die at $t=t_0+\delta$ and are reborn at $t=t_1-\delta$. For each $t \in [t_0,t_1]$, $g_{s,t}$ is independent of $s$ outside an arbitrarily small neighborhood of the descending manifold for $Q_t$. Also note that, with respect to $g_{1,t}$, $Q_t$ still cancels $P_t$ on $[t_0,t_0+\delta)$ and on $(t_1-\delta,t_1]$. Furthermore we can arrange that any other folds that cancelled $P$ on $[t_0,t_0+\delta]$ or $[t_1-\delta,t_1]$ will still cancel $P$ there.
\end{lemma}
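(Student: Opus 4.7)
The plan is to construct a parametrized version of the standard Morse cancellation. The hypothesis that $Q_t$ cancels $P_t$ for every $t\in[t_0,t_1]$ provides a smoothly varying unique gradient trajectory from $Q_t$ down to $P_t$, along which one can build a family of local coordinates in which the pair has the standard cancellation model with a positive $t$--dependent parameter $c(t)$; varying $c$ to negative values across a generic zero at $s=1/2$ realizes the unmerge and produces the desired death-birth pair at $s=1$.

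First I would preprocess the homotopy so that for each $t \in [t_0+\delta/4,t_1-\delta/4]$ the closure $C_t$ of the descending manifold of $Q_t$ in $g_t^{-1}[g_t(P_t),g_t(Q_t)]$ contains no critical points other than $P_t$. By $1$--parameter genericity of the gradient-like vector field, extraneous critical points can only meet $C_t$ at isolated times, and such interferences can be removed by lowering the offending critical points beneath $g_t(P_t)$ via a local $s$--homotopy supported in a small neighborhood of $C_t$, in the spirit of the preparation step of Lemma~\ref{L:MorseCancellation}. Keeping this preparation trivial near $t=t_0$ and $t=t_1$ preserves the original data there.

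Second, I would apply a parametrized tubular neighborhood argument along the smoothly varying flow line from $Q_t$ to $P_t$ to construct smooth families of coordinates on a neighborhood $U_t$ of this flow line and on a neighborhood of the critical values in $I$, in which
$$g_t(x_1,\ldots,x_m)=\mu_k^{k}(x_1,\ldots,x_k)+x_{k+1}^{3}-c(t)x_{k+1}+\mu_0^{m-k-1}(x_{k+2},\ldots,x_m)$$
for some smooth positive function $c:[t_0,t_1]\to\R_{>0}$. Then I would choose a smooth family $c_s(t)$ with $c_0=c$, with $c_s(t)=c(t)$ for $t\in[t_0,t_0+\delta/2]\cup[t_1-\delta/2,t_1]$ and all $s$, with $c_{1/2}$ touching $0$ quadratically at a single interior point $t_*\in(t_0+\delta,t_1-\delta)$, and with $c_1(t)<0$ on $(t_0+\delta,t_1-\delta)$ and $c_1(t)>0$ outside, with simple zeros precisely at $t_0+\delta$ and $t_1-\delta$. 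Defining $g_{s,t}$ to equal $g_t$ outside $U_t$ and to have $c(t)$ replaced by $c_s(t)$ inside gives a $2$--parameter family with a single unmerge singularity at $(s,t)=(1/2,t_*)$ and the claimed death and birth at $s=1$; the independence statements and the persistence of any other cancelling folds on the unmodified intervals follow from the support condition of the modification.

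The main obstacle is the preparation step: ensuring simultaneously that the cancelling flow line from $Q_t$ to $P_t$ is preserved throughout $[t_0,t_1]$ and that the descending disk $C_t$ is free of extra critical points for all $t$ in the modified region. This requires an inductive argument lowering extraneous critical values one at a time while not disturbing the unique gradient flow line from $Q_t$ to $P_t$, paralleling the argument in the proof of Theorem~\ref{T:MorseExistence}. Once this preparation is complete, the parametrized normal form and the choice of $c_s$ are routine.
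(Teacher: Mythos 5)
Your approach matches the paper's (which leaves the proof to the reader with only a brief sketch): push extraneous critical points off the descending disk, use the unique gradient trajectory from $Q_t$ to $P_t$ to build a $t$--parametrized normal form near that trajectory, and then move the cancellation parameter $c_s(t)$ through a single generic quadratic tangency with zero. Your explicit choice of the family $c_s(t)$ and the resulting unmerge at $(1/2,t_*)$ is a correct fleshing-out of the paper's outline.
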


\begin{lemma}[Eye death] \label{L:EyeCancellation}
 Consider a $k$--fold $P$ and  a $(k+1)$--fold $Q$ over a time interval $[t_0,t_1]$ such that $Q$ cancels $P$ over all of $(t_0,t_1)$. Also suppose that the critical points $Q_t$ and $P_t$ are born as a cancelling pair at time $t_0$ and die as a cancelling pair at time $t_1$. Then for some small $\delta > 0$ there is a generic homotopy between homotopies $g_{s,t}$, with $g_{0,t} = g_t$, passing through a single eye death singularity (see Figure~\ref{F:Eye}) at $s=1/2$ and no other $2$--parameter singularities, independent of $s$ for $t \in [0,t_0-\delta] \cup [t_1+\delta,1]$, such that, in $g_{1,t}$, the cancelling pair $Q_t$ and $P_t$ have cancelled for all $t \in [t_0,t_1]$. For each $t \in (t_0,t_1)$, $g_{s,t}$ is independent of $s$ outside an arbitrarily small neighborhood of the descending manifold for $Q_t$. For $t \in \{t_0,t_1\}$, $g_{s,t}$ is independent of $s$ outside a neighborhood of the birth/death point.
\end{lemma}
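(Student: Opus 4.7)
The plan is to construct the desired $2$--parameter homotopy as a local modification in a tubular neighborhood of the "eye region" swept out by the cancelling gradient flow lines, reversing the standard eye birth model. For each $t \in (t_0,t_1)$, the hypothesis that $Q_t$ cancels $P_t$ gives a unique gradient flow line $\gamma(t)$ from $Q_t$ to $P_t$; these assemble smoothly into a $1$--parameter family of arcs that degenerate to the birth point at $t_0$ and to the death point at $t_1$. The union $E = \bigcup_t \gamma(t)$ is a closed $2$--disk in $[t_0,t_1] \times M$, and the proof reduces to putting $g_t$ into a standard form on a tubular neighborhood of $E$.

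First I would apply the $1$--parameter analogue of the push-down step in the proof of Lemma~\ref{L:MorseCancellation}: using the generic family of gradient-like vector fields, handle slides and $1$--parameter critical value re-orderings let us move all critical points other than $P_t$ and $Q_t$ out of a small neighborhood of the closure $C_t$ of the descending manifold of $Q_t$ for each $t \in (t_0 + \delta', t_1 - \delta')$, for any chosen small $\delta' > 0$. As in the proof of Theorem~\ref{T:MorseExistence}, genericity of the family of gradient-like vector fields ensures that the low-dimensional traces $C_t$ can be made to avoid the higher-index critical points, so this can be carried out without touching $g_t$ outside a neighborhood of $E$.

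Next, I would produce a $t$--family of coordinates on a neighborhood of $E$ in which $g_t$ takes the standard eye form. On the open interval $(t_0+\delta', t_1-\delta')$, the uniqueness of the flow line $\gamma(t)$ together with the standard Morse lemma yields smoothly varying coordinates $(x_1,\dots,x_m)$ in which $g_t(\mathbf{x}) = -x_1^2-\cdots-x_k^2 + x_{k+1}^3 + c(t) x_{k+1} + x_{k+2}^2 + \cdots + x_m^2$, where $c(t) < 0$ realizes the two critical points $Q_t,P_t$ at $x_{k+1} = \pm\sqrt{-c(t)/3}$. Near $t_0$ and $t_1$, the definition of a birth/death singularity already provides local coordinates in which $g_t$ takes the form above with $c(t)$ vanishing linearly. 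A relative coordinate uniqueness argument (the same technology underlying the statement that eye births are a generic codimension $1$ event, compare Section~4 and Appendix~A of~\cite{LekiliWrinkled} or~\cite{HatcherWagoner}) allows us to interpolate so that, after a reparametrization of $t$, we may arrange $c(t) = (t-t_*)^2 - \eta$ for some small $\eta > 0$ and $t_* = (t_0+t_1)/2$, matching the standard model.

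Once $g_t$ is in standard form on a neighborhood of $E$, define
\[
g_{s,t}(\mathbf{x}) = -x_1^2 - \cdots - x_k^2 + x_{k+1}^3 + \bigl((t-t_*)^2 - \eta - s \, \rho\bigr) x_{k+1} + x_{k+2}^2 + \cdots + x_m^2,
\]
inside the neighborhood, where $\rho > 0$ is chosen so that at $s=1$ the coefficient of $x_{k+1}$ is strictly negative on $[t_0-\delta, t_1+\delta]$, eliminating all critical points of $g_{1,t}$ in the neighborhood. Extend by $g_t$ outside using a bump function in a collar where no critical points live. At $s=1/2$ one passes through a single eye death singularity of the form specified in item (iv) of the generic homotopy of homotopies definition, and by construction $g_{s,t}$ is independent of $s$ outside an arbitrarily small neighborhood of the descending manifold for $Q_t$ (for $t \in (t_0,t_1)$) and outside a neighborhood of the birth/death points (for $t \in \{t_0,t_1\}$). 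The main obstacle is the coordinate interpolation step, namely producing a single smooth family of coordinates that simultaneously matches the Morse-lemma coordinates on the interior and the birth/death normal forms at the endpoints; this is technical but is the usual ingredient making eye-type singularities a codimension one phenomenon in Cerf theory.
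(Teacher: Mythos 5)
Your approach is essentially the same as the paper's own sketch for this lemma: the paper states that it leaves the proofs of Lemmas~\ref{L:UnmergeCancellation}--\ref{L:SwallowtailCancellation} to the reader, and outlines exactly the three steps you carry out --- push extraneous critical points down out of a neighborhood of the descending manifold, use uniqueness of the connecting gradient flow line to build a family of Morse--lemma coordinates along the eye region, and then perform the cancellation entirely inside that coordinate chart by deforming the cubic term. Your treatment of the endpoints $t_0,t_1$ via the birth/death normal forms and the reparametrization making $c(t)=(t-t_*)^2-\eta$ is the right way to glue the interior Morse coordinates to the cusp models, and the concern you flag about matching coordinates smoothly is precisely the technical point the paper defers.

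There is, however, a sign error that, as written, produces the opposite of what you want. In the model $x_{k+1}^3 + c\,x_{k+1}$, the critical points satisfy $3x_{k+1}^2 = -c$; they exist exactly when $c<0$, not when $c>0$. You correctly state this earlier ($c(t)<0$ gives $Q_t,P_t$ at $x_{k+1}=\pm\sqrt{-c(t)/3}$), but then define
$g_{s,t}$ with coefficient $(t-t_*)^2-\eta - s\rho$ and claim that choosing $\rho$ so this is ``strictly negative'' at $s=1$ eliminates the critical points --- that would instead make the eye \emph{grow}. The coefficient must \emph{increase} with $s$: take, e.g., $\bigl((t-t_*)^2 - \eta + 2\eta s\bigr)x_{k+1}$, so that at $s=0$ you recover the eye with critical points for $|t-t_*|<\sqrt{\eta}$, at $s=1/2$ the coefficient is $(t-t_*)^2$ (the eye death singularity, a double zero in $t$), and at $s=1$ it is $(t-t_*)^2+\eta>0$, giving no critical points. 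With that sign corrected, the construction works as intended.
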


\begin{lemma}[Swallowtail death] \label{L:SwallowtailCancellation}
 Consider a $k$--fold $P$ and two $(k\pm 1)$--folds $Q$ and $R$ over a time interval $(t_0,t_1)$. Suppose furthermore that $Q$ and $P$ are born as a cancelling pair at time $t_0$, that $R$ and $P$ die as a cancelling pair at time $t_1$, and that $Q$ cancels $P$ over $(t_0,(t_0+t_1)/2+\delta)$ while $R$ cancels $P$ over $((t_0+t_1)/2-\delta,t_1)$ for some small $\delta >0$. Then there is a generic homotopy between homotopies, $g_{s,t}$, with $g_{0,t} = g_t$, passing through a single swallowtail death singularity (see Figure~\ref{F:Swallowtail}) at $s=1/2$ and no other $2$--parameter singularities, independent of $s$ for $t \in [0,t_0-\delta] \cup [t_1+\delta,1]$ (again for some small $\delta >0$) such that, in $g_{1,t}$, the $k$--fold $P$ has disappeared and the $(k\pm 1)$--folds $Q$ and $R$ have become the same fold. For each $t \in (t_0,(t_0+t_1)/2-\delta)$, $g_{s,t}$ is independent of $s$ outside an arbitrarily small neighborhood of the descending/ascending (according to whether $\pm=+$ or $\pm =-$) manifold for $Q_t$, while for each $t \in ((t_0+t_1)/2 + \delta,t_1)$ the independence is outside a neighborhood of the descending/ascending manifold for $R_t$, and for $t \in [(t_0+t_1)/2 - \delta,(t_0+t_1)/2+\delta]$ we need a neighborhood of the union of the descending/ascending manifolds for $Q_t$ and $R_t$. For $t \in \{t_0,t_1\}$ the independence is outside a neighborhood of the birth/death points.
\end{lemma}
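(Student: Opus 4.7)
The plan is to follow the template of Lemmas~\ref{L:UnmergeCancellation} and~\ref{L:EyeCancellation}: first isolate the swallowtail configuration by pushing extraneous critical points out of the way, then construct coordinates on a neighborhood of the union of the descending manifolds of $Q_t$ and $R_t$ in which $g_t$ matches the swallowtail normal form at some fixed value $s_0$ of the normal-form parameter, and finally define $g_{s,t}$ by interpolating this parameter from $s_0$ (three-critical-point regime) through $0$ (the swallowtail singularity) to a value $s_1$ on the opposite side (one-critical-point regime). After reparametrizing $s \in [0,1]$ so that the swallowtail event occurs at $s = 1/2$, we obtain the required generic homotopy: $P$ has disappeared at $s=1$ and $Q, R$ have merged into a single smooth fold.

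For the isolation step, by the handle-slide and critical-value-lowering arguments already used in the proofs of Lemma~\ref{L:MorseCancellation} and Theorem~\ref{T:MorseExistence}, applied here in the $1$-parameter family setting, I would perform a preliminary small generic homotopy of the homotopy $g_t$ so that for each $t \in [t_0, t_1]$ the only critical points in the relevant level range are $P_t$, $Q_t$, and $R_t$. Near $t_0$ and $t_1$ we already have the standard birth and death normal forms for the cancelling pairs $(P, Q)$ and $(P, R)$. On the sub-interval $(t_0, (t_0+t_1)/2 - \delta)$, the unique gradient flow line from $Q_t$ down to $P_t$ can be straightened, exactly as in Lemma~\ref{L:MorseCancellation}, to a coordinate axis along which $g_t$ takes the standard Morse cancellation form on a tubular neighborhood; the analogous construction works on $((t_0+t_1)/2 + \delta, t_1)$ using the $R_t$-to-$P_t$ flow line. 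In the overlap region $((t_0+t_1)/2 - \delta, (t_0+t_1)/2 + \delta)$ both unique flow lines exist and both descend to $P_t$; by an additional generic perturbation of the gradient-like vector field one arranges their union to form a smooth arc through $P_t$, with $Q_t$ and $R_t$ on opposite ends approaching $P_t$ from opposite directions.

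With this arc straightened into the $x_{k+1}$-axis and the Morse model $\mu_k(\mathbf{x})$ installed in the transverse directions, a standard singularity-theoretic normal-form theorem (in the spirit of the normal forms discussed in~\cite{Wassermann} and~\cite{HatcherWagoner}) provides a change of coordinates and a reparametrization of $t$ such that, near the wedge, $g_t = \mu_k(\mathbf{x}) \pm x_{k+1}^4 \mp s_0 x_{k+1}^2 + (t-t_*) x_{k+1}$ for some $s_0$ in the three-critical-point regime (with signs chosen to match the indices of $P$, $Q$, $R$ in the statement). I would then define $g_{s,t}$ by replacing $s_0$ in this coordinate expression with $(1-s)s_0 + s s_1$, where $s_1$ lies on the opposite side of the swallowtail value $0$, while keeping $g_{s,t} = g_t$ outside the coordinate chart. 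The only $2$-parameter singularity in this family is the swallowtail at the value of $s$ for which the interpolant passes through $0$, and the $s$-support constraints stated in the lemma follow directly from the localization of the chart to neighborhoods of the relevant descending manifolds.

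The hard part is the coordinate construction in the overlap interval, where two distinct gradient flow lines must be straightened simultaneously into a single coordinate axis while preserving the Morse model in the transverse directions. Outside the overlap this is the same single-flow-line straightening used in Lemma~\ref{L:MorseCancellation}, but in the overlap one must arrange that the two descending disks of $Q_t$ and $R_t$ glue along $P_t$ into a single smooth arc whose ambient neighborhood is modelled on the critical locus of the swallowtail quartic. Showing that a generic $1$-parameter perturbation of the gradient-like vector field removes all tangencies between these two disks at $P_t$, so that the union is genuinely a smooth arc and the swallowtail normal form applies, is the key technical point; once it is in place, the rest of the plan is the standard normal-form-and-interpolate argument.
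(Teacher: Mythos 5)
Your proposal follows the same approach the paper sketches (push extraneous critical points out of the way, straighten the relevant gradient flow line(s) into a coordinate axis carrying the normal form, interpolate the unfolding parameter through the degenerate value); the paper explicitly leaves the details of Lemmas 4.5--4.7 to the reader with exactly this outline. You correctly single out the one feature that distinguishes the swallowtail case from the unmerge case, namely that the two flow lines descending from $Q_t$ and $R_t$ must be arranged to glue at $P_t$ into a single smooth arc before the quartic normal form can be installed --- the point isn't really ``removing tangencies'' so much as matching the two rays along which the flow lines enter the ascending manifold of $P_t$ to opposite directions, which can be done by an isotopy of the gradient-like vector field in an annular region around $P_t$, but the concern and its resolution are both in the intended spirit.
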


\begin{proof}[Proof of Theorem~\ref{T:MorseDeformationExistence}]

 As in the proof of Theorem~\ref{T:MorseExistence}, there is no difficulty in finding a generic homotopy $g_t : M \to I$, and if $g_0$ and $g_1$ are standard with respect to each $\phi_i$ at height $z_i$ then we can make $g_t$ independent of $t$ on these neighborhoods. Arranging for births to happen first and deaths last is standard, using connectedness; $g_t$ is modified via a generic homotopy between homotopies which passes through cusp-fold crossings, moving left-cusps (births) further left (earlier), and right-cusps (deaths) further right (later). However it requires some work to make $g_t$ indefinite and then ordered for all $t \in (0,1)$.

 We will cancel the $0$--folds one at a time. Consider a $0$--fold $P$ which is born at time $a$ and dies at time $b$. At each time $t \in [a,b]$, there is some index $1$ critical point $q$ which cancels $P_t$. Thus there is a sequence $a=t_0 < t_1 < \ldots < t_n=b$ and some $\delta>0$, giving a covering of $[a,b]$ by intervals $I_1=[a,t_1+\delta), I_2= (t_1-\delta,t_2+\delta), \ldots , I_n=(t_{n-1}-\delta,b]$, and a sequence of $1$--folds $Q^1, \ldots, Q^n$ such that each $Q^i$ cancels $P$ over the interval $I_i$. We do this so that $Q^1$ is the $1$--fold born with $P$ as a cancelling $0$--$1$ pair at time $t_0=a$ and so that $Q^n$ is the $1$--fold which dies with $P$ as a cancelling pair at time $t_n=b$. Using the above three lemmas we can then cancel $P$ with the $Q^i$'s: First cancel over the non-overlapping parts of the open intervals $I_2, \ldots, I_{n-1}$ using Lemma~\ref{L:UnmergeCancellation}. 
 Then cancel over the overlaps and $I_1$ and $I_n$ using either Lemma~\ref{L:EyeCancellation} or Lemma~\ref{L:SwallowtailCancellation}, depending on whether the two cancelling $1$--folds $Q^i$ and $Q^{i+1}$ in the overlap region $(t_i-\delta,t_i+\delta)$ are the same or different. Going from cancelling on the nonoverlapping regions to the overlapping regions requires the extra clauses in Lemma~\ref{L:UnmergeCancellation} to the effect that whatever cancelled $P$ at the beginning still cancels $P$ wherever it has not been killed with Lemma~\ref{L:UnmergeCancellation}.
 
 The above argument ignored the issue of the submanifolds $\phi_i(L_i)$. If $m \geq 3$ and both $g_0$ and $g_1$ are standard with respect to each $\phi_i$ at height $z_i$, then we want to arrange that, for all $t$, $g_t$ is standard with respect to each $\phi_i$ at height $z_i$. Since, before cancelling the definite folds, we had arranged for this property to hold, we need to arrange that, in each application of Lemmas~\ref{L:UnmergeCancellation},~\ref{L:EyeCancellation} and~\ref{L:SwallowtailCancellation}, we avoid neighborhoods of each $\phi_i(L_i)$. In other words, all the descending manifolds for the cancelling $1$--folds down to the level of the cancelled $0$--folds should avoid $\phi_i(L_i)$. Counting dimensions we see that this can generically be achieved at all but finitely many times $t$, which are distinct from the times $t_0, t_1, \ldots, t_n$ at which we switch from one cancelling $1$--fold to another. If the descending manifold for a $1$--fold $Q^j$ intersects some $\phi_i(L_i)$ at time $t_*$, 
with $t_{j-1} < t_* < t_j$, we break $Q^j$ into two $1$--folds by introducing a $1$--$2$ swallowtail (passing through a swallowtail singularity) at time $t_*$ along $Q^j$. This is illustrated in Figure~\ref{F:SplitQj}; we label the two new $1$--folds $Q^j_-$ and $Q^j_+$ as indicated in the figure, and observe that we can arrange for the descending manifold for $Q^j_-$ to meet $\phi_i(L_i)$ at some time $t_- > t_*$ while the descending manifold for $Q^j_+$ meets $\phi_i(L_i)$ at some time $t_+ < t_*$. Then we break the interval $(t_{j-1}-\delta,t_j+\delta)$ into two overlapping intervals $(t_{j-1}-\delta,t_*+\delta)$ and $(t_*-\delta,t_j+\delta)$ and replace the single cancelling $1$--fold $Q^j$ with $Q^j_-$ over $(t_{j-1}-\delta,t_*+\delta)$ and $Q^j_+$ over $(t_*-\delta,t_j+\delta)$. (We might, of course, need to decrease $\delta$.)
 \begin{figure}[ht!]
\labellist
\small\hair 2pt
\pinlabel $Q_j$ [b] at 52 47 
\pinlabel $t_{j-1}$ [t] at 37 15
\pinlabel $t_{*}$ [t] at 85 15
\pinlabel $t_{j}$ [t] at 133 15
\pinlabel $P$ [bl] at 155 16
\pinlabel $Q_j^-$ [b] at 222 47 
\pinlabel $Q_j^+$ [b] at 285 47 
\pinlabel $t_{j-1}$ [t] at 206 15
\pinlabel $t_+$ [t] at 246 15
\pinlabel $t_{*}$ [t] at 256 15
\pinlabel $t_-$ [tl] at 263 15
\pinlabel $t_{j}$ [t] at 303 15
\pinlabel $P$ [bl] at 325 16
\endlabellist
\begin{center}
\includegraphics{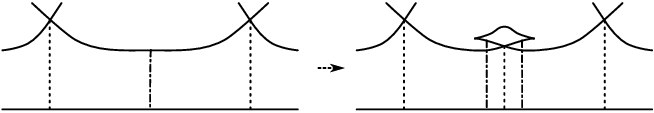}
\caption{Splitting $Q^j$.}
\label{F:SplitQj}
\end{center}
\end{figure}

 Note that the above argument required $m \geq 3$ because otherwise the $2$--fold in the $1$--$2$ swallowtail is a definite fold.
 
 \label{P:OrderingHomotopy}
 To arrange that $g_t$ is ordered when $g_0$ and $g_1$ are ordered, we first need to arrange that each birth or death of a cancelling $k$--$(k+1)$ pair occurs above all the other $k$--folds and below all the other $(k+1)$--folds. This is straightforward because such a modification of $g_t$ can be achieved through a generic homotopy supported in a neighborhood of an arc, which can be chosen to be disjoint from the $\phi_i(L_i)$'s. Now the only issue is pulling $k$--folds below $j$--folds when $k < j$ (or pushing $j$--folds above $k$--folds); this can be achieved if we ignore the $\phi_i(L_i)$'s for the same reason that it can be achieved for a fixed Morse function, as in Theorem~\ref{T:MorseExistence}, namely that, for a generic $1$--parameter family of gradient-like vector fields, $k$--handles will not slide over $j$--handles if $k < j$. However, if we want to avoid modifying $g_t$ near each $\phi_i(L_i)$ we need to be more careful, and to do this we count dimensions again.

 Here we need to check that either the $1$--parameter descending disk for the $k$--fold $Q$ or the $1$--parameter ascending disk for the $j$--fold $P$ misses the $l_i$--dimensional submanifold $\phi_i(L_i)$ in the level set $g^{-1}(z_i)$, which is presumed to be between $Q$ and $P$. The level set is $(m-1)$--dimensional, the descending sphere for $Q$ in the level set is $(k-1)$--dimensional, and the ascending  sphere for $P$ is $(m-j-1)$--dimensional. However, because of the parameter $t$, we now want that either $(k-1)+1+l_i < m-1$ or that $(m-j-1)+1+l_i < m-1$, i.e. that $k+l_i < m-1$ or that $l_i < j-1$. If $l_i \geq j-1$, so that $k < j \leq l_i+1$, we have $k \leq l_i$ and thus $k+l_i \leq 2l_i$. Thus we are fine as long as $l_i < (m-1)/2$, but in our initial hypotheses we only assumed that $l_i < m/2$. The only potentially bad case is when $m$ is odd, $k = l_i = (m-1)/2$ and $j = (m+1)/2$. In this case both the ascending sphere for $P$ and the descending sphere for $Q$ will intersect $\phi_i(L_i)$ at 
discrete times. However, now we simply note that, again by genericity, these times will be distinct for $P$ and $Q$, and so at times when the ascending sphere for $P$ intersects $\phi_i(L_i)$ we lower $Q$ below $P$ while at times when the descending sphere for $Q$ intersects $\phi_i(L_i)$ we raise $P$ above $Q$.

\end{proof}

\begin{theorem} \label{T:MorseDefOfDefsExistence}
 Suppose that $m \geq 3$. Given two indefinite, ordered generic homotopies $g_{0,t}, g_{1,t} : M \to I$ between indefinite Morse functions $g_{0,0} = g_{1,0}$ and $g_{0,1} = g_{1,1}$, there exists an indefinite, almost ordered generic homotopy of homotopies $g_{s,t} : M \to I$ from $g_{0,t}$ to $g_{1,t}$ with fixed endpoints. When $m \geq 4$ and $F_0$ and $F_1$ are both connected this guarantees that all level sets of each $g_{s,t}$ are connected. In the case where $m = 3$ and $F_0$ and $F_1$ are both connected, we can do a little extra work to arrange that all level sets of each $g_{s,t}$ are connected, even though the ``almost ordered'' condition is not sufficient to imply this.
\end{theorem}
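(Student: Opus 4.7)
The plan is to build $g_{s,t}$ in three stages: start with any generic $2$--parameter homotopy of homotopies between $g_{0,t}$ and $g_{1,t}$ with fixed endpoints (which exists by standard genericity of the $2$--parameter family), then kill all definite folds, and finally enforce almost-ordering.

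The main obstacle is removing definite folds, which requires $2$--parameter analogs of Lemmas~\ref{L:UnmergeCancellation}, \ref{L:EyeCancellation}, and~\ref{L:SwallowtailCancellation}. The locus $D \subset [0,1]^2$ of parameter values where $g_{s,t}$ has an index-$0$ critical point is a generic $1$--complex, and since $g_{0,t}$ and $g_{1,t}$ are indefinite, $D$ is disjoint from the vertical edges $s \in \{0,1\}$. Along a smooth open arc of $D$ each definite $0$--fold has a canceling index-$1$ fold by the connectedness argument used in Theorem~\ref{T:MorseExistence}, and a $1$--parameter family of unmerges removes a neighborhood of the arc from $D$. Near an endpoint of an arc of $D$ at a cusp, the $1$--parameter family of cancellations takes the form of a $1$--parameter family of eye or swallowtail deaths. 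The genuinely new phenomena occur where the canceling $1$--fold must change along the arc; here the family of unmerges encounters codimension-$2$ singularities, which are precisely the \emph{butterfly} and the \emph{elliptic umbilic}, completing the $2$--parameter cancellation calculus. After finitely many such moves $D$ becomes empty, and definite $m$--folds are eliminated by the dual argument. Dimension counts parallel to those in Theorem~\ref{T:MorseDeformationExistence} ensure that the $\phi_i(L_i)$ are avoided throughout; here we use $m \geq 3$ so that the $2$--folds introduced in intermediate swallowtails are themselves indefinite.

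For the almost-ordering stage, a $2$--parameter transversality count shows that the ascending manifold of a $j$--fold and the descending manifold of a $k$--fold intersect generically in dimension $k - j + 2$ inside $M \times [0,1]^2$. For $j - k \geq 3$ the intersection is empty and the heights can be swapped globally in the family; for $j - k = 2$ it is a discrete set of isolated parameter values, at which one swaps the heights locally using the same raising/lowering trick as in the proof of Theorem~\ref{T:MorseDeformationExistence}, alternating depending on which of the ascending or descending sphere hits $\phi_i(L_i)$ first. This yields the almost-ordered property. As in the $1$--parameter case we arrange all births (eye births, swallowtail births, unmerges) first, all height-swaps and Reidemeister-type crossings in the middle, and all deaths last.

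Finally, when $m = 3$ with $F_0$ and $F_1$ connected, almost-ordering does not force connectedness of level sets because an index-$1$ and an index-$2$ critical point may coexist at the same height. Since the endpoint homotopies $g_{0,t}$ and $g_{1,t}$ are fully ordered and fiber-connected, any disconnection of a level set of some $g_{s_*,t_*}$ occurs over a compact region of parameter space bounded away from $s \in \{0,1\}$. Such disconnections can be repaired by additional handle slides of the offending $2$--fold over a neighboring $1$--fold, performed fiberwise across the bad region, reconnecting the level set without producing a definite fold; the dimension count for these slides closes because $F_0, F_1$ connected forces the necessary connecting arcs to exist. This is the ``little extra work'' alluded to in the theorem statement.
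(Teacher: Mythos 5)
Your high-level plan — kill definite folds via $2$--parameter analogs of the unmerge/eye/swallowtail lemmas, then enforce almost-ordering — matches the paper, and you correctly identify the butterfly and elliptic umbilic as the new codimension-$2$ phenomena. But the execution has several gaps that go beyond presentation.

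First, the central object is misdescribed. You treat the locus $D$ of parameter values with an index-$0$ critical point as a generic $1$--complex in $[0,1]^2$. In fact, having an index-$0$ critical point is an \emph{open} condition, so this locus is a $2$--dimensional region; the right object to track is the $2$--dimensional immersed surface $\Sigma$ of $0$--folds in $(s,t,z)$--space. This is not cosmetic: the entire combinatorial structure of the argument depends on it. The paper cuts $\Sigma$ into embedded pieces by (a) applying the $1$--parameter cancellation at constant--$s$ slices to sever $\Sigma$, and (b) pushing interior swallowtail births off $\Sigma$ and onto the neighboring $1$--fold surface via a butterfly singularity. Each embedded piece is then covered by open disks over which a single $1$--fold cancels, with a trivalent nerve. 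Cancelling over single and double overlaps leaves a triangular region near a trivalent vertex of the nerve where \emph{three distinct} $1$--folds compete; that triangle is recognized as the southern hemisphere of the $S^2$ bounding a $B^3$ deformation of the monkey saddle (the elliptic umbilic), and replacing it with the northern hemisphere trades a triangular $0$--fold for a triangular $2$--fold (requiring $m \geq 3$). Your ``canceling $1$--fold must change along the arc'' description captures only the $1$--parameter shadow of this; it does not produce or handle the triple-overlap phenomenon, which is where the elliptic umbilic is genuinely forced.

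Second, your transversality count for the ordering step is carried out in $M \times [0,1]^2$ (giving $k-j+2$) rather than in an intermediate level set $\times [0,1]^2$ (giving $k-j+1$), which is the relevant count since the obstruction to a height swap is an intersection between ascending and descending \emph{spheres} in a common regular level. You land on the right conclusion (``almost ordered'') but via a count that locates the isolated obstruction at index difference $2$ rather than the actual index difference $1$.

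Third, the $m=3$ fix is not right. The obstruction there is not repaired by ``handle slides of the offending $2$--fold over a neighboring $1$--fold performed fiberwise''; the whole point is that at an isolated parameter value $0 \in D$ one foot of the reconnecting $1$--handle $H_1$ is \emph{stuck} at the critical point of the separating $2$--handle $H_2$, so the slide cannot be performed. The paper's fix is to introduce a helper $1$--$2$ cancelling pair $(H_1', H_2')$ over the bad disk, perturb $H_1'$ so that it gets stuck at a different parameter value $d \neq 0$, and then observe that over the small disk around $0$ the helper $H_1'$ (not $H_1$) can be pulled below $H_2$ to reconnect the level, and symmetrically over the small disk around $d$ the original $H_1$ does the job. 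Without this explicit trading mechanism, the $m=3$ case remains open in your argument.
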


(Compare Proposition~3.6 in~\cite{HatcherWagoner}, which deals with approximately the same issue, but is about cancelling critical points of arbitrary indices and requires high ambient dimensions. There are many striking similarities between that proof and our proof of Theorem~\ref{T:MorseDefOfDefsExistence}.)

Note that we could also ask that $g_{s,t}$ behave well on neighborhoods
of the $L_i$'s as in the preceding theorems, and presumably there are constraints in terms of the dimensions involved, but we have no need for such a
result in this paper.

Although the statement of the theorem does not say this, we will actually be able to modify a given generic homotopy $g_{s,t}$ from $g_{0,t}$ to $g_{1,t}$ through a generic homotopy $g_{r,s,t}$ with yet one more parameter $r \in [0,1]$, with $g_{0,s,t} = g_{s,t}$, so that $g_{1,s,t}$ satisfies the various conditions we want. In doing so we will pass through higher codimension singularities, and so we could have ``cancellation'' lemmas analogous to Lemmas~\ref{L:MorseCancellation},~\ref{L:EyeCancellation},~\ref{L:UnmergeCancellation} and~\ref{L:SwallowtailCancellation}. In our case they would involve the ``butterfly singularity'' and the ``monkey saddle'' (or ``elliptic umbilic'').  However, since each only occurs once in the proof, we just develop them in the course of the proof. After discovering the necessary homotopies corresponding to these singularities, we then realized that they also play a central role, for similar reasons, in the work of Hatcher and Wagoner~\cite{HatcherWagoner}.

\begin{proof}
 There is always a generic homotopy $g_{s,t} : M \to I$ rel. boundaries, so the first issue is to make it indefinite, that is, to remove all $0$--folds ($m$--folds are treated the same way using $1-g_{s,t}$).
 
 Instead of the traditional Cerf graphic, we consider a $1$--parameter family of Cerf graphics, and in this case the $0$--folds form a $2$--dimensional immersed surface $\Sigma$, as in the example in Figure~\ref{F:Ex2DFolds}.
 \begin{figure}[ht!]
\labellist
\small\hair 2pt
\pinlabel $s$ [tr] at 5 9
\pinlabel $t$ [l] at 165 43
\pinlabel $z=g_{s,t}$ [b] at 56 110
\pinlabel $\Sigma$ at 42 56
\pinlabel $\Sigma$ at 82 77
\pinlabel $*$ at 114 48
\endlabellist
\begin{center}
\includegraphics{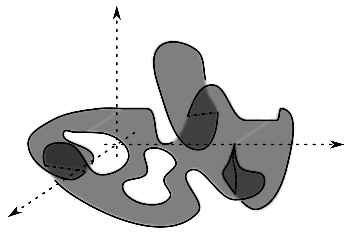}
\caption{Example of a $2$--dimensional surface $\Sigma$ of $0$--folds in a generic $2$--parameter homotopy between Morse functions. Note the swallowtail singularity at the point labelled $*$.}
\label{F:Ex2DFolds}
\end{center}
\end{figure}
In $\Sigma$, with respect to the $s$ direction, there are merges, unmerges, eyes and swallowtails; apart from the swallowtails, these appear as smooth curves with tangents parallel to the $(t,z)$ plane.

The first step is to cut $\Sigma$ into pieces, each of which is embedded in $I \times I \times I$. We do this by first cutting $\Sigma$ in the $(t,z)$ direction at many fixed $s$ values. Such a cut is done by first making $g_{s,t}$ independent of $s$ in a small $s$--interval $[s_*-\delta, s_*+\delta]$, then applying the technique in the proof of Theorem~\ref{T:MorseDeformationExistence} to modify the homotopy $g_{s_*,t}$ to get rid of definite folds at $s_*$, and then noting that the modification is through a generic homotopy between homotopies, which can then be run forward and backward in the $s$ direction as $s$ ranges from $s_*-\delta$ to $s_*$ to $s_*+\delta$. A typical example of the result is illustrated in Figure~\ref{F:Ex2DFoldsCut}.
 \begin{figure}[ht!]
\labellist
\small\hair 2pt
\pinlabel $s$ [tr] at 5 9
\pinlabel $t$ [l] at 165 43
\pinlabel $z=g_{s,t}$ [b] at 56 110
\endlabellist
\begin{center}
\includegraphics{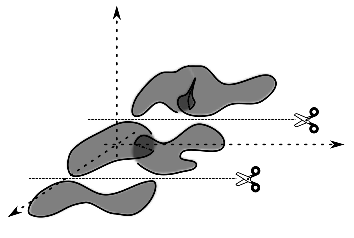}
\caption{After the first cuts, along constant $s$ slices. Note that components are still not embedded, due to the bad swallowtail.}
\label{F:Ex2DFoldsCut}
\end{center}
\end{figure}

The components of $\Sigma$ are not yet embedded because of the possibility that there are births of swallowtails in the middle of $\Sigma$.
However, in~\cite{HatcherWagoner}, page~199 (see also item~1 on page~194), it is shown exactly how such a swallowtail birth can be extended past the $0$--$1$ cusp and onto the surface of $1$--folds by passing through a ``butterfly singularity'', with the result that the swallowtail cuts $\Sigma$ into two parts, which intersect each other but no longer have a self-intersection. The change in the movie of Cerf graphics is shown in Figure~\ref{F:CutBadSwallowtail}, with accompanying graphs of the $1$--dimensional Morse functions. 
\begin{figure}[ht!]
\labellist
\small\hair 2pt
\pinlabel $0$ [t] at 50 227 
\pinlabel $1$ [b] at 53 241
\endlabellist
\begin{center}
\includegraphics{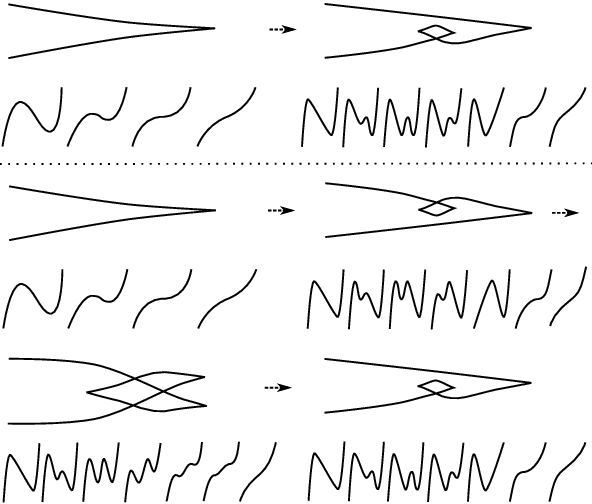}
\caption{Cutting the bad swallowtail onto the surface of $1$--folds using the butterfly singularity. Above the dotted line, the swallowtail occurs in the middle of a $0$--fold, leading to surface of $0$--folds that is not embedded. Below the line, the swallowtail is born in the middle of a $1$--fold, so that the two $0$--folds which intersect are in distinct components of the surface of $0$--folds.}
\label{F:CutBadSwallowtail}
\end{center}
\end{figure}

Now each component $\mathcal{P}$ of $\Sigma$ is embedded in $I \times I \times I$ and we will eliminate these components one by one, in much the same way we eliminated individual $0$--folds in Theorem~\ref{T:MorseDeformationExistence} and individual index $0$ critical points in Theorem~\ref{T:MorseExistence}. Noting that, for each $(s,t)$, the index $0$ critical point $\mathcal{P}_{s,t}$ coming from $\mathcal{P}$ is cancelled by some index $1$ critical point, we can cover $\mathcal{P}$ with open disks $\{\mathcal{P}^i\}$ over each of which we have chosen a particular disk $\mathcal{Q}^i$ of cancelling index $1$ critical points. We also arrange that every vertex in the nerve of this cover (including vertices on $\partial \mathcal{P}$) has valence $3$ and that every edge of the nerve is transverse to constant $s$ slices. Then we can use the ideas in the proof of Theorem~\ref{T:MorseDeformationExistence} to eliminate $\mathcal{P}$ at all points in exactly one or two of the open sets of the cover, and we reduce 
to the case where $\mathcal{P}$ is a union of disjoint triangles each cancelled by three distinct surfaces of $1$--folds. 

Figure~\ref{F:TripleCover} shows a sequence of Cerf graphics, representing the $2$--parameter Cerf graphic where three open sets intersect; on the right we show the covering in parameter space, with the nerve and its trivalent vertex. The labels $a$, $b$ and $c$ indicate the $1$--handles that cancel in each of the three open sets. At each point in parameter space, we adopt the convention that the closest index $1$ critical point to the $0$--fold is the cancelling one. This is not necessarily the case at first, but right before the cancellations this will be true. Next, Figure~\ref{F:FirstCancellation} shows the result of cancelling the $0$--fold with the appropriate $1$--folds away from the overlaps in the cover, with the labels $ab$, $bc$ and $ac$ indicating the pairs of $1$--folds which cancel in each region. Finally, Figure~\ref{F:CerfBelowMonkey} shows how this sequence of Cerf graphics is transformed by cancelling the swallowtails that remain when two of the open sets intersect, leaving the $0$--fold 
uncancelled only in a cusped-triangular neighborhood of the trivalent vertex. (The boxed numbers indicate points in parameter space for reference in the next figure to come.)
\begin{figure}[ht!]
\labellist
\small\hair 2pt
\pinlabel $z$ [b] at 14 163 
\pinlabel $z$ [b] at 14 107
\pinlabel $z$ [b] at 14 51
\pinlabel $t$ [l] at 106 128
\pinlabel $t$ [l] at 106 72
\pinlabel $t$ [l] at 106 15
\pinlabel $t$ [l] at 219 128
\pinlabel $s$ [t] at 6 5
\pinlabel $s$ [t] at 127 28
\pinlabel $0$ [b] at 59 132
\pinlabel $c$ [l] at 99 137
\pinlabel $b$ [l] at 99 148
\pinlabel $a$ [l] at 99 159
\pinlabel $a$ [l] at 99 103
\pinlabel $b$ [l] at 99 91
\pinlabel $c$ [l] at 99 80
\pinlabel $a$ [l] at 99 46
\pinlabel $b$ [l] at 99 35
\pinlabel $c$ [l] at 99 24
\pinlabel $c$ at 197 102
\pinlabel $b$ at 168 47
\pinlabel $a$ at 140 103
\endlabellist
\begin{center}
\includegraphics{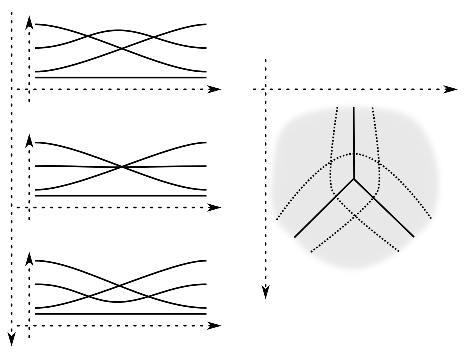}
\caption{Three $1$--folds $a$, $b$ and $c$ cancelling a $0$--fold in two parameters.}
\label{F:TripleCover}
\end{center}
\end{figure}
\begin{figure}[ht!]
\labellist
\small\hair 2pt
\pinlabel $0$ [t] at 60 193 
\pinlabel $a$ [l] at 95 214
\pinlabel $c$ [r] at 23 214 
\pinlabel $b$ [l] at 95 198
\pinlabel $a$ [l] at 95 157
\pinlabel $c$ [r] at 23 157 
\pinlabel $b$ [l] at 95 145
\pinlabel $a$ [l] at 95 100
\pinlabel $c$ [r] at 23 100 
\pinlabel $b$ [l] at 95 91
\pinlabel $a$ [l] at 95 43
\pinlabel $c$ [r] at 23 43 
\pinlabel $b$ [l] at 95 34
\pinlabel $b$ [r] at 23 34
\pinlabel $t$ [l] at 106 184
\pinlabel $z$ [b] at 14 218
\pinlabel $s$ [t] at 5 6
\pinlabel $s$ [t] at 127 57
\pinlabel $t$ [l] at 219 157
\pinlabel $ac$ at 176 133
\pinlabel $ab$ at 145 100
\pinlabel $bc$ at 185 92
\endlabellist
\begin{center}
\includegraphics{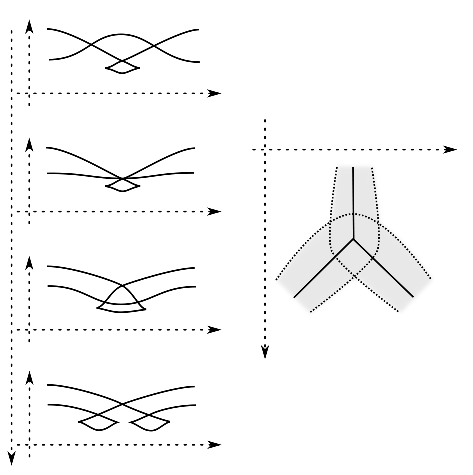}
\caption{After cancelling the $0$--$1$ pairs away from the overlapping regions.}
\label{F:FirstCancellation}
\end{center}
\end{figure}
\begin{figure}[ht!]
\labellist
\small\hair 2pt
\pinlabel {\fbox{$1$}} at 188 232
\pinlabel {\fbox{$2$}} at 206 232
\pinlabel {\fbox{$3$}} at 196 187
\pinlabel {\fbox{$4$}} at 147 174
\pinlabel {\fbox{$5$}} at 249 174
\pinlabel {\fbox{$6$}} at 177 150
\pinlabel {\fbox{$7$}} at 220 150
\pinlabel {\fbox{$8$}} at 171 112
\pinlabel {\fbox{$9$}} at 224 112
\pinlabel {\fbox{$1$}} at  48 335
\pinlabel {\fbox{$2$}} at  65 335
\pinlabel {\fbox{$3$}} at  59 277
\pinlabel {\fbox{$4$}} at  36 220
\pinlabel {\fbox{$5$}} at  80 220
\pinlabel {\fbox{$6$}} at  47 108
\pinlabel {\fbox{$7$}} at  72 108
\pinlabel {\fbox{$8$}} at  42 51 
\pinlabel {\fbox{$9$}} at  76 51 
\pinlabel $c$ [r] at 24 329
\pinlabel $a$ [l] at 95 329
\pinlabel $b$ [l] at 95 313
\pinlabel $c$ [r] at 24 272
\pinlabel $a$ [l] at 95 272
\pinlabel $b$ [l] at 95 255
\pinlabel $c$ [r] at 24 101
\pinlabel $b$ [r] at 24 91
\pinlabel $a$ [l] at 95 101
\pinlabel $b$ [l] at 95 91
\pinlabel $c$ [r] at 24 44 
\pinlabel $b$ [r] at 24 34
\pinlabel $a$ [l] at 95 44 
\pinlabel $b$ [l] at 95 34
\pinlabel $a$ [tr] at 42 29
\pinlabel $c$ [tl] at 77 29

\endlabellist
\begin{center}
\includegraphics{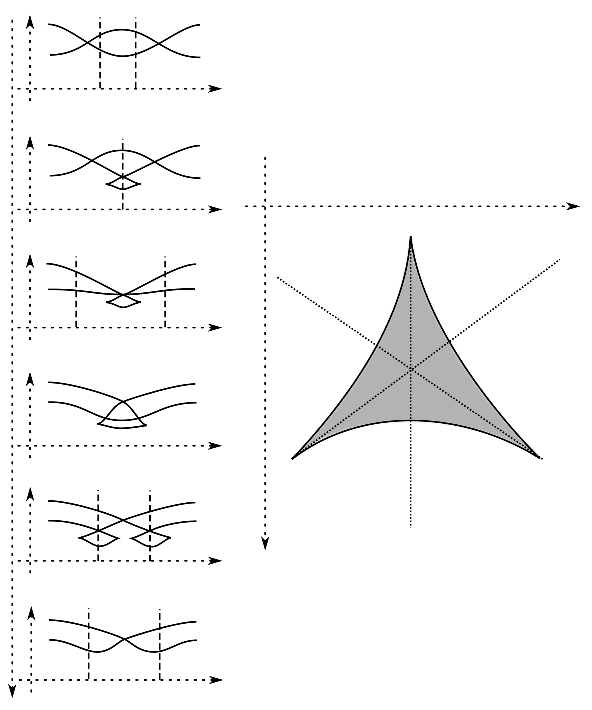}
\caption{The $2$--parameter Cerf graphic after cancelling along the double overlaps but not the triple overlaps. The dotted lines in the figure on the right indicate the points where two $1$--folds intersect. Note that the labelling of the $1$--folds $a$, $b$ and $c$ is no longer consistent because, as one moves across the bottom Cerf graphic, $c$ becomes $b$ and $b$ becomes $a$, and as one moves across the top Cerf graphic, $c$ becomes $a$.}
\label{F:CerfBelowMonkey}
\end{center}
\end{figure}

Because there are three $1$--handles cancelling the $0$--handle across this triangle, we can construct a local model in which we separate out two local coordinates in the domain in which the cancellations occur and keep the other coordinates as a sum of squares independent of the parameters. In other words, locally $g_{s,t}$ is given by $g_{s,t}(x_1,x_2,x_3, \ldots, x_m) = h_{s,t}(x_1,x_2) + x_3^2 + \ldots + x_m^2$. In Figure~\ref{F:BelowMonkeyHandles} we schematically illustrate the handle decomposition corresponding to $h_{s,t}$ at representative points in the $(s,t)$--parameter space, as labelled by boxed numbers in Figure~\ref{F:CerfBelowMonkey}. In~\cite{HatcherWagoner}, page~202, this is shown to be precisely the southern hemisphere of the $S^2$--boundary of a $B^3$ space of deformations of the monkey saddle $h(x_1,x_2) = x_1^3 - 3 x_1 x_2^2$. The south pole is visualized as pushing down in the middle of the monkey saddle to create an index $0$ critical point with three cancelling index $1$ critical 
points, while the equator is a loop of Morse functions involving two index $1$ critical points that slide over each other three times. To eliminate this $0$--fold completely, we simply replace the southern hemisphere of this $S^2$ with the northern hemisphere, which involves pushing the middle of the monkey saddle upwards to create an index $2$ critical point. We replace the triangular $0$--fold surface with a triangular $2$--fold surface, exactly as in~\cite{HatcherWagoner}, page~198, item~(iv) and Lemma~4.1. (This is where it is important that $m \geq 3$, so that index $2$ is indefinite.)
\begin{figure}[ht!]
\labellist
\small\hair 2pt
\pinlabel {\fbox{$1$}} at  91 229
\pinlabel {\fbox{$2$}} at 192 229
\pinlabel {\fbox{$3$}} [b] at 141 202
\pinlabel {\fbox{$4$}} at  41 143
\pinlabel {\fbox{$5$}} at 240 143
\pinlabel {\fbox{$6$}} [tl] at 118 116
\pinlabel {\fbox{$7$}} [tr] at 165 116
\pinlabel {\fbox{$8$}} at  91  59
\pinlabel {\fbox{$9$}} at 192  59
\pinlabel $c$ at 92 240
\pinlabel $b$ at 103 222
\pinlabel $a$ at 191 240
\pinlabel $b$ at 182 222
\pinlabel $c$ at 133 197
\pinlabel $a$ at 150 197
\pinlabel $b$ at 142 177
\pinlabel $c$ at 42 154
\pinlabel $b$ at 52 135
\pinlabel $a$ at 240 154
\pinlabel $b$ at 231 135
\pinlabel $c$ at 98 125
\pinlabel $a$ at 119 122
\pinlabel $b$ at 111 108
\pinlabel $a$ at 187 125
\pinlabel $c$ at 163 122
\pinlabel $b$ at 172 108
\pinlabel $c$ at 81 50
\pinlabel $a$ at 102 50
\pinlabel $c$ at 181 50
\pinlabel $a$ at 202 50
\endlabellist
\begin{center}
\includegraphics{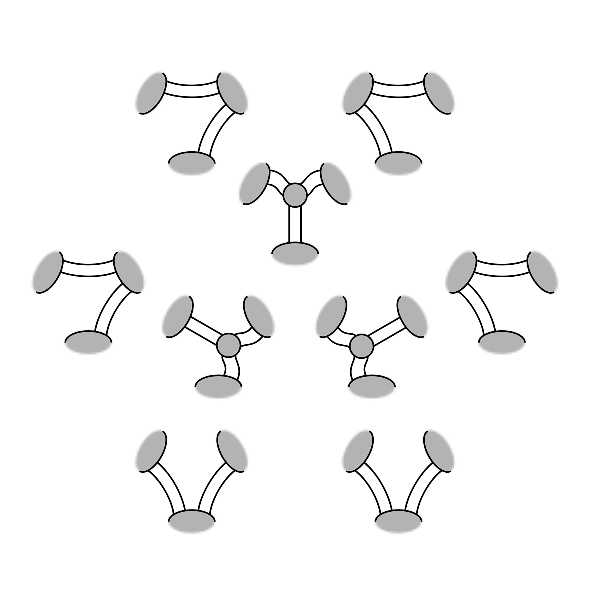}
\caption{Handle decompositions at various points in the Cerf graphic in Figure~\ref{F:CerfBelowMonkey}.}
\label{F:BelowMonkeyHandles}
\end{center}
\end{figure}

Do this to each component of $\Sigma$ and, upside down, do the same to the index $m$ critical points and we have an indefinite generic homotopy between homotopies.

\label{P:AlmostOrdering}
Getting $g_{s,t}$ to be almost ordered when $g_{0,t}$ and $g_{1,t}$ are ordered follows the same argument as for a fixed Morse function or a homotopy between Morse functions, except that we now note that for a generic $2$--parameter family of gradient-like vector fields the descending manifold for an index $j$ critical point may meet the ascending manifold for an index $j+1$ critical point at isolated points $(s,t)$ in the $2$--dimensional parameter space. This is why we can at most ask that, if an index $k$ critical point is above an index $j$ critical point, then $j \leq k+1$. As noted earlier, when $F_0$ and $F_1$ are connected and $m \geq 3$ then ordered implies connected level sets. In fact, for connectedness of level sets we simply need that all index $m-1$ critical points are above all index $1$ critical points, and thus almost ordered implies connected level sets when $m \geq 4$. So, if $m \geq 4$, the proof is complete.

For the remainder of the proof, we assume $m=3$, in which case level sets are $2$--dimensional. 

There are no definite folds, so the only way for a level set to become disconnected is by adding a $2$--handle $H_2$ to a separating circle $C$ in a lower level set. Then this level set would remain disconnected all the way to the top of the Morse function unless a higher $1$--handle $H_1$ is attached to the different components of the level set. In the $0$-- and $1$--parameter cases, we can always arrange that the $1$--handle is added below the $2$--handle in which case the attaching circle $C$ of the $2$--handle  does not separate. But in the $2$--parameter case, the attaching $0$--sphere of the $1$--handle $H_1$ may go over the $2$--handle $H_2$ and not be able to be pulled off. This is illustrated in Figure~\ref{F:2ParameterSlide}, where one foot of the $1$--handle moves around $C_1$ and over the $2$--handle $H_2$, as the parameter runs over a $2$--disk $D$. At the center of $D$, the foot is stuck at the critical point of $H_2$.
 \begin{figure}[ht!]
\labellist
\small\hair 2pt
\pinlabel $C$ [r] at 87 21
\pinlabel $C_1$ [l] at 109 18
\pinlabel {fiber} [l] at 127 7 
\endlabellist
\begin{center}
\includegraphics{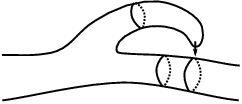}
\caption{A $1$--handle sliding over a $2$--handle in a $2$--parameter family.}
\label{F:2ParameterSlide}
\end{center}
\end{figure}

However, it is important to note that this problem occurs at isolated points, so we can pull the $1$--handles below the $2$--handles everywhere except at small disks exactly like this disk $D$.

To deal with the disconnectedness of the level set over the middle of $D$, above the $2$--handle $H_2$ and below the $1$--handle $H_1$, we need a ``helper'' $1$--handle $H_1'$, so create a $1$--$2$ cancelling pair over the entire disk $D$ in which the helper $1$--handle $H_1'$ is attached to a parallel copy $C'$ of $C$ and over $H_2$, and $H_2'$ is a ``tunnel'' between $H_1$ and $H_1'$. See Figure~\ref{F:2ParameterSlideHelper}. Now perturb $H'_1$ so that the value of the parameter in $D$ at which $H'_1$ hits the critical point of $H_2$ is different from $0 \in D$, say $d \in D$. Now the fibers are connected except for disjoint disk neighborhoods of $0$ and $d$ of radius less than $||d||/3$. However, in the disk around $0 \in D$, the helper $1$--handle can be pulled below $H_2$, so that $H_2$ does not disconnect and then $H_1$ is attached, and lastly $H_2'$ which obviously does not disconnect. By symmetry, the level sets can also be made connected in the disk around $d \in D$.
\begin{figure}[ht!]
\labellist
\tiny\hair 2pt
\pinlabel $C$ [r] at 74 17
\pinlabel $C'$ [l] at 89 15
\pinlabel $H_1$ at 68 33
\pinlabel $H_1'$ at 74 46
\pinlabel $H_2'$ at 83 34
\endlabellist
\begin{center}
\includegraphics[scale=2]{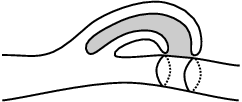}
\caption{The helper $1$--handle $H_1'$, with cancelling $2$--handle $H_2'$.}
\label{F:2ParameterSlideHelper}
\end{center}
\end{figure}

\end{proof}

We end this section with a lemma, of a slightly different flavor, that will be used several times in the following sections to relate connectedness of level sets to the property of being ordered. In particular, Theorem~\ref{T:MorseDeformationExistence} requires, as a hypothesis, that the given Morse functions be ordered. The following lemma is used to prepare for this theorem. We have already stated that ordered $I$--valued Morse functions have connected level sets, and the converse is obviously not true. However, we do have:

\begin{lemma} \label{L:Connected2Ordered}
 Let $g : M \to I$ be an indefinite Morse function which is standard with respect to each $\phi_i$ at height $z_i$, and suppose that all level sets of $g$ are connected. Then there exists an indefinite generic homotopy $g_t : M \to I$ between Morse functions, with $g_0 = g$, such that $g_1$ is ordered, all level sets of $g_t$ are connected for all $t$, and such that each $g_t$ is standard with respect to each $\phi_i$ at height $z_i$, with the critical values of $g_1$ also ordered with respect to the heights $z_i$ as in the statement of Theorem~\ref{T:MorseExistence}. 
\end{lemma}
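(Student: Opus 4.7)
The plan is to realize the desired homotopy by a pure ``bubble-sort'' reordering of the critical values of $g$, using only $1$--parameter crossings of critical values and height adjustments --- no births or deaths are needed. Having chosen a generic gradient-like vector field for $g$, each adjacent unordered pair --- a critical point $p$ of index $j$ immediately below a critical point $q$ of index $k$ with $j>k$ --- is interchanged via the standard critical-value-crossing homotopy already used in the proof of Theorem~\ref{T:MorseExistence} (noting that a generic vector field makes the descending disk of $q$ and the ascending disk of $p$ disjoint, since $\dim D_q + \dim A_p = k + (m-j) < m$). This introduces no new critical points and strictly decreases the number of inversions, so the process terminates at a Morse function ordered by index. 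A final sequence of height adjustments then relocates the critical values into the desired positions relative to the $z_i$.

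The crucial observation, which automatically propagates the connectedness hypothesis through every swap, is a codimension count on handle attachments. Outside a small height-window around a swap, every level set is diffeomorphic to the corresponding level set of $g$ and hence connected. The only level set that changes during the swap is the thin sliver between the two critical values: beforehand it is $F \cup (p\text{-handle})$, connected by hypothesis on $g$; afterward it is $F \cup (q\text{-handle})$, where $F$ denotes the level set just below both critical points. Attaching a $k$--handle to a connected $(m-1)$--manifold $F$ yields a connected result whenever $k \leq m-2$: the attaching sphere $S^{k-1}$ has codimension $m-k \geq 2$ in $F$, so removing its tubular neighborhood keeps $F$ connected, after which the connected piece $D^k \times S^{m-k-1}$ is glued along part of the resulting boundary. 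The only handle index that could disconnect a level set is $k=m-1$; but indefiniteness gives $j \leq m-1$, so $k < j \leq m-1$ forces $k \leq m-2$, excluding this case.

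Standardness of each $g_t$ with respect to $\phi_i$ at height $z_i$ is preserved by the same dimension-counting argument used at the end of the proof of Theorem~\ref{T:MorseExistence}: since $l_i < m/2$, a generic $1$--parameter family of gradient-like vector fields arranges that, in each swap, either the descending disk of the higher critical point or the ascending disk of the lower misses $\phi_i(L_i)$ inside the intermediate level set, so the supporting homotopy can be chosen to equal the identity on a neighborhood of $\phi_i([-\epsilon,\epsilon] \times B^{m-1-l_i} \times L_i)$. The final height adjustments relative to the $z_i$ preserve the index-monotonic order, so they consist only of crossings of critical values with the regular heights $z_i$, again supported away from $\phi_i(L_i)$ by the same count; since such adjustments do not alter the handle-attachment order, they do not disturb connectedness of level sets. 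The only real obstacle one might have feared --- disconnection by an $(m-1)$--handle attachment during a swap --- is precisely what indefiniteness forbids, since any higher-index handle to be moved above it would have to be an $m$--handle, which does not exist.
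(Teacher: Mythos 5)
Your proof is correct and takes essentially the same approach as the paper's: both boil down to the codimension observation that, since indefiniteness caps indices at $m-1$, the only handle that needs to be pushed below another has index $\leq m-2$, so no level set can become disconnected during the reordering. The paper phrases this as ``an $(m-1)$-handle's attaching sphere can only become separating if another $(m-1)$-handle is attached below it, which never happens,'' whereas you directly verify that the one new sliver $F \cup (q\text{-handle})$ stays connected, but the two are equivalent and neither is a fundamentally different argument.
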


\begin{proof}
 In the process of ordering the critical points, we just need to check that we do not create disconnected level sets. Disconnected level sets only come from $(m-1)$--handles attached along separating spheres. We will never be moving $(m-1)$--handles below other handles in the ordering process. If the attaching sphere for an $(m-1)$--handle $H$ is nonseparating, it can only be made separating by attaching another $(m-1)$--handle below $H$. Thus we never need to create disconnected level sets while ordering if all level sets were connected to begin with.
\end{proof}

For the sake of completeness, we could also state a parameterized version, in which one homotopes a homotopy with connected level sets to an ordered homotopy of homotopies. This would be used as preparation for applying Theorem~\ref{T:MorseDefOfDefsExistence}. However, we will only need such a lemma at two points (in the proofs of Theorem~\ref{T:SquareUniqueness} and Lemma~\ref{L:HorizCritSwitch}) so we will prove it when we need it.

\section{Theorems about $I^2$--valued Morse $2$--functions on cobordisms
between cobordisms}

Throughout this section let $X$ be an oriented connected $n$--dimensional cobordism from $M_0$ to $M_1$, where each $M_i$ is a nonempty oriented $(n-1)$--dimensional cobordism from $F_{i0}$ to $F_{i1}$, with $F_{00} \cong F_{10}$ and $F_{01} \cong F_{11}$, with each $F_{ij}$ oriented, closed and nonempty. Recall that this means that $\partial X$ is equipped with a fixed identification with $-M_0 \cup (I \times F_{00}) \cup (I \times (-F_{01})) \cup M_1$, with $F_{0j} \subset M_0$ identified with $\{0\} \times F_{0j}$ and $F_{1j} \subset M_1$ identified with $\{1\} \times F_{0j}$ (see Figure~\ref{F:Intro}). Also suppose that we are given indefinite, ordered Morse functions $\zeta_0 : M_0 \to I$ and $\zeta_1 : M_1 \to I$. On $I^2$ we use coordinates $(t,z)$, thinking of $t$ as horizontal and $z$ as vertical. 

Note that we are not at this point assuming that the $M_i$'s or the $F_{ij}$'s are connected. However, the assumption that each $\zeta_i$ is indefinite implies that each component of $M_i$ is a connected cobordism between nonempty components of the $F_{ij}$'s.

Our goal in this section is to prove the following two theorems, an existence theorem and a uniqueness theorem for square Morse $2$--functions:

\begin{theorem} \label{T:SquareExistence}
Suppose that $n \geq 3$. Given any indefinite, ordered Morse function $\tau : X \to I$ which is projection to $I$ on $I \times F_{00}$ and $I \times (-F_{01})$ in $\partial X$, there exists an indefinite square Morse $2$--function $G : X \to I^2$ such that $z \circ G|_{M_0} = \zeta_0$, $z \circ G|_{M_1} = \zeta_1$ and $t \circ G = \tau$. If $n \geq 4$ and each $M_i$ and $F_{ij}$ is connected, then we can arrange that $G$ is fiber-connected.
\end{theorem}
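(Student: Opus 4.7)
The plan is to use the ordered, indefinite function $\tau$ itself as the $t$-projection of $G$, building $G$ slab-by-slab. Let $0 < c_1 < \ldots < c_N < 1$ be the critical values of $\tau$, with critical points $p_j$ of indices $k_j \in \{1, \ldots, n-1\}$; choose $\delta > 0$ small enough that the intervals $[c_j - \delta, c_j + \delta]$ are disjoint in $(0,1)$, splitting $X$ into \emph{critical slabs} $X_j = \tau^{-1}([c_j - \delta, c_j + \delta])$ and \emph{regular slabs} $Y_j$ between them (including the end slabs meeting $M_0$ and $M_1$). On each $Y_j$, $\tau$ is a submersion with $Y_j \cong [a,b] \times M$, so building $G$ there amounts to producing a generic homotopy of $I$--valued Morse functions on $M$ between prescribed endpoints, which is precisely the content of Theorem~\ref{T:MorseDeformationExistence}. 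The work is on the critical slabs, where we must use the local model of $\tau$ near $p_j$ together with the ``attaching sphere in a level set'' machinery of Theorem~\ref{T:MorseExistence}.

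Fix a critical slab $X_j$. Take standard Morse coordinates near $p_j$ so that $\tau = -|\mathbf{y}_-|^2 + |\mathbf{y}_+'|^2 + x_n^2$, with descending $(k_j-1)$--sphere $S_j^- \subset M_j^- := \tau^{-1}(c_j-\delta)$ and ascending $(n-k_j-1)$--sphere $S_j^+ \subset M_j^+ := \tau^{-1}(c_j+\delta)$. Since $\min(k_j-1, n-k_j-1) \le (n-2)/2 < (n-1)/2$, at least one of these spheres has dimension strictly less than half the dimension of the cross-section, and the standard Morse coordinates supply a tubular neighborhood embedding $\phi_j$. Applying Theorem~\ref{T:MorseExistence} with this data, say on $M_j^-$, produces an indefinite, ordered Morse function $\zeta_j^- : M_j^- \to I$ that is standard with respect to $\phi_j$ at some chosen height $z_j \in (0,1)$. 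Inside the Morse coordinate ball, define $\zeta$ as $z_j$ plus a positive-square coordinate (e.g.\ $\zeta = z_j + x_n$); a direct computation shows that $G = (\tau, \zeta)$ has a single fold arc through $p_j$ of indefinite index $k_j$, imaged in the Cerf graphic as a parabola tangent to the vertical line $\{t = c_j\}$ at $(c_j, z_j)$. When $k_j = n-1$ this choice would give a definite $(n-1)$--fold, so we dualize: work with $S_j^+$ and a negative-square coordinate for $\zeta$, yielding a fold of index $n-2$. Outside the coordinate ball, extend $\zeta$ to $X_j$ as a $\tau$--flow invariant function agreeing with $\zeta_j^-$ on $M_j^-$; the ``standard with respect to $\phi_j$'' clause of Theorem~\ref{T:MorseExistence} is exactly what makes the local and flow-extended definitions glue smoothly. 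Restricting to $M_j^+$ yields $\zeta_j^+$, which equals $\zeta_j^-$ away from the handle region and acquires two new critical points of indices $k_j$ and $n-k_j-1$ inside the handle; both lie in $\{1,\ldots,n-2\}$, so $\zeta_j^+$ is indefinite (though possibly not ordered).

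On each regular slab $Y_j$ we then have indefinite Morse functions at both ends: $\zeta_j^+$ below and $\zeta_{j+1}^-$ above (with $\zeta_0$ and $\zeta_1$ playing the endpoint roles on $Y_0$ and $Y_N$). A preliminary generic homotopy --- the ordering argument from the proof of Theorem~\ref{T:MorseExistence} --- reorders $\zeta_j^+$ while preserving indefiniteness (and, via Lemma~\ref{L:Connected2Ordered}, connectedness of level sets in the fiber-connected case); then Theorem~\ref{T:MorseDeformationExistence} supplies an indefinite, ordered generic homotopy from the reordered $\zeta_j^+$ to $\zeta_{j+1}^-$, which defines $G$ on $Y_j$ via $G(t, p) = (t, g_t(p))$. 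Concatenating over all slabs produces a square Morse $2$--function $G : X \to I^2$ with $t \circ G = \tau$ and the prescribed restrictions on $M_0, M_1$, indefinite by construction. For fiber-connectedness when $n \ge 4$ and all $M_i, F_{ij}$ are connected, apply the lemma at the end of Section~2 twice: to $\tau$ itself, to conclude that the cross-sections $\tau^{-1}(t)$ are connected, and to each slice function $\zeta_t$ (indefinite and ordered wherever Morse, with connected boundary pieces $F_{00}, F_{01}$), to conclude connected level sets, so that fibers of $G$ are connected. The main obstacle is the critical-slab step: guaranteeing that the choice of $\zeta$--direction produces an indefinite fold of $G$ and that the inherited upper-slice function $\zeta_j^+$ is still indefinite. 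This reduces entirely to the dimension inequality $l_i < m/2$ in Theorem~\ref{T:MorseExistence} and the case analysis on $k_j$ described above, which is the formal counterpart of the ``resolving crossings'' picture (Figure~\ref{F:ResolvingCrossings}) from the introduction.
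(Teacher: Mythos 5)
Your proposal is in essence the same argument as the paper's: decompose $X$ along the critical values of $\tau$, use the local Morse model at each critical point of $\tau$ to define $G$ via the formula $(\mu_k,\pm x_j)$ on the handle, invoke Theorem~\ref{T:MorseExistence} to place the attaching (or coattaching) sphere in a level set of the slice Morse function, fill in the regular slabs with Theorem~\ref{T:MorseDeformationExistence}, and use Lemma~\ref{L:Connected2Ordered} plus the Section~2 lemma on ordered/indefinite Morse functions for fiber-connectedness. The one organizational difference is that the paper first reduces to a single $\tau$--critical point and then replaces $\tau$ by $1-\tau$ whenever $k>n/2$, so that it only ever needs the forward handle model $\Gamma_k^+$ with $k\le n/2$; you instead keep all critical points and dualize locally to a backward handle $\Gamma_k^-$ when the descending sphere is too large. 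These are equivalent; note, though, that the need to use the backward model is forced as soon as $k_j\ge(n+1)/2$ (by the dimension bound $l_i<m/2$), not only when $k_j=n-1$, so the phrase ``only when $k_j=n-1$'' in your write-up understates when the choice of side matters.

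The one place where you have glossed over something the paper is careful about is the fiber-connectedness claim. To invoke Lemma~\ref{L:Connected2Ordered} on the unordered function $\zeta_j^+$ emerging from a critical slab, you need $\zeta_j^+$ to already have connected level sets, and this is not automatic. After attaching the handle, $\zeta_j^+$ has a new critical point of index $k_j$ sitting below one of index $n-1-k_j$, and when $k_j=n-2$ the attaching sphere of that new handle has codimension one in the level set and may separate it. The paper isolates the only genuinely dangerous case ($n=4$, $k=2$) and explicitly arranges, in the application of Theorem~\ref{T:MorseExistence}, that the attaching $S^1$ lies nonseparating in its level set; without this extra clause your appeal to Lemma~\ref{L:Connected2Ordered} does not get off the ground. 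This is a small but real omission; once you add the nonseparating requirement (and observe, as the paper does, that the $k_j=1$ case is harmless because a $1$--handle followed by its dual $(n-2)$--handle cannot disconnect a connected fiber), the argument is complete.
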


\begin{theorem} \label{T:SquareUniqueness}
 Suppose that $n \geq 4$. Given two indefinite square Morse $2$--functions $G_0, G_1 : X \to I^2$ with $z \circ G_0 |_{M_0} = z \circ G_1 |_{M_0} = \zeta_0$ and $z \circ G_0 |_{M_1} = z \circ G_1 |_{M_1} = \zeta_1$, and such that both $\tau_0 = t \circ G_0$ and $\tau_1=t \circ G_1$ are ordered and indefinite, there exists an indefinite generic homotopy $G_s : X \to I^2$ between $G_0$ and $G_1$ such that $\tau_s = t \circ G_s$ is ordered (and $\tau_s$ restricted to $I \times F_{0i}$ is projection onto $I$, as in Definition~\ref{D:SquareM2F}). If in addition $G_0$ and $G_1$ are fiber-connected then we can arrange that $G_s$ is fiber-connected.
\end{theorem}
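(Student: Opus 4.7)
The plan follows the outline sketched in the introduction: first reduce to the case $\tau_0 = \tau_1$ by a preliminary homotopy that modifies the horizontal projection $t \circ G$, then interpolate the vertical data slice-by-slice in $\tau$.

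\emph{Step 1: equalizing the horizontal projections.} Apply Theorem~\ref{T:MorseDeformationExistence} to the indefinite ordered Morse functions $\tau_0, \tau_1 : X \to I$ (which already agree on the sides of $X$) to get an indefinite ordered generic homotopy $\tau_s$ between them, arranged so that births occur first, then critical-value crossings, then deaths. I would realize this homotopy as $\tau_s = t \circ G_s$ for $s \in [0,\tfrac14] \cup [\tfrac34,1]$ by local modifications of the Cerf graphic of $G$: each $\tau$-birth at time $s_*$ is produced by introducing a vertically tangent indefinite ``eye'' into the Cerf graphic and then kinking one of its cusps, which creates exactly two new vertical tangencies—equivalently, a cancelling pair of critical points of $t \circ G$ of successive indices (see Figure~\ref{F:SquareDefFromTauBirth}). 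A crossing of $\tau_s$ is realized by a horizontal translation of vertical tangencies in the Cerf graphic, which is a generic 1-parameter move of $G$. Deaths are handled dually for $s \in [\tfrac34,1]$. All these moves preserve indefiniteness and, when the hypothesis holds, can be arranged to preserve fiber-connectedness. After this step, $G_{1/4}$ and $G_{3/4}$ are indefinite square Morse 2-functions with $t \circ G_{1/4} = t \circ G_{3/4} =: \tau$.

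\emph{Step 2: homotoping with $\tau$ fixed.} For $s \in [\tfrac14, \tfrac34]$ I construct $G_s$ with $t \circ G_s \equiv \tau$. Choose regular values $0 = a_0 < a_1 < \cdots < a_{k+1} = 1$ of $\tau$ separating its critical values, and on each slab $X_j = \tau^{-1}([a_{j-1}, a_j])$ free of $\tau$-critical points fix a trivialization $X_j \cong [a_{j-1}, a_j] \times N_j$ using a gradient-like vector field. Over such a slab, both $G_{1/4}$ and $G_{3/4}$ restrict to generic 1-parameter homotopies (in $t$) of indefinite Morse functions $N_j \to I$. Since $\dim N_j = n - 1 \geq 3$, Theorem~\ref{T:MorseDefOfDefsExistence} provides an indefinite almost ordered generic homotopy of homotopies $g^s_t$ interpolating between them with fixed endpoints at $t = a_{j-1}, a_j$. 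At each critical point of $\tau$ of index $k$, the attaching $(k{-}1)$-sphere of the corresponding $X$-handle must continue to lie in a level set of the vertical Morse function on the slice just before the critical value of $\tau$: this is precisely the ``standard with respect to $\phi_i$'' condition of Section~4, now imposed in the 2-parameter setting. Gluing the resulting 2-parameter families across the $a_j$'s yields $G_s$ for $s \in [\tfrac14, \tfrac34]$.

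\emph{Main obstacle and fiber-connectedness.} The crucial technical point is the constrained version of Theorem~\ref{T:MorseDefOfDefsExistence} required at critical points of $\tau$, flagged in the remark after that theorem but not explicitly proved. The dimension count is the same as in Section~4: ascending/descending manifolds of the index $\geq 2$ critical points of the slice Morse function, in a 2-parameter family inside an $(n{-}1)$-manifold, must avoid submanifolds of dimension $l_i = k-1$; replacing $\tau$ by $1 - \tau$ when convenient reduces this to $l_i < (n-1)/2$, which holds for $n \geq 4$, with the middle-parity case settled exactly as in the final paragraphs of the proof of Theorem~\ref{T:MorseDefOfDefsExistence}. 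For fiber-connectedness, the eye-plus-kink moves of Step~1 can be done without disconnecting fibers, and in Step~2 the almost ordered output of Theorem~\ref{T:MorseDefOfDefsExistence} gives connected slice level sets automatically when $n \geq 5$; the $n = 4$ case uses the helper $1$--$2$ pair construction at the end of the proof of Theorem~\ref{T:MorseDefOfDefsExistence}. Fiber-connectedness on slabs then extends across critical values of $\tau$ by compatibility with the handle attachment.
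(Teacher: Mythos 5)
Your overall two-stage architecture matches the paper's: first realize an indefinite ordered homotopy $\tau_s$ of the horizontal Morse function via $G_s$ on $[0,\tfrac14]\cup[\tfrac34,1]$, then interpolate on $[\tfrac14,\tfrac34]$ keeping $t\circ G_s$ fixed. But both stages gloss over steps that the paper has to work for, and the second has a genuine gap as written.

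\textbf{Step 1.} Realizing a crossing of critical values of $\tau$ is not merely ``a horizontal translation of vertical tangencies.'' To slide the forward $k$--handle for $p$ leftward past the backward (or forward) $l$--handle for $q$, you must first arrange that the descending $(k{-}1)$--sphere for $p$ lies in a level set of the vertical Morse function \emph{at every} $t$ between $\tau(q)$ and $\tau(p)$, and with a standard tubular neighborhood. This is Lemma~\ref{L:HorizCritSwitch}, whose proof goes through Lemmas~\ref{L:LocalSquareCritPtModel},~\ref{L:ForwardToBackward} and~\ref{L:ConstantOutsideHandle} and then applies Theorems~\ref{T:MorseExistence},~\ref{T:MorseDeformationExistence} and~\ref{T:MorseDefOfDefsExistence} with the $\phi_i$-standard constraints; it also needs a separate argument in the $n=4$, $k=2$ case to keep the attaching circle nonseparating. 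Your treatment names none of this.

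\textbf{Step 2.} The more serious issue: Theorem~\ref{T:MorseDefOfDefsExistence} has a \emph{fixed endpoints} hypothesis --- the two generic homotopies $g_{0,t}$ and $g_{1,t}$ must satisfy $g_{0,0}=g_{1,0}$ and $g_{0,1}=g_{1,1}$. When you restrict $G_{1/4}$ and $G_{3/4}$ to a slab $\tau^{-1}([a_{j-1},a_j])$, there is no reason for their restrictions to the boundary slices $\tau^{-1}(a_{j-1})$, $\tau^{-1}(a_j)$ to coincide, so the theorem does not apply as stated, and your ``gluing $2$--parameter families across the $a_j$'s'' is not available. The paper avoids this by inserting a whole intermediate step: Lemma~\ref{L:SquareDefNearCritPoints} (built on Lemmas~\ref{L:LocalSquareCritPtModel} and~\ref{L:ConstantOutsideHandle} and the $1$--parameter Theorem~\ref{T:MorseDeformationExistence}) homotopes $G_{1/4}$ so that it agrees with $G_{3/4}$ on collars $\tau^{-1}([t_*-\epsilon,t_*+\epsilon])$ of every critical value. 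Only then are the remaining product regions between critical points handled by the \emph{unconstrained} Theorem~\ref{T:MorseDefOfDefsExistence}, now with honestly matching endpoints. Your appeal to a $\phi_i$-constrained $2$--parameter version is both unnecessary under the paper's decomposition and, as the paper remarks after Theorem~\ref{T:MorseDefOfDefsExistence}, never proved; moreover the dimension count you sketch for it is not as clean as claimed (the $2$--parameter version would need $l_i<(m-1)/2$, which fails for the attaching sphere of a middle-index critical point of $\tau$ when $n$ is even). You should replace the constrained-interpolation plan with the paper's normalization-near-critical-values step.
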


Note that in the preceding section we have already proved these two theorems in the special case that $X = [0,1] \times M$, $M_0 = \{0\} \times M$, $M_1 = \{1\} \times M$, and $\tau = \tau_0 = \tau_1$ is projection to $[0,1]$. This is because a generic homotopy $g_t : M \to I$ between Morse functions $g_0, g_1 : M \to I$ gives a square Morse $2$--function $G : X \to I^2$ defined by $G(t,p) = (t,g_t(p))$, with $z \circ G|_{M_i} = g_i$. The key difference between a general square Morse $2$--function and one coming from a generic homotopy between Morse functions is that the ``Cerf graphic'' for a general square Morse $2$--function, i.e. the image $G(Z_G) \subset I^2$ of the critical point set, may have vertical tangencies. These vertical tangencies correspond precisely to critical points of the horizontal Morse function $t \circ G : X \to I$. In the absence of such vertical tangencies, the horizontal Morse function is trivial and hence $X$ is a product. We exploit these ideas repeatedly in the following proofs.

Before working on the proofs, we spend some time understanding neighborhoods of these critical points. In other words, when $t \circ G : X \to I$ has a critical point at $p \in X$, what can we say about $G$ near $p$? We first construct two local models which are illustrated in Figure~\ref{F:Morse2FcnHandles} (recall that $\mu_k^n(\mathbf{x}) = \mu_k^n(x_1, \ldots, x_n) = -x_1^2 - \ldots - x_k^2 + x_{k+1}^2 + \ldots + x_n^2$):
\begin{figure}[ht!]
\labellist
\small\hair 2pt
\pinlabel {$(\mu_k^n(\mathbf{x}), x_k) = \Gamma_k^-(\mathbf{x})$} [t] at 165 1
\pinlabel {$(\mu_k^n(\mathbf{x}), x_{k+1}) = \Gamma_k^+(\mathbf{x})$} [t] at 44 1
\tiny
\pinlabel $k$ [r] at 32 38
\pinlabel $n-1-k$ [l] at 41 45
\pinlabel $n-k$ [l] at 176 38
\pinlabel $k-1$ [r] at 168 45
\endlabellist
\begin{center}
\includegraphics{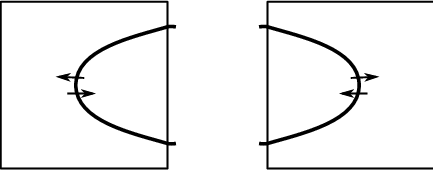}
\vspace*{10pt}
\caption{Local models for Morse critical points of index $k$ realized via Morse $2$--functions. On the left is a ``forward $k$--handle'', on the right is a ``backward $k$--handle''. Again, the arrows indicate the index of the fold in the given direction, not the index of the Morse critical point.}
\label{F:Morse2FcnHandles}
\end{center}
\end{figure}

\begin{enumerate}
 \item We call the following map a {\em forward index $k$ critical point}, or a {\em forward $k$--handle}: $\Gamma_k^+(x_1, \ldots, x_n) = (\mu_k^n(\mathbf{x}), x_{k+1})$. Note that this only makes sense for $0 \leq k \leq n-1$. By construction $t \circ \Gamma_k^+$ has a critical point of index $k$ at $0$. Reparametrizing the range by $(t,z) \mapsto (t-z^2,z)$ transforms $\Gamma_k^+$ to $(x_1, \ldots, x_n) \mapsto (\mu_k^{n-1}(x_1, \ldots, \widehat{x_{k+1}}, \ldots, x_n), x_{k+1})$, showing that $\Gamma_k^+$ really is a (local) Morse $2$--function with a single fold $Z_{\Gamma_k^+}$ along the $x_{k+1}$ axis and with the image $\Gamma_k^+(Z_{\Gamma_k^+})$ of this fold equal to the rightward-opening parabola $t=z^2$. The fold is indefinite when $0 < k < n-1$. 
 \item This map is a {\em backward index $k$ critical point}, or a {\em backward $k$--handle}: $\Gamma_k^-(x_1, \ldots, x_n) = (\mu_k^n(\mathbf{x}), x_{k})$. This only makes sense for $1 \leq k \leq n$. Again, by construction $t \circ \Gamma_k^-$ has a critical point of index $k$ at $0$. In this case, reparametrizing the range by $(t,z) \mapsto (t+z^2,z)$ transforms $\Gamma_k^-$ to $(x_1, \ldots, x_n) \mapsto (\mu_{k-1}^{n-1}(x_1, \ldots, \widehat{x_{k}}, \ldots, x_n), x_{k})$, showing that $\Gamma_k^-$ really is a (local) Morse $2$--function with a single fold $Z_{\Gamma_k^-}$ along the $x_{k}$ axis and with the image $\Gamma_k^-(Z_{\Gamma_k^-})$ of this fold equal to the leftward-opening parabola $t=-z^2$. This fold is indefinite when $1 < k < n$.
\end{enumerate}
Note that we could have defined $\Gamma_k^+$ by $\Gamma_k^+(\mathbf{x}) = (\mu_k^n(\mathbf{x}),\pm x_j)$ for any $j \in \{k+1, \ldots, n\}$ and it would still have all the properties listed, and in fact such a definition is equivalent to the one given up to a change of coordinates in the domain. Similarly we could define $\Gamma_k^-(\mathbf{x}) = (\mu_k^n(\mathbf{x}), \pm x_j)$ for any $j \in \{1, \ldots, k\}$. However, $\Gamma_k^+$ and $\Gamma_k^-$ are not in general related by a change of coordinates, even allowing a change of coordinates in the range, because the indices of the folds are different, as can be seen in Figure~\ref{F:Morse2FcnHandles}. If we turn a forward $k$--handle backwards, i.e. post-compose $\Gamma_k^+$ with the time-reversal $(t,z) \mapsto (-t,z)$, it becomes a backward $(n-k)$--handle.

Here are some further observations about forward $k$--handles, illustrated below in Figure~\ref{F:ForwardkHandle}; the reader can figure out the parallel statements for backward handles:
\begin{enumerate}
 \item When $0 < k \leq n-1$, the descending disk $\{x_{k+1} = \ldots = x_n = 0\}$ has image equal to the horizontal line $\{t \leq 0, z=0\}$. When $k=0$ there is, of course, no descending disk.
 \item When $0 \leq k < n-1$, the ascending disk $\{x_1 = \ldots = x_k = 0\}$ has image equal to the ``interior'' of the parabola $\{ t \geq z^2 \}$. (Of course, when $k=0$ the ascending disk is the whole domain of the function.) When $k=n-1$, the image of the ascending disk is just the parabola $\{ t = z^2 \}$. 
 \item For $0 < k \leq n-1$, the descending $(k-1)$--sphere $\{x_{k+1} = \ldots = x_n = 0, x_1^2 + \ldots + x_k^2 = R^2\}$ has image equal to the point $(-R^2,0)$.
 \item For $0 < k \leq n-1$, the attaching region for the $k$--handle, which we identify as the set $\{\mu_k^n(\mathbf{x})=-R^2, -\epsilon \leq x_{k+1} \leq \epsilon,  x_{k+2}^2 +  \ldots + x_n^2 \leq \epsilon^2\} \cong S^{k-1} \times [-\epsilon,\epsilon] \times B^{n-1-k}$, has image equal to the line segment $\{t = -R^2, -\epsilon \leq z \leq \epsilon\}$, and the map to this line segment is simply projection onto $[-\epsilon,\epsilon]$. This is where we first see the relevance of the conditions in the preceding section regarding constructing Morse functions which are standard with respect to embeddings of $[-\epsilon,\epsilon] \times B^{n-1-k} \times S^{k-1}$.
 \item For $k < n-1$, the ascending $(n-k-1)$--sphere $\{x_1 = \ldots = x_k = 0, x_{k+1}^2 + \ldots + x_n^2 =  R^2\}$ maps to the line $\{t=R^2\}$ via the standard Morse function on a sphere, with image equal to the interval $\{-R \leq z \leq R\}$. For $k=n-1$ the ascending sphere is two points mapping to $(R^2, \pm R)$.
\end{enumerate}
\begin{figure}[ht!]
\labellist
\small\hair 2pt
\pinlabel $z$ [b] at 134 183
\pinlabel $t$ [l] at 262 94
\pinlabel $\epsilon$ [bl] at 135 134
\pinlabel $-\epsilon$ [tl] at 135 54
\pinlabel $-R^2$ [br] at 54 94
\pinlabel $R^2$ [bl] at 217 94
\pinlabel {$\{\mathbf{0}\} \times B^{n-k}$} at 178 108
\pinlabel {$\{\mathbf{0}\} \times S^{n-k-1}$} [l] at 258 134
\pinlabel {$B^k \times \{\mathbf{0}\}$} [b] at 104 92
\pinlabel {$S^{k-1} \times \{\mathbf{0}\}$} [l] at 68 48
\pinlabel {$S^{k-1} \times [-\epsilon,\epsilon] \times B^{n-1-k}$} [b] at 60 140
\endlabellist
\begin{center}
\includegraphics{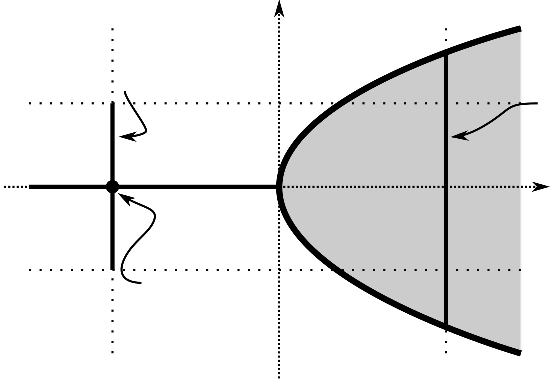}
\caption{Images of ascending and descending disks and spheres, and attaching region, for a forward $k$-handle $B^{k} \times B^{n-k}$. The parabola $t=z^2$ is the image of the fold.}
\label{F:ForwardkHandle}
\end{center}
\end{figure}

First we prove existence using these local models:
\begin{proof}[Proof of Theorem~\ref{T:SquareExistence}]
If $\tau$ has no critical points then we use a gradient flow to identify $X$
with $I \times M_0$ such that $\tau(t,p)=t$, and then we see $\zeta_0$ and
$\zeta_1$ as two indefinite Morse functions on $M_0$. Then our result follows
from Theorem~\ref{T:MorseDeformationExistence}; we get an indefinite generic homotopy $\zeta_t$ and we let $G(t,p) = (t,\zeta_t(p))$.

Thus if we can now prove the theorem in the case where $\tau$ has exactly one
critical point $p \in X$, then we are done. Suppose that $\tau(p) = 1/2$ and
that $p$ has index $k \leq n/2$. (If $k > n/2$ then replace $\tau$ with
$1-\tau$ and switch $M_0$ and $M_1$.) Choose a gradient-like vector field $V$ for $\tau$, and use this to find an embedding $\phi : [-\epsilon,\epsilon] \times B^{n-1-k} \times S^{k-1} \hookrightarrow M_0$ which gives the glueing map for the associated handle with appropriate framing. (We have split the normal direction into a product of
$[-\epsilon,\epsilon]$ and $B^{n-1-k}$ as preparation for the use of Theorem~\ref{T:MorseExistence}. Note that $k-1$ is the dimension referred to as $l_i$ in the statement of that theorem, and that the dimension of $M_0$ is $m=n-1$. We need to verify that $k-1 < (n-1)/2$, which we do have, because $k-1 \leq (n/2)-1 = (n-2)/2 < (n-1)/2$.) Let $T \subset \tau^{-1}(1/2 - \delta)$ be the image of $\phi$. For some small $\delta > 0$ we can then decompose $X$ into a union of four parts, $X = X_0 \cup X_c \cup H \cup X_1$ with the following properties: (Figure~\ref{F:Morse2FcnConstruction} shows where these four parts will sit in $I^2$ and shows what the singular locus $G(Z_G)$ will look like in each part.)
\begin{figure}[ht!]
\labellist
\small\hair 2pt
\pinlabel $X_0$ at 24 145
\pinlabel $X_1$ at 148 45
\pinlabel $X_c$ at 79 36
\pinlabel $X_c$ at 80 139
\pinlabel $X_c$ at 78 69
\pinlabel $H$ at 73 90
\endlabellist
\begin{center}
\includegraphics{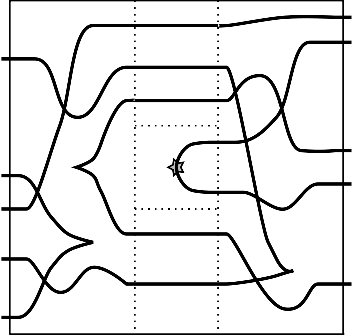}
\caption{Constructing a Morse $2$--function with a single critical point in the
horizontal Morse function. This diagram shows the images of the four parts of $X$, $X_0$, $X_1$, $X_c$ and the handle $H$; the image of the critical point is at the star. Note that the images of $X_0$ and $X_1$ are disjoint, while the image of $X_c$ contains the image of $H$, even though $X_c$ and $H$ intersect in $X$ only along their boundaries.}
\label{F:Morse2FcnConstruction}
\end{center}
\end{figure}
\begin{enumerate}
 \item $X_0 = \tau^{-1}[0,1/2-\delta]$ and is identified, via $V$, with
$[0,1/2-\delta] \times M_0$ in such a way that $\tau|_{X_0}(t,p) = t$.
 \item $X_1 = \tau^{-1}[1/2+\delta,1]$ and is identified, via $V$, with
$[1/2+\delta,1] \times M_1$ in such a way that $\tau|_{X_1}(t,p) = t$.
 \item $X_c \subset \tau^{-1}[1/2-\delta,1/2+\delta]$ and is identified, via
$V$, with $[1/2-\delta,1/2+\delta] \times (M_0 \setminus (T \setminus \partial
T))$, in such a way that $\tau|_{X_c}(t,p) = t$.
 \item $H$ is the $k$--handle, the union of the forward flow lines for $V$
starting at $T$, together with the ascending manifold of $p$ (using $V$), intersected with $\tau^{-1}[1/2-\delta,1/2+\delta]$. On $H$ we have coordinates $(x_1, \ldots, x_n)$ with respect to which $\tau(\mathbf{x}) = 1/2 + \mu_k^n(\mathbf{x})$ and $V = -2x_1 \partial_{x_1} - \ldots - 2x_k \partial_{x_k} + 2x_{k+1} \partial_{x_{k+1}} + \ldots + 2x_n \partial_{x_n}$. We choose these coordinates so that the $x_{k+1}$ direction is the $[-\epsilon,\epsilon]$ direction in the attaching region $[-\epsilon,\epsilon] \times B^{n-1-k} \times S^{k-1}$ (and $(x_{k+2}, \ldots, x_n)$ give the $B^{n-1-k}$ directions while the sphere in the $(x_1, \ldots, x_k)$ coordinates gives the $S^{k-1}$ factor).
\end{enumerate}
In order to construct $G$, we first use Theorem~\ref{T:MorseExistence} to each component of $M_0$ to construct an indefinite, ordered Morse function $\zeta_{1/2-\delta} : M_0 \to I$ which is standard with respect to $\phi$ at height $1/2$, so that $\zeta_{1/2-\delta}$ on the attaching region $T$ is of the form $(t,x,p) \mapsto t+1/2$ (identifying $T$ with $[-\epsilon, \epsilon] \times B^{n-1-k} \times S^{k-1}$ via $\phi$), with critical values of index $\leq k-1$ at heights $< 1/2$ and critical values of index $\geq k$ at heights $> 1/2$. Now use Theorem~\ref{T:MorseDeformationExistence} to construct an indefinite, ordered generic homotopy $\zeta_{t}$ (for $t \in [0,1/2-\delta]$) connecting $\zeta_{0}$ to $\zeta_{1/2-\delta}$. Then we let $G : X_0 \to [0,1/2-\delta] \times I$ be $G(t,p) = (t,\zeta_{t}(p))$, after identifying $X_0$ with $[0,1/2-\delta] \times M_0$ as above. On $H$, at first just let $G(\mathbf{x}) = \Gamma_k^+(\mathbf{x})+(1/2,1/2)$, a forward $k$--handle. This gives the single vertical 
tangency as part of a horizontal parabola seen in the middle in Figure~\ref{F:Morse2FcnConstruction}. This fits together smoothly with the definition of $G$ on $X_0$. Now we postcompose with an isotopy of $[1/2-\delta,1/2+\delta] \times [0,1]$ to make the image of $H$ exactly equal to the square $[1/2-\delta,1/2+\delta] \times [1/2-\epsilon,1/2+\epsilon]$, so that $G$ as defined on $X_0$ and $H$ extends smoothly to $X_c \cong [1/2-\delta,1/2+\delta] \times (M_0 \setminus (T \setminus \partial T))$ via $G(t,p) = (t,\zeta_{1/2-\delta}(p))$. One sees then that these definitions fit together smoothly to define $G$ over $\tau^{-1}[0,1/2+\delta]$, and that $z \circ G$ then defines an indefinite Morse function $\zeta_{1/2+\delta}$ on $\tau^{-1}\{1/2+\delta\}$, which is identified with $M_1$ via $V$. 
Finally, we use Theorem~\ref{T:MorseDeformationExistence} to construct an indefinite generic homotopy $\zeta_{t}$ (for $t \in [1/2+\delta,1]$) connecting $\zeta_{1/2+\delta}$ to $\zeta_1$, and we define $G$ on $X_1 \cong [1/2+\delta,1] \times M_1$ by $G(t,p) = (t,\zeta_{t}(p))$.

If we arranged that $\zeta_{t}$ is ordered for $t \in [1/2+\delta,1]$, we would have completed the proof of the connectedness assertion. This is fine if $\zeta_{1/2+\delta}$ (handed to us by the construction on $X_0 \cup X_c \cup H)$ is ordered.
There are two different cases when $\zeta_{1/2+\delta}$ will not be ordered. The first case is when $k=n/2$. Here we have a critical point of index $n/2$ below a critical point of index $n-1-n/2 = n/2-1$. The relevant indices are indicated on the left in Figure~\ref{F:UnorderedAfterCriticalHandle}. However, as long as the level sets of $\zeta_{1/2+\delta}$ are connected, we can start off the homotopy $\zeta_t$, $t \in [1/2+\delta,1]$, by switching the heights of the two offending critical points as indicated on the right in Figure~\ref{F:UnorderedAfterCriticalHandle}. Because $\zeta_{1/2-\delta}$ has critical values of index $\leq k-1$ below $1/2$ and $\geq k$ above $1/2$, $\zeta_{1/2+\delta}$ is now ordered. The only case in which the level sets might be disconnected in the short space when $\zeta_t$ is not ordered is when $n/2 = n-2$ and $n/2-1 = 1$, i.e. when $n=4$ and $k=2$. In this case we should make sure that, in $\zeta_{1/2-\delta}$, the attaching $S^1$ for the $2$--handle $H$ does not separate the level set in which it lies. A moment's thought about the proof of Theorem~\ref{T:MorseExistence} shows that this is easy to achieve.

The second case when $\zeta_{1/2+\delta}$ will not be ordered is when $n-1-k > k$, in which case the upper of the two new critical points, which has index $n-1-k$, may have critical points of lower index above it. In this case start off the homotopy $\zeta_t$, $t \in [1/2+\delta,1]$, by lifting this critical point above those of lower index, and then proceed as above. The only case when this could conceivably cause any connectedness problems is when $k=1$ and $n-k-1=n-2$, but adding a $1$--handle and its dual $(n-2)$--handle to a fiber that is already connected cannot disconnect the fiber.
\begin{figure}[ht!]
\labellist
\small\hair 2pt
\pinlabel $n/2$ [r] at 34 41
\pinlabel $n/2$ [r] at 158 41
\pinlabel $n/2$ [tl] at 80 17
\pinlabel $n/2-1$ [tl] at 80 65
\pinlabel $n/2$ [tl] at 284 65
\pinlabel $n/2-1$ [tl] at 284 17

\endlabellist
\begin{center}
\includegraphics{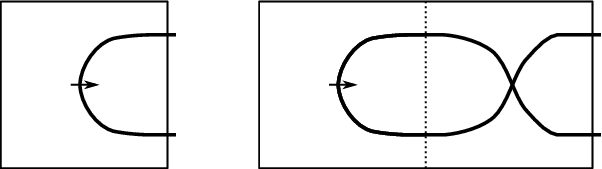}
\caption{After attaching a handle of index $n/2$, the vertical Morse function will not be ordered, as on the left. The arrow labelled $n/2$ indicates that this fold is of index $n/2$ when looked at in the direction of the arrow. The numbers to the right of each box are the index of the critical points of the vertical Morse function there. In the box on the right, we have switched the two critical points to restore order.}
\label{F:UnorderedAfterCriticalHandle}
\end{center}
\end{figure}
\end{proof}

The rest of this section is devoted to proving uniqueness, i.e. the proof of Theorem~\ref{T:SquareUniqueness}. To do this, we first show that our two models, forward and backward handles, are a complete list of local models, in the following sense:

\begin{lemma} \label{L:LocalSquareCritPtModel}
 Consider a square Morse $2$--function $G : X \to I^2$ and a critical point $p \in X$ of $t \circ G : X \to I$, of index $k$, with $G(p) = (t_p,z_p)$. Suppose we are given standard coordinates $(x_1, \ldots, x_n)$ on a neighborhood $\nu$ of $p$ such that $\tau = t \circ G(x_1, \ldots, x_n) = \mu_k^n(\mathbf{x}) + t_p$. Then there exists an arc $G_s$ of Morse $2$--functions (i.e. $G_s$ is Morse for all $s$) supported inside $\nu$, with $G_0=G$ and $t \circ G_s$ independent of $s$, such that, inside a smaller neighborhood $\nu' \subset \nu$, $G_1(\mathbf{x}) = \Gamma_k^\pm(\mathbf{x}) + (t_p,z_p)$. It will be $\Gamma_k^+$, i.e. a forward $k$--handle, exactly when the point $(t_p,z_p)$ is a local minimum for $t|_{G(Z_G)}$, and it will be $\Gamma_k^-$, a backward $k$--handle, exactly when $(t_p,z_p)$ is a local maximum.

\end{lemma}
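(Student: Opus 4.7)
Since $\tau = t \circ G$ is already in the standard index $k$ Morse form $\mu_k^n(\mathbf{x}) + t_p$ on $\nu$, the plan is to modify only $\zeta := z \circ G$ through an arc $\zeta_s$ supported in a smaller neighborhood $\nu' \subset \nu$ of $p$, so that $G_s := (\tau, \zeta_s)$ is a Morse $2$--function for every $s$ and $G_1$ agrees with $\Gamma_k^+(\mathbf{x}) + (t_p,z_p)$ (forward case) or $\Gamma_k^-(\mathbf{x}) + (t_p,z_p)$ (backward case) on $\nu'$. Since $G$ has only a fold at $p$ and $d\tau(p)=0$, we must have $d\zeta(p)\neq 0$ (otherwise $dG(p)=0$ and $p$ would be strictly more degenerate than a fold). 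Write $\zeta(\mathbf{x}) = z_p + \sum_i a_i x_i + Q(\mathbf{x})$ with $(a_1,\ldots,a_n)\neq 0$ and $Q=O(|\mathbf{x}|^2)$.

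The singular locus equation $d\tau \wedge d\zeta = 0$, combined with $d\tau = 2\sum_i \epsilon_i x_i\,dx_i$ (where $\epsilon_i = -1$ for $i \le k$ and $\epsilon_i = +1$ for $i > k$), linearizes at $p$ to exhibit $Z_G$ as a smooth arc through $p$ with tangent $v = \sum_i \epsilon_i a_i \partial_{x_i}$. Consequently
\[
d^2\tau(v,v) \;=\; 2\sum_i \epsilon_i a_i^2 \;=\; 2\Big(\sum_{i>k} a_i^2 - \sum_{i \le k} a_i^2\Big) \;=:\; 2\Phi(\zeta),
\]
and $(t_p, z_p)$ is a local minimum of $t|_{G(Z_G)}$ iff $\Phi(\zeta) > 0$ (forward), a local maximum iff $\Phi(\zeta) < 0$ (backward). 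I treat the forward case; the backward case is obtained by swapping the roles of the $\{1,\ldots,k\}$ and $\{k+1,\ldots,n\}$ coordinate blocks and targeting $\zeta = z_p + x_k$.

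The deformation proceeds in three stages, concatenated and cut off by a bump $\rho$ equal to $1$ on $\nu'$ and vanishing outside $\nu$. Stage~1 kills the nonlinear tail: $\zeta_s^{(1)} = z_p + L + (1-s)Q$ where $L = \sum_i a_i x_i$; shrinking $\nu'$ makes $(1-s)dQ$ uniformly $C^1$-small relative to the constant $dL$, so $Z_{G_s}$ persists as a single smooth arc through $p$ by the implicit function theorem, and $\Phi$ is unchanged. Stage~2 damps the negative-block coefficients: $\zeta_s^{(2)} = z_p + \sum_{i>k} a_i x_i + (1-s)\sum_{i \le k} a_i x_i$; the forward invariant becomes $\sum_{i>k}a_i^2 - (1-s)^2\sum_{i \le k}a_i^2$, which is increasing in $s$ and hence stays positive. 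Stage~3 first rotates $(a_{k+1},\ldots,a_n)$ to $(|a|,0,\ldots,0)$ via a path in $SO(n-k)$ (possible since $S^{n-k-1}$ is path-connected when $n-k \ge 2$, and trivial when $n=k+1$), yielding $\zeta = z_p + |a|\,x_{k+1}$, and then rescales the coefficient to $1$ via $\zeta_s = z_p + ((1-s)|a|+s)\,x_{k+1}$. Throughout Stage~3 the form is supported in the positive block, so $\Phi$ equals $|a|^2>0$ (then the squared final coefficient), and $Z_{G_s}$ remains a smooth arc. At the end $\zeta_1 = z_p + x_{k+1}$ on $\nu'$, so $G_1 = \Gamma_k^+(\mathbf{x}) + (t_p,z_p)$ there as desired.

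The hard part is Stage~1: one must verify that straightening $\zeta$ does not spawn new singular points of $G_s$ inside $\nu'$ (spurious folds, cusps, or worse), and that the cutoff does not create singularities in the transition annulus $\nu \setminus \nu'$. The first is controlled by the observation that the fold at $p$ is transversally cut out by $d\tau \wedge d\zeta = 0$ at first order, so any sufficiently $C^1$-small perturbation of $\zeta$ on $\nu'$ preserves the local smooth-arc structure of $Z_{G_s}$. The second is handled by choosing $\rho$ sharp enough that $|d\zeta_s - d\zeta| \le \tfrac{1}{2}|d\zeta|$ on $\nu \setminus \nu'$ (using that $|d\zeta|$ is bounded below there by compactness and $d\zeta \neq 0$), so $d\zeta_s$ stays parallel to $d\zeta$ only on the original fold, preventing new singularities from appearing in the cutoff region.
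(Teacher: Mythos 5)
Your approach is essentially the paper's: linearize $\zeta = z \circ G$ near $p$ by a small Morse-preserving perturbation, observe that the Lorentzian length $\Phi = \mu_k^n(a)$ of the coefficient vector $a = d\zeta(p)$ is nonzero (with its sign deciding forward versus backward), and then normalize $a$ along a path on which $\Phi$ never vanishes. The paper compresses the normalization into a single $\mu_k^n$-isometry $T\in O(k,n-k)$ taking $a$ to $c\,e_{k+1}$ or $c\,e_k$ with $c>0$, applied as a radially cut-off ambient isotopy of the domain, which has the advantage that each $G_s$ is automatically a Morse $2$--function since precomposition with a diffeomorphism preserves the class. Your three-stage direct deformation of the coefficient vector is an equivalent, more explicit implementation of the same idea, and your monotonicity observation for $\Phi$ under damping the negative block is correct.

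There is one genuine gap: in Stage 3 the $SO(n-k)$-rotation cannot change the sign of the single surviving coefficient when $n-k=1$ (dually, $k=1$ in the backward case); $SO(1)$ is trivial, so the claimed endpoint $\zeta_1 = z_p + |a|\,x_{k+1}$ is false when $a_n<0$, and the linear rescaling to $+1$ would then pass through zero. Fix this either by precomposing with the $\mu_k^n$-preserving, positive-determinant double flip $x_1\mapsto -x_1$, $x_n\mapsto -x_n$ (available since $k\ge 1$ in the problematic case), or by invoking the paper's remark that $\Gamma_k^\pm$ may equivalently be defined with $\pm x_j$. A smaller imprecision: the Stage 1 cutoff justification, that $|d\zeta_s - d\zeta|\le\tfrac12|d\zeta|$ forces "$d\zeta_s$ to stay parallel to $d\zeta$ only on the original fold," is not the right invariant. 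What you actually need is that the rank-one locus $\{d\tau\wedge d\zeta_s=0\}$ in $\nu\setminus\nu'$ persists as a smooth arc with no new components, and this follows from transversality (stability) of the existing fold under a $C^1$-small perturbation, not from proportionality of $d\zeta_s$ to $d\zeta$. Both points are readily repaired, and the argument is sound in substance.
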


\begin{proof}
 Let $\tau = t \circ G$ and $\zeta = z \circ G$, i.e. $G(\mathbf{x}) = (\tau(\mathbf{x}), \zeta(\mathbf{x}))$. We know that $\tau = \mu_k^n(\mathbf{x}) + t_p$. Because the rank of $DG$ at $p=0$ must be $1$, we know that $\zeta$ is nonsingular at $0$ and so, after a small perturbation of $\zeta$ (keeping $G$ Morse) supported in a neighborhood of $0$ we can assume that $\zeta$ is linear of the form $\zeta(x_1, \ldots, x_n) = a_1 x_1 + \ldots + a_n x_n + z_p$. Now note that $-a_1^2 - \ldots -a_k^2 + a_{k+1}^2 + \ldots + a_n^2 \neq 0$, because otherwise we would have a $1$--dimensional subset of the singular locus of $G$ on which $G$ was constant, which does not happen in any of the local models for a Morse $2$--function. Thus there is a linear transformation of $\R^n$ preserving the quadratic form $\mu_k^n$ and taking $(a_1, \ldots, a_n)$ to $(0, \ldots, 0, c, 0, \ldots, 0)$ for some positive $c$, with the $c$ in either the $k$'th or $(k+1)$'st coordinate. Smoothly interpolate from the identity to this linear 
transformation while going in radially towards the origin in our neighborhood, and this creates an ambient isotopy of the domain. Precomposing with this ambient isotopy and postcomposing with a rescaling isotopy gives the arc of Morse $2$--functions $G_s$ with the desired properties. Because we preserved the quadratic form $\mu_k^n$, we didn't change $\tau$ in this isotopy, and thus $t \circ G_s$ is independent of $s$.

\end{proof}

At some point one might wish that a certain forward index $k$ critical point was actually a backward index $k$ critical point, or vice versa. The next lemma addresses this:

\begin{lemma} \label{L:ForwardToBackward}
Suppose that $G : X \to I^2$ is a square Morse $2$--function and that $p \in G$ is a critical point of $t \circ G$ of index $k$ with local coordinates with respect to which $G(\mathbf{x}) = \Gamma_k^\pm(\mathbf{x})+(t_p,z_p)$. If $2 \leq k \leq n-2$ there exists an indefinite generic homotopy $G_s$ of Morse $2$--functions supported inside this coordinate neighborhood, with $G_0 = G$, such that, inside a smaller neighborhood of $p$, we have $G_1(\mathbf{x}) = \Gamma_k^\mp(\mathbf{x}) + (t_p,z_p)$. If all fibers of $G$ are connected then we can arrange that all fibers of $G_s$ are connected for all $s$. We can further arrange that $t \circ G_s$ is independent of $s$.
\end{lemma}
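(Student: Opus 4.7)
The plan is to construct an explicit local homotopy supported in the given coordinate chart. Since we require $t \circ G_s = \mu_k^n + t_p$ to be independent of $s$, we may write $G_s(\mathbf{x}) = (\mu_k^n(\mathbf{x}), \zeta_s(\mathbf{x})) + (t_p, z_p)$ and reduce the problem to finding a $1$--parameter family of functions $\zeta_s$ interpolating from $\zeta_0 = x_{k+1}$ to $\zeta_1 = x_k$, such that each $G_s$ is a Morse $2$--function and the family is generic. The fold set of $G_s$ restricted to any regular level set $L_c = (\mu_k^n)^{-1}(c)$ is precisely the critical set of $\zeta_s|_{L_c}$, so fiberwise in the $c$-parameter this is a problem about generic $1$--parameter families of Morse functions on each $L_c$, of the type analyzed in Section~4, except that we must also track how folds match across the singular level $c = 0$ containing the cone point $\mathbf{0} = p$.

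A direct computation shows that for $c > 0$ (where $L_c \cong \R^k \times S^{n-k-1}$), $\zeta_0|_{L_c}$ is Morse with exactly two critical points, of indices $k$ and $n-k-1$, while $\zeta_1|_{L_c}$ has none; symmetrically, for $c < 0$ (where $L_c \cong S^{k-1} \times \R^{n-k}$), $\zeta_0|_{L_c}$ has no critical points and $\zeta_1|_{L_c}$ has two, of indices $k-1$ and $n-k$. Thus, over the course of the homotopy, the two arcs of the initial right-opening parabolic fold image must disappear and be replaced by two arcs of a left-opening parabolic fold image, with the fold indices on the two branches shifted by $\pm 1$. The naive linear interpolation $\zeta_s = (1-s) x_{k+1} + s x_k$ fails at $s = 1/2$ (the entire interim fold collapses to the single point $(t_p, z_p)$), so we must route the homotopy through a sequence of $2$--parameter singularities from Section~2.

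The sequence I propose is the following: near $s = 0$, perform an eye-birth (Figure~\ref{F:Eye}) to produce a small eye of cancelling folds attached transversally to the parabola; in the middle of the homotopy, use cusp-fold and Reidemeister-II fold crossings to drag the two cusps of the eye across the vertex $(t_p, z_p)$ so that each cusp ends up on the opposite side, paired with one arc of the original parabola; near $s = 1$, execute swallowtail-deaths (Figure~\ref{F:Swallowtail}) at each of the two cusps, coalescing each cusp with its adjacent original parabolic arc. The net effect is that the two original parabolic arcs are annihilated while the two eye arcs smooth out into a single new parabolic fold opening in the opposite direction, with indices shifted by one as required. A final ambient isotopy supported in the chart normalizes the result to $\Gamma_k^\mp + (t_p, z_p)$ on a smaller neighborhood. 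Each step modifies only $\zeta_s$, so $t \circ G_s = \mu_k^n + t_p$ is automatic, and the hypothesis $2 \le k \le n-2$ is exactly what is needed to ensure that all fold indices appearing throughout ($k-1$, $k$, $n-k-1$, and $n-k$) lie strictly between $0$ and $n-1$, so no definite fold is ever created.

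The hard part will be verifying that the fiberwise Cerf modifications described above assemble into a legitimate generic $1$--parameter family of Morse $2$--functions, especially across the singular level $c = 0$, where one must track critical points of $\zeta_s|_{L_c}$ as they flow through the cone singularity at $\mathbf{0} \in L_0$; the explicit quadratic form of $\mu_k^n$ provides the required matching, and the $\pm 1$ index shift is exactly that between forward and backward $k$--handle fold indices. For the fiber-connectedness clause, each move in the sequence alters a generic fiber of $G_s$ by either an isotopy or the attachment/removal of a cancelling pair of indefinite handles, so connectedness is preserved except perhaps at isolated parameter values; at any such value one inserts an auxiliary cancelling indefinite fold pair by the same method as in the proof of Theorem~\ref{T:MorseDefOfDefsExistence} to repair the fibers without disturbing $t \circ G_s$.
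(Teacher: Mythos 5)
Your proposal takes a genuinely different route from the paper's, and it has a gap that you yourself flag but do not close.

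The paper's proof is a dimensional reduction: it exhibits the whole deformation as a homotopy $h_s$ of Morse $2$--functions from $\R^2$ to $\R^2$, carried out only in the $(x_k,x_{k+1})$--plane, while the remaining coordinates $(x_1,\ldots,x_{k-1},x_{k+2},\ldots,x_n)$ are simply carried along by the fixed quadratic form. In that two--dimensional picture the entire homotopy is a \emph{single} swallowtail event: the rightward-opening parabola develops a swallowtail at its vertex, and the vertex of the cusped triangle is precisely the vertex of the new leftward-opening parabola. Because the modification is confined to a two--dimensional slice and the extra coordinates are untouched, indefiniteness and fiber-connectedness for $2\le k\le n-2$ become transparent, and the claim that $t\circ G_s$ can be kept fixed is a small isotopy correction. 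There is no need to track anything across the level set $\{\mu_k^n=0\}$ separately, because the local model is global on the coordinate chart.

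By contrast, you attempt to build the family $\zeta_s$ directly on all of $\R^n$, replacing the single swallowtail by a composite of an eye birth, several cusp--fold and Reidemeister-II crossings, and then two swallowtail deaths. The sequence is plausible in outline but is not verified: you do not pin down the fold indices that arise at each stage, nor check that the configuration right before the swallowtail deaths actually has the cusped-triangle shape those moves require, nor show that no definite fold appears in between. More importantly, you explicitly label the central issue --- that the fiberwise Cerf modifications on the level sets $L_c$ must assemble into a genuine homotopy of Morse $2$--functions, in particular across the singular level $c=0$ through the Morse point of $t\circ G_s$ --- as ``the hard part,'' and you do not resolve it; that is the crux of the lemma, not a detail. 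Finally, the appeal to Theorem~\ref{T:MorseDefOfDefsExistence} to repair fiber-connectedness is out of place: that theorem concerns $2$--parameter families of $I$--valued Morse functions on a fixed manifold, which is not the situation here, whereas the paper's argument gets fiber-connectedness for free from the product structure in the untouched coordinates. If you adopt the $(x_k,x_{k+1})$--plane reduction, all of these difficulties dissolve.
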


\begin{proof}
Figure~\ref{F:ForwardToBackward} shows how to do this (without the indefinite condition) when $n=2$ and $k=1$. For this we should in principle be able to write down an explicit formula, but the illustration probably explains what is going on better. We know that a forward handle has become a backward handle simply because the vertical tangency in the fold locus has changed from being a rightward-opening parabola to a leftward-opening parabola. The homotopy has passed through a swallowtail singularity. To get the higher dimensional version, and indefiniteness, consider Figure~\ref{F:ForwardToBackward} to be a picture of what is happening in the $(x_k,x_{k+1})$--plane, and keep the homotopy independent of $s$ in the other coordinates $(x_1, \ldots, x_{k-1})$ and $(x_{k+2}, \ldots, x_n)$. It is easy to see that, for $n \geq 4$ and $2 \leq k \leq n-2$, this is an indefinite homotopy and does not disconnect fibers. The way we have drawn it, the critical point moves slightly to the left, but after modifying by 
a small isotopy we can keep $t \circ G_s$ independent of $s$ throughout the homotopy. 
\begin{figure}[ht!]
\begin{center}
\includegraphics{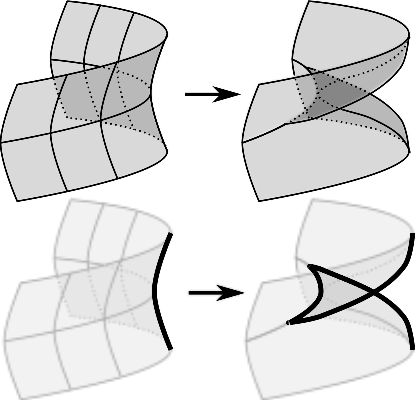}
\caption{Bending a forward handle to a backward handle. The point is to see that the only change to the projection to the horizontal axis in this homotopy is that the critical point moves a little to the left, but that when we look at the map to $2$ dimensions, i.e. projection to the page exactly as in this figure, the fold at the critical point changes from opening to the right to opening to the left. This figure is in fact nothing more than the standard homotopy that introduces a swallowtail.}
\label{F:ForwardToBackward}
\end{center}
\end{figure}
 
\end{proof}

We will want to accompany the lemmas above with a lemma stating that, outside the standard neighborhood of a critical point of $t \circ G$, $G$ can be taken to be ``constant in the $t$ direction''. We state this precisely as follows:

\begin{lemma} \label{L:ConstantOutsideHandle}
 Consider a square Morse $2$--function $G : X \to I^2$ and an index $k$ critical point $p \in X$ of $\tau = t \circ G$, with local coordinates near $p$ with respect to which $G = \Gamma_k^+(\mathbf{x})+(t_p,z_p)$. In these local coordinates, let $V$ be the standard gradient vector field for $\tau = t \circ G = \mu_k^n(\mathbf{x}) + t_p$ and, for any small $\epsilon > 0$, let $H_\epsilon$ be the $k$--handle neighborhood of $p$ obtained by flowing forward along $V$ starting from the attaching region $\{(x_1, \ldots, x_n) \mid \tau(x_1, \ldots, x_n) = -\epsilon + t_p, x_{k+1}^2 \leq \epsilon^2, x_{k+2}^2+ \ldots + x_n^2 \leq \epsilon^2 \} \cong S^{k-1} \times [-\epsilon,\epsilon] \times B^{n-k-1}$, together with the ascending disk, and stopping at $\tau = t_p+\epsilon$. Note that $H_\epsilon \subset \tau^{-1}[t_p-\epsilon,t_p+\epsilon]$; let $H_\epsilon^c$ be the closure of $\tau^{-1}[t_p-\epsilon,t_p+\epsilon] \setminus H_\epsilon$ and $M_{t_p-\epsilon}^c = H_\epsilon^c \cap \tau^{-1}(t_p-\epsilon)$. Using $V$, 
$H_\epsilon^c$ is naturally identified with $[t_p-\epsilon,t_p+\epsilon] \times M_{t_p-\epsilon}^c$. Then there exists an $\epsilon > 0$ and an arc of Morse $2$--functions $G_s$, with $G_0 = G$, which is independent of $s$ inside $H_{\epsilon}$ and independent of $s$ outside $\tau^{-1}(t_p-2\epsilon,t_p+2\epsilon)$, such that, on $H_\epsilon^c$, identified with $[t_p-\epsilon,t_p+\epsilon] \times M_{t_p-\epsilon}^c$, $G_1$ is of the form $(t,x) \mapsto (t,g(x))$ for a fixed Morse function $g = \zeta|_{\tau^{-1}(t_p-\epsilon)}$. In addition, we can arrange that there are no critical values of $g|_{M_{t_p-\epsilon}^c}$ in $[z_p-\epsilon,z_p+\epsilon]$.
\end{lemma}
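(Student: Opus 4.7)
The plan is to use the flow of $V$ to put $G$ into product form $(t,x)\mapsto(t,\zeta_t(x))$ on the complement of the handle, recognize $\{\zeta_t\}$ as an arc of Morse functions for small $\epsilon$, and then homotope this arc to the constant arc $g$ by a $t$-reparametrization.

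First I would shrink $\epsilon$ so that $p$ is the unique critical point of $\tau$ in $\tau^{-1}[t_p-2\epsilon,t_p+2\epsilon]$; this is possible since $\tau$ is Morse. The flow of $V$ then extends the identification from $H_\epsilon^c$ to a diffeomorphism
\[
[t_p-2\epsilon,t_p+2\epsilon]\times M_{t_p-\epsilon}^c\;\longrightarrow\;\tau^{-1}[t_p-2\epsilon,t_p+2\epsilon]\setminus\mathrm{int}(H_\epsilon),
\]
in which $G(t,x)=(t,\zeta_t(x))$. Since $G$ has no fold points in this complement region (the fold locus $Z_G$ of $G$ lies inside $H_\epsilon$, as can be read off the standard model $\Gamma_k^+$), each $\zeta_t$ is Morse and depends smoothly on $t$; after shrinking $\epsilon$ once more we may assume the arc $\{\zeta_t\}$ has no births, deaths or critical-value crossings, so it is an arc of Morse functions in the sense of Definition~\ref{L:ArcOfMorse}. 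By applying a small preliminary perturbation of $G$ supported in this region — and shrinking $\epsilon$ again if necessary — I can also ensure that no critical value of $g=\zeta_{t_p-\epsilon}|_{M_{t_p-\epsilon}^c}$ lies in $[z_p-\epsilon,z_p+\epsilon]$, since these critical values form a finite set that can be shifted off $z_p$ by a generic perturbation of $\zeta|_{M_{t_p-\epsilon}^c}$.

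Next, I would choose a smooth family of orientation-preserving diffeomorphisms $\phi_s:\mathbb{R}\to\mathbb{R}$ with $\phi_0=\mathrm{id}$, with $\phi_s$ the identity outside $(t_p-2\epsilon,t_p+2\epsilon)$ for all $s$, and with $\phi_1$ sending the interval $[t_p-\epsilon,t_p+\epsilon]$ to the single value $t_p-\epsilon$ (smoothly interpolating on the two flanking strips $[t_p-2\epsilon,t_p-\epsilon]$ and $[t_p+\epsilon,t_p+2\epsilon]$). Then define
\[
G_s(t,x)=(t,\zeta_{\phi_s(t)}(x))
\]
on the product region and $G_s=G_0$ outside $\tau^{-1}(t_p-2\epsilon,t_p+2\epsilon)$. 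At $s=0$ we recover $G_0=G$, while at $s=1$ and $t\in[t_p-\epsilon,t_p+\epsilon]$ we get $G_1(t,x)=(t,g(x))$, as required. Since $\phi_s$ is a smooth reparametrization of the arc of Morse functions $\{\zeta_t\}$, no non-Morse phenomena are introduced in the $t$-direction, the horizontal projection $t\circ G_s=\tau$ is unchanged, and $G_s$ is an arc of square Morse $2$-functions.

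The main technical obstacle is to ensure $G_s$ extends smoothly across $\partial H_\epsilon$ to match the unchanged standard handle model $\Gamma_k^++(t_p,z_p)$ inside $H_\epsilon$. The subtle point is that along flow lines of the Euclidean gradient $V$ the coordinate $\zeta=x_{k+1}$ is not constant, so on the side boundary of $H_\epsilon$ the naive $V$-identification does not put $G_0$ into constant-in-$t$ form. I expect the cleanest resolution is to modify $V$ inside a collar of $\partial H_\epsilon$ in $H_\epsilon^c$ to a vector field $W$ with $G_*W=\partial_t$ (which exists because $DG$ has rank $2$ in the complement region), interpolating back to $V$ away from $\partial H_\epsilon$. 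Under this hybrid identification the formula $G_0(t,x)=(t,g(x))$ holds automatically on the boundary collar, so the reparametrization $\phi_s$ does nothing there and $G_s$ matches the standard model on $\partial H_\epsilon$; equivalently, one damps the reparametrization to the identity near $\partial H_\epsilon$ via a bump function and absorbs the resulting small discrepancy by a final smoothing. Everything else — that $G_s$ is independent of $s$ outside $\tau^{-1}(t_p-2\epsilon,t_p+2\epsilon)$, that $G_1=(t,g(x))$ on $H_\epsilon^c$, and that critical values of $g$ avoid $[z_p-\epsilon,z_p+\epsilon]$ — is already built into the construction.
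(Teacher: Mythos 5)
Your proposal takes essentially the same route as the paper's (much terser) proof: view $G$ on the complement of the handle as a $t$-parametrized family of Morse functions $\zeta_t$ and reparametrize $t$ to make the family constant over $[t_p-\epsilon,t_p+\epsilon]$. You also correctly flag a point the paper glosses over — along $V$-flow lines emanating from $\partial T$ the coordinate $x_{k+1}$, hence $z\circ G$, is not constant (it grows like $e^{s}x_{k+1}^{(0)}$), so the naive $V$-flow trivialization of $H_\epsilon^c$ does not present $G$ in the form $(t,g(x))$ at the seam with the handle — and your first fix (replace $V$ in a collar of $\partial H_\epsilon$ by a lift $W$ of $\partial_t$ and interpolate) is the right one. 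The alternative ``damp the reparametrization and absorb the discrepancy by a final smoothing'' is too vague to stand on its own; stick with the $W$ construction.

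Several subsidiary assertions, though, are false even if the argument survives. The claim that ``the fold locus $Z_G$ of $G$ lies inside $H_\epsilon$'' is wrong: only the fold through $p$ is confined to $H_\epsilon$, and $G$ will in general have other fold components meeting $\tau^{-1}[t_p-2\epsilon,t_p+2\epsilon]$ no matter how small $\epsilon$ is. (If there really were no fold points in the complement, each $\zeta_t$ would be a submersion, not merely Morse, and the lemma's last clause about critical values of $g$ would be vacuous.) What actually makes $\{\zeta_t\}$ an arc of Morse functions is shrinking $\epsilon$ to avoid the discretely-many cusps and critical-value crossings, which you do invoke a sentence later; the ``no fold points'' parenthetical should simply be deleted. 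Likewise, $DG$ has rank $2$ only off $Z_G$, not throughout the complement region — fortunately the collar of $\partial H_\epsilon$ where you need $W$ is fold-free once $\epsilon$ is small. Finally, $\phi_1$ is not a diffeomorphism once it collapses $[t_p-\epsilon,t_p+\epsilon]$ to a point; you want smooth non-decreasing surjections of $\mathbb{R}$, identity outside $(t_p-2\epsilon,t_p+2\epsilon)$, with $\phi_1$ constant on the inner interval.
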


\begin{proof}
 Given the standard model on the handle, the complement of the handle in $\tau^{-1}([t_p-2\epsilon,t_p+2\epsilon])$ is a product cobordism and $G$ on this product is identified with an arc of Morse functions $g_t$. A standard homotopy can make $g_t$ independent of $t$ for $t \in [t_p-\epsilon,t_p+\epsilon]$.
\end{proof}

We now present the proof of uniqueness for indefinite square Morse $2$--functions as a sequence of steps forward-referencing two further lemmas which will be stated and proved afterwards. 

\begin{proof}[Proof of Theorem~\ref{T:SquareUniqueness}]
 We are given two indefinite square Morse $2$--functions $G_0, G_1 : X \to I^2$ which agree on $M_0$ and $M_1$, i.e. $z \circ G_0 |_{M_i} = \zeta_i = z \circ G_1 |_{M_i}$, for $i = 0,1$. Then we need to construct an indefinite generic homotopy $G_s : X \to I^2$ between $G_0$ and $G_1$, and we need to address the issue of connected fibers. The steps are as follows:
 \begin{enumerate}
  \item Let $\tau_0 = t \circ G_0$ and let $\tau_1 = t \circ G_1$. These are indefinite $I$--valued Morse functions, and we are given that they are ordered. Let $\tau_s : X \to I$ be an indefinite, ordered generic homotopy from $\tau_0$ to $\tau_1$ such that all the births of cancelling pairs of critical points occur for $s \in [0,1/4]$ and all the deaths occur for $s \in [3/4,1]$, and such that $\tau_s$ is independent of $s$ for all $s \in [1/4,3/4]$. We then construct the desired generic homotopy $G_s$ for $s \in [0,1/4]$ and for $s \in [3/4,1]$ such that $t \circ G_s = \tau_s$. (We need to slightly modify $\tau_s$ to achieve this.) This step is carried out in Lemma~\ref{L:SquareDefFromIDef}. The key outcome of this step is that $t \circ G_{1/4} = t \circ G_{3/4}$, so that when we construct $G_s$ for $s \in [1/4,3/4]$, we can leave $t \circ G_s$ independent of $s$ and work on $z \circ G_s$.
  \item Now we need to connect $G_{1/4}$ to $G_{3/4}$. Let $\tau = t \circ G_{1/4} = t \circ G_{3/4}$. Our next step is to extend the homotopy $G_s$ to $s \in [1/4,1/2]$, keeping $t \circ G_s = \tau$ for all $s \in [1/4,1/2]$ so that, for some $\epsilon > 0$ and for each critical value $t_*$ of $\tau$, $G_{1/2}$ and $G_{3/4}$ agree on $\tau^{-1}([t_*-\epsilon,t_*+\epsilon]) = G_{1/2}^{-1}([t_*-\epsilon,t_*+\epsilon] \times I) = G_{3/4}^{-1}([t_*-\epsilon,t_*+\epsilon] \times I)$. This step is carried out in Lemma~\ref{L:SquareDefNearCritPoints}.
  \item Finally, we extend $G_s$ to $s \in [1/2,3/4]$ to connect $G_{1/2}$ to $G_{3/4}$ as follows: The parts of $X$ where $G_{1/2}$ and $G_{3/4}$ do not yet agree are of the form $X_* = \tau^{-1}([t_*+\epsilon,t_*'-\epsilon])$, for two consecutive critical values $t_* < t_*'$ of $\tau$. But then $X_*$ can be identified with a product $[t_*+\epsilon,t_*'-\epsilon] \times M_*$, where $M_* = \tau^{-1}(t_*+\epsilon)$. Furthermore, with this identification, for $s=1/2$ and $s=3/4$, we see that $G_s|_{X_*}$ is of the form $(t,p) \mapsto (t,g_{s,t}(p))$, precisely because $t \circ G_{1/2} = t \circ G_{3/4} = \tau$. Thus we can use Theorem~\ref{T:MorseDefOfDefsExistence} from the preceding section to construct a homotopy $g_{s,t}$ from $g_{1/2,t}$ to $g_{3/4,t}$, and then define $G_s$ for $s \in [1/2,3/4]$ and $p \in X_* = [t_*+\epsilon,t_*'-\epsilon] \times M_*$ by $G_s(t,p) = (t,g_{s,t}(p))$. (Here we actually need a parameterized version of Lemma~\ref{L:Connected2Ordered}, which is discussed in a similar context in the proof of Lemma~\ref{L:HorizCritSwitch}.) Finally, since $G_{1/2}$ and $G_{3/4}$ already agree on $\tau^{-1}([t_*-\epsilon,t_*+\epsilon])$ for critical points $t_*$,
 we can define $G_s = G_{1/2} = G_{3/4}$ on these subsets, for all $s \in [1/2,3/4]$, and we are done.
 \end{enumerate}
 
 In the above steps we did not address the issue of keeping fibers connected when $G_0$ and $G_1$ are fiber-connected. We have already arranged for $\tau_s$ to be ordered for all $s$. In this case, Lemma~\ref{L:SquareDefFromIDef} also states that $G_s$ will have all fibers connected for all $s \in [0,1/4]$ and for all $s \in [3/4,1]$. Lemma~\ref{L:SquareDefNearCritPoints} then explicitly asserts that, if the fibers of $G_{1/4}$ and $G_{3/4}$ are connected in each $\tau^{-1}([t_*-\epsilon,t_*+\epsilon])$, for $t_*$ a critical value of $\tau$, then we can keep the fibers of $G_s$ connected there when we construct $G_s$ for $s \in [1/4,1/2]$. Finally, when we use Theorem~\ref{T:MorseDefOfDefsExistence} to construct $G_s$ for $s \in [1/2,3/4]$, we should first use Lemma~\ref{L:Connected2Ordered} to get each of the Morse function $g_{1/2,t_*+\epsilon} = g_{3/4,t_*+\epsilon}$ and $g_{1/2,t_*'-\epsilon} = g_{3/4,t_*'-\epsilon}$ to be ordered.

\end{proof}

We now state and prove the two lemmas referenced in the proof above.

\begin{lemma} \label{L:SquareDefFromIDef}
 Given any indefinite Morse $2$--function $G : X \to I^2$ and an indefinite, ordered generic homotopy $\tau_s : X \to I$ between Morse functions, with $t \circ G = \tau_0$ and with no deaths of cancelling pairs of critical points, there exists an indefinite, ordered generic homotopy $\tau'_s : X \to I$, with $\tau'_0 = \tau_0$ and $\tau'_1 = \tau_1$, which is connected to $\tau_s$ by an arc of generic homotopies, and there exists an indefinite generic homotopy of Morse $2$--functions $G_s : X \to I^2$ with $G_0 = G$ and with $t \circ G_s = \tau'_s$. If $G$ is fiber-connected then we can arrange that $G_s$ is fiber-connected for all $s$.
\end{lemma}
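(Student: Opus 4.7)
The plan is to decompose $\tau_s$ into its elementary pieces and lift each piece to a modification of $G_s$. Since $\tau_s$ is a generic homotopy with no deaths by hypothesis, it is a concatenation of intervals on which $\tau_s$ is an arc of Morse functions, finitely many critical-value crossings, and finitely many births of cancelling pairs of successive indices $k,k+1$. On intervals where $\tau_s$ is an arc of Morse functions, each critical point of $\tau_s$ moves along a smooth arc in $X$; using the local models $\Gamma^\pm_k$ of Lemma~\ref{L:LocalSquareCritPtModel} near each critical point of $t\circ G_s$, together with the ``constant outside the handle'' description of Lemma~\ref{L:ConstantOutsideHandle}, I can drag the corresponding vertical tangency of the Cerf graphic of $G_s$ along with each critical point while leaving $G_s$ essentially unchanged elsewhere. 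This reduces the problem to lifting the finitely many crossings and births.

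At a critical-value crossing of $\tau_s$ at time $s_*$, two vertical tangencies of the Cerf graphic of $G_s$ must exchange $z$-coordinates while their $t$-coordinates pass each other. Using Lemma~\ref{L:ConstantOutsideHandle} to put each inside a disjoint standard-handle neighborhood, this is realised by an explicit local isotopy of $G_s$ supported in the union of the two handles; the effect on the Cerf graphic is at worst a Reidemeister-II fold crossing, which is an allowed codimension-one event in a generic homotopy of Morse $2$-functions. Because $\tau_s$ is indefinite and ordered, the indices involved force only indefinite folds to appear, and the moves can be done without creating a definite fold.

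For a birth of a cancelling $k,k+1$ pair at time $s_*$ I use the ``eye followed by kink'' procedure indicated in the introduction (Figure~\ref{F:SquareDefFromTauBirth}). In a coordinate ball around the birth point, first introduce a small eye of indefinite folds via an eye-birth singularity in the Cerf graphic of $G_s$; then apply a swallowtail (the ``kink'') to one arc of the eye. The combination produces exactly two new vertical tangencies, which by construction form a forward $k$-handle and a forward $(k+1)$-handle in the sense of Lemma~\ref{L:LocalSquareCritPtModel}, matching the pair born by $\tau_s$; Lemma~\ref{L:ForwardToBackward} is invoked afterwards if $\tau_s$ instead calls for backward handles. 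The resulting $t\circ G_s$ differs from $\tau_s$ only by a small, explicit perturbation supported near the birth (shifting the birth parameter slightly and inserting a harmless birth/death pair of cusps that is absorbed into neighbouring arcs of generic homotopies). Collecting all such perturbations over $s\in[0,1]$ gives the homotopy $\tau'_s$ and the arc of generic homotopies connecting $\tau_s$ to $\tau'_s$.

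Finally, I address fiber-connectedness by checking each of the three move types. Arcs of Morse functions and crossings do not alter the Cerf graphic up to isotopy and so preserve connectedness. For the eye-and-kink birth, the interior of the newly introduced eye has fibers obtained from the surrounding fiber by attaching a cancelling pair of indefinite handles of indices $k$ and $k+1$; since $\tau_s$ is ordered and indefinite, neither of these indices is extremal, and the attaching data (determined by how the eye and kink are inserted) can be chosen so that neither handle disconnects the fiber. The chief technical obstacle throughout is ensuring that the eye-and-kink construction is truly local in both $X$ and $I^2$, so that no unintended interaction with pre-existing folds of the Cerf graphic occurs and so that the correction from $\tau_s$ to $\tau'_s$ remains indefinite and ordered; this is what forces the ``arc of generic homotopies'' hedge in the statement and is the step demanding the most careful bookkeeping.
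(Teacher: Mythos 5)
The overall skeleton of your argument—decompose $\tau_s$ into arcs, crossings and births, lift births via ``eye followed by kink,'' and lift crossings by moving the corresponding vertical tangencies past each other—matches the paper's strategy, and your birth step is essentially the construction of Figure~\ref{F:SquareDefFromTauBirth}. The gap is in how you handle critical-value crossings of $\tau_s$.

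You assert that a crossing is ``realised by an explicit local isotopy of $G_s$ supported in the union of the two handles,'' with the Cerf graphic changing by ``at worst a Reidemeister-II fold crossing.'' This cannot be right. If $p$ is a critical point of $\tau$ at $\tau(p)=2/3$ and $q$ at $\tau(q)=1/3$ and we want to lower $\tau(p)$ below $\tau(q)$, the standard-handle neighborhoods of $p$ and $q$ given by Lemmas~\ref{L:LocalSquareCritPtModel} and~\ref{L:ConstantOutsideHandle} live in disjoint slabs of $\tau$; between them sits the product $\tau^{-1}[1/3+\epsilon,2/3-\epsilon]\cong[1/3+\epsilon,2/3-\epsilon]\times M$ with $G$ restricted there being a generic homotopy $g_t$ of vertical Morse functions. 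To carry the fold through that region one must arrange that the framed descending sphere for $p$ lies in a level set of $g_t$ at \emph{every} intermediate $t$; this is precisely the content of Lemma~\ref{L:HorizCritSwitch}, which invokes Theorems~\ref{T:MorseExistence},~\ref{T:MorseDeformationExistence} and crucially~\ref{T:MorseDefOfDefsExistence} (a $2$-parameter result, since we now have both $s$ and $t$) and is nowhere near a local isotopy supported in the union of two handle neighborhoods. Your proposal, as written, simply omits the hard part of the lemma. Relatedly, your fiber-connectedness discussion is incorrect on the same step: you say crossings ``do not alter the Cerf graphic up to isotopy and so preserve connectedness,'' but crossings do change the graphic, and the paper needs a specific additional argument when $n=4$ and the attaching sphere has codimension one in the fiber (the $k=2$ case), keeping the attaching circle nonseparating throughout by carrying a dual circle along. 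You should either prove a version of Lemma~\ref{L:HorizCritSwitch} (including its use of the $2$-parameter existence theorem and its codimension-one caveat) or explicitly reduce the crossing step to it, rather than treating it as a local isotopy.
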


\begin{proof}[Proof of Lemma~\ref{L:SquareDefFromIDef}]
We will show how to construct generic indefinite homotopies $G_s$ such that $t \circ G_s$ is a generic homotopy between Morse functions which realizes either (1) a desired birth of a cancelling pair or (2) a desired crossing of two critical points. The given $\tau_s$ will then tell us where the births should be and which critical points should cross when. Making the births occur at these points and the right critical points cross at the right time, and pre- and post-composing $G$ with ambient isotopies, we can construct $G_s$ so that $\tau'_s = t \circ G_s$ is connected to $\tau_s$ by an arc of generic homotopies. (To see this, first note that an arc of Morse functions can always be realized by pre- and post-composing with ambient isotopies. This is because we can post-compose with an isotopy so that the critical values are constant, then pre-compose so that the critical points are constant, then pre-compose again to arrange that the homotopy is constant on neighborhoods of the critical points, and finally 
integrate a time-like vector field to get the full isotopy. Then note that births and critical point crossings can be localized by using bump functions to keep given homotopies stationary for short time periods outside standard neighborhoods.)

We deal with these two moves as follows:

\begin{enumerate}
 \item The easiest way to arrange a birth is to arrange that, inside the ball in which the birth should occur, $G$ has the form $G(x_1, \ldots, x_n) = (-x_1^2 - \ldots - x_k^2 +x_{k+1} + x_{k+2}^2 + \ldots + x_n^2, x_{k+1})$. This is a fold which is index $k$ looked at from left to right and the image of the fold set is the line $z=t$. We can arrange this via, for example, an eye birth as illustrated in Figure~\ref{F:SquareDefFromTauBirth}; there are two cases, one which is indefinite for $1 \leq k \leq n-3$ and one which is indefinite for $2 \leq k \leq n-2$. Now let $f_s(x)$ be a function which equals $x^3 - s x$ for $x$ in a neighborhood of $0$, has no critical points outside that neighborhood for any $s \in [-\epsilon,\epsilon]$, and is linear in $x$ and independent of $s$ outside a slightly larger neighborhood. Finally let $G_s(x_1, \ldots, x_n) = (-x_1^2 - \ldots - x_k^2 + f_s(x_{k+1}) + x_{k+2}^2 + \ldots + x_n^2, x_{k+1})$, the result of which is also illustrated in Figure~\ref{F:SquareDefFromTauBirth}.
\begin{figure}[ht!]
\labellist
\small\hair 2pt
\pinlabel $n-2-k$ [t] at 40 118
\pinlabel $n-1-k$ [b] at 40 140
\pinlabel $k$ [b] at 164 154
\pinlabel $k+1$ [b] at 164 114
\pinlabel $k$ [br] at 145 136
\pinlabel $n-1-k$ [t] at 40 30
\pinlabel $n-k$ [b] at 40 52
\pinlabel $k$ [b] at 164 18
\pinlabel $k-1$ [b] at 164 58
\pinlabel $k$ [br] at 181 28
\endlabellist
\begin{center}
\includegraphics{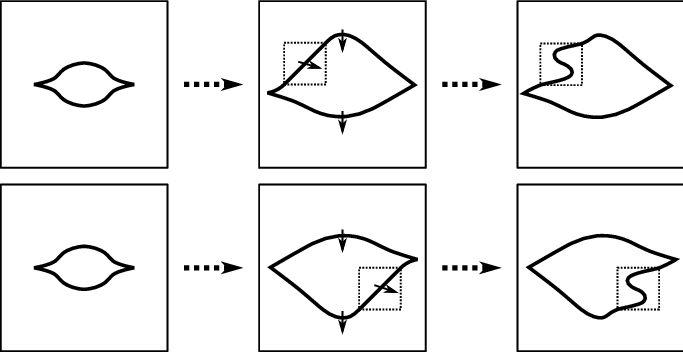}
\caption{Two ways to realize a birth of a cancelling $k$--$(k+1)$ pair in $\tau_s$ via a homotopy of $G$; the birth occurs inside the dotted square.}
\label{F:SquareDefFromTauBirth}
\end{center}
\end{figure}
As long as this eye birth is indefinite, it will not disconnect fibers.

\item 
Since $\tau_s$ is ordered, we only need to switch critical values of the same index. Lemma~\ref{L:HorizCritSwitch} below deals with more general critical values switches, but applies in particular to this case. (We have separated that lemma from the proof of this lemma because we will need the more general case in the following section.) 

\end{enumerate}

\end{proof}

\begin{lemma} \label{L:HorizCritSwitch}
 Given an indefinite Morse $2$--function $G \co X \to I^2$, let $\tau = t \circ G$. Consider two critical points $p$ and $q$ of $\tau$, with $p$ having index $k \leq n/2$, with $q$ having index $l \geq k$, with $\tau(q) < \tau(p)$ and with no other critical values between $\tau(q)$ and $\tau(p)$. Fix a generic gradient-like vector field for $\tau$. Then there exists an indefinite generic homotopy $G_s$ with $G_0 = G$ such that $\tau_s = t \circ G_s$ agrees with $\tau$ outside a neighborhood of the descending manifold for $p$ and, inside this neighborhood, no new critical points are born and $\tau_s(p)$ decreases monotonically from $\tau(p)$ to $\tau(q)-\epsilon$ for some small $\epsilon > 0$. Furthermore, if $G$ is fiber-connected then we can arrange that $G_s$ is fiber-connected for all $s$.
\end{lemma}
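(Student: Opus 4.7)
The plan is to adapt the standard Morse-theoretic argument for lowering a critical value, now at the level of the Morse $2$--function $G$. The key observation is that the index hypothesis $k \leq l$ implies that, for the given generic gradient-like vector field $V$ for $\tau$, the descending disk $D_p$ of $p$ and the ascending disk $A_q$ of $q$ are disjoint: in any intermediate level set $\tau^{-1}(c)$ with $c \in (\tau(q), \tau(p))$, the descending sphere of $p$ has dimension $k-1$ and the ascending sphere of $q$ has dimension $n-l-1$, so the expected intersection dimension $k-l-1$ is strictly negative. In particular $q \notin D_p$.

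First I would put $G$ into a convenient normal form along $D_p$. Apply Lemma~\ref{L:LocalSquareCritPtModel} so that near $p$, in coordinates, $G = \Gamma_k^{\pm} + (\tau(p), \zeta(p))$, and then apply Lemma~\ref{L:ConstantOutsideHandle} so that, in a $\tau$-slab of width $2\epsilon$ around $\tau(p)$ and outside a standard handle neighborhood $H$ of $p$, the function $G$ has the product form $(t, x) \mapsto (t, g(x))$ for some fixed Morse function $g$. Since $\tau$ has no critical values in $(\tau(q), \tau(p))$ and $D_p$ misses $q$, I can use the flow of $V$ to extend this product form over a tubular neighborhood $N$ of $D_p \cap \tau^{-1}([\tau(q) - \epsilon, \tau(p) + \epsilon])$, chosen thin enough to miss $q$ entirely, so that $\tau|_N$ has a single critical point at $p$ and $G$ on $N \setminus H$ is in product form.

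With this normal form fixed, I would construct the homotopy $G_s$ by translating the handle $H$ downward in the product direction. Choose a smooth, monotonically decreasing function $s \mapsto c_s$ from $\tau(p)$ to $\tau(q) - \epsilon$, and for each $s$ let $H_s$ be a copy of the standard handle neighborhood centered at the point on $D_p$ at $\tau$-level $c_s$. Define $G_s$ to equal $\Gamma_k^{\pm} + (c_s, \zeta(p))$ on $H_s$ (using parallel transport of the handle coordinates along $V$ to give the same linking to the ambient product structure) and to equal $(t, x) \mapsto (t, g(x))$ on $N \setminus H_s$. Outside $N$, set $G_s = G$. The gluing across $\partial N$ is automatic because both sides have the same product form, and the gluing across $\partial H_s$ matches by the local structure of a forward or backward handle sitting inside a product.

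The main verification is that $G_s$ is an indefinite generic homotopy of Morse $2$--functions satisfying the conclusions. Indefiniteness and the absence of newly born critical points are immediate, since the handle's index $k$ is preserved and we merely translate the single critical point of $t \circ G_s$ along $D_p$. In $I^2$, the vertical tangency of the fold through $p$ slides leftward, and the only potentially nontrivial events occur when $c_s$ crosses $\tau(q)$: generically this produces a finite collection of Reidemeister-II (and possibly Reidemeister-III or cusp-fold) fold crossings with folds through $q$ and nearby critical points, all of which are permitted generic events. The main obstacle I expect is the bookkeeping needed to simultaneously ensure $N$ is thin enough to miss $q$ yet thick enough to contain $H_s$ for all $s$; this is handled by first shrinking the handle radius in Lemma~\ref{L:ConstantOutsideHandle} and then choosing the tube radius appropriately, using the strict inequality $k - l - 1 < 0$ together with standard tubular neighborhood theory for $D_p$.
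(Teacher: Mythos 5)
Your high‑level strategy matches the paper's: use the index inequality $k \leq l$ to make the descending disk $D_p$ miss $q$, put $G$ into the standard $\Gamma_k^\pm$ form near $p$ and into ``constant in $t$'' form nearby (Lemmas~\ref{L:LocalSquareCritPtModel},~\ref{L:ConstantOutsideHandle}), and then slide the $k$--handle of $\tau$ down along $D_p$ past $q$. The dimension count $k-l-1<0$ is correct, and once a genuine product normal form exists on a tube around $D_p$, the translation you describe does give a valid generic homotopy.

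The gap is the sentence ``I can use the flow of $V$ to extend this product form over a tubular neighborhood $N$ of $D_p\cap\tau^{-1}([\tau(q)-\epsilon,\tau(p)+\epsilon])\ldots$ so that $G$ on $N\setminus H$ is in product form.'' That is precisely the hard step, and it is not automatic. The vector field $V$ is gradient-like for $\tau=t\circ G$ only; its flow identifies $\tau$-level sets but does nothing to control $z\circ G$. As $t$ decreases from $\tau(p)$ to $\tau(q)-\epsilon$, the descending sphere $D_p\cap\tau^{-1}(t)$ is a $(k-1)$-sphere in the $(n-1)$-manifold $\tau^{-1}(t)$, and generically $z\circ G$ is \emph{not} constant on it — equivalently, the ``vertical'' Morse function $g_t=z\circ G|_{\tau^{-1}(t)}$ does not carry the descending sphere to a single level. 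What one needs is that $g_t$ be \emph{standard with respect to} the attaching-sphere embedding $\phi_p$ at a fixed height $z_p$ for every intermediate $t$, in the sense of Section~4; arranging this requires first homotoping $g_t$ at one end via Theorems~\ref{T:MorseExistence} and~\ref{T:MorseDeformationExistence}, and then over the whole $t$-interval via Theorem~\ref{T:MorseDefOfDefsExistence}, exactly as in the paper's proof. Those theorems — the real content of Section~4, including the dimension-count needed to make the descending sphere avoid the other handle data as it is being leveled off — are what your proposal silently replaces with ``flow along $V$ and choose $N$ thin.'' Thinness of $N$ does not help: the obstruction is not that $D_p$ hits $q$ but that $z\circ G$ wanders along $D_p$. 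Relatedly, once $g_t$ is put in this controlled form, one must patch it across the slab near $q$ (which is why the paper invokes Lemma~\ref{L:ConstantOutsideHandle} near $q$, not near $p$, and why it separates the cases of $q$ being a forward versus a backward $l$-handle, possibly converting one to the other with Lemma~\ref{L:ForwardToBackward}). Your proposal does not engage with either point.
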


\begin{proof}
 
Suppose that $\tau(p) = 2/3$ and $\tau(q)=1/3$ and that there are no other critical values
between $1/3$ and $2/3$. 
Using Lemmas~\ref{L:LocalSquareCritPtModel} and~\ref{L:ForwardToBackward}, we arrange for local coordinates near $p$ with respect to which $G$ has the form $G = \Gamma_k^+(\mathbf{x})+(2/3,z_p)$, a forward $k$--handle, and we arrange for a disjoint coordinate system near $q$ with respect to which $G = \Gamma_l^\pm(\mathbf{x})+(1/3,z_q)$, a forward or backward $l$--handle. We further use Lemma~\ref{L:ConstantOutsideHandle} to arrange that $G$ is constant in the $t$ direction outside a neighborhood of $q$ for $t \in [1/3-\epsilon,1/3+\epsilon]$. First we consider the case that $q$ is a a backward $l$--handle. 

Since there are no critical points between $p$ and $q$, $\tau^{-1}([1/3+\epsilon,2/3-\epsilon])$ is diffeomorphic to $[1/3+\epsilon,2/3-\epsilon] \times M$ for an $(n-1)$--manifold $M$ (each component of which is a connected cobordism between non-empty manifolds), and via this diffemorphism $G$ has the form $(t,p) \mapsto (t,g_t(p))$ where $g_t$ is an indefinite generic homotopy between Morse functions on $M$. Furthermore, because of the local models we have found for $G$ near $p$ and $q$, we have an embedding $\phi_q$ of a neighborhood of the ascending sphere for $q$ and an embedding $\phi_p$ of a neighborhood of the descending sphere for $p$ in $M$ such that $g_{1/3+\epsilon}$ is standard with respect to $\phi_q$ at height $z_q$ while $g_{2/3-\epsilon}$ is standard with respect to $\phi_p$ at height $z_p$. 

We will now sequentially improve $g_t$ in preparation for lowering $p$ past $q$. This is illustrated in Figure~\ref{F:CrossingHorizCritPts}, where ``lowering'' $p$ really means moving $p$ to the left. At each stage we produce an improved $g_t$, which must be connected to the preceding $g_t$ by a homotopy $g_{s,t}$. The homotopy $g_{s,t}$ is produced by appealing to Theorem~\ref{T:MorseDefOfDefsExistence}. The one hitch here, as pointed out helpfully by our anonymous referee, is that Theorem~\ref{T:MorseDefOfDefsExistence} requires ordered Morse homotopies as input, and at certain stages our two Morse homotopies may have connected level sets without necessarily being ordered. For this, we need a parameterized version of Lemma~\ref{L:Connected2Ordered}. Now we have a given homotopy $g_t$ which has connected level sets and must be made ordered, respecting standardization with respect to $\phi_p$ and/or $\phi_q$ (see below). Here we simply pull the arcs of critical points down or up, as appropriate, using exactly the same argument as in last two paragraphs of the proof of Theorem~\ref{T:MorseDeformationExistence}.

Now here are the sequence of improvements of $g_t$: First we use Lemma~\ref{L:Connected2Ordered}, Lemma~\ref{L:ConstantOutsideHandle}, Theorem~\ref{T:MorseExistence} and Theorem~\ref{T:MorseDeformationExistence} to arrange that $g_t$ is ordered and standard with respect to $\phi_p$ at height $z_p$ for $t$ near $1/3$. Similarly we arrange for $g_t$ to be ordered near $2/3$, keeping it standard with respect to $\phi_p$ at height $z_p$ near $2/3$. (Note the importance of the $t$--independence of $G$ outside a neighborhood of $q$, as given by Lemma~\ref{L:ConstantOutsideHandle}, so that we can smoothly connect the behavior of $g_{1/3+\epsilon}$ to the behavior of $g_{1/3-\epsilon}$, and similarly for the $g_{2/3-\epsilon}$ and $g_{2/3+\epsilon}$.) Then we use Theorem~\ref{T:MorseDeformationExistence} to arrange that $g_t$ is standard with respect to $\phi_p$ at height $z_p$ (and ordered) for all intermediate values of $t$. The argument from the preceding paragraph connects these improved $g_t$'s by the appropriate homotopies of homotopies of Morse functions, which turn into homotopies of Morse $2$--functions. Finally, having arrange the standardness of $g_t$ for the point $p$ on the entire interval $[1/3-\epsilon, 2/3+\epsilon]$, we can easily lower $p$ past $q$.
\begin{figure}
\labellist
\small\hair 2pt
\pinlabel $p$ [r] at 104 185
\pinlabel $q$ [l] at 48 145
\pinlabel $z_p$ [l] at 153 185
\pinlabel $z_q$ [l] at 153 145
\pinlabel $\frac{1}{3}$ [t] at 49 121
\pinlabel $\frac{2}{3}$ [t] at 105 121
\pinlabel $t$ [t] at 76 121
\pinlabel $z$ [r] at 1 167
\pinlabel $s_1$ [b] at 76 208
\pinlabel $s_2$ [b] at 257 208
\pinlabel $s_3$ [b] at 76 89
\pinlabel $s_4$ [b] at 257 89

\endlabellist
\begin{center}
\includegraphics{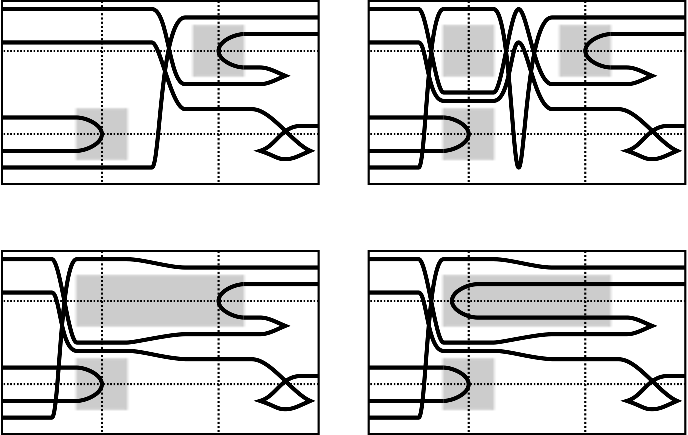}
\caption{Passing two critical points past each other; the homotopy is illustrated at four successive values of $s$. Shaded squares are regions where $g_t$ is standard with respect to $\phi_p$ or $\phi_q$. To pass from $s_1$ to $s_2$, we modify $g_t$ in a neighborhood of $t=1/3$ so that the framed attaching sphere for $p$ lies in a level set at $t=1/3$. Going from $s_2$ to $s_3$, we modify $g_t$ between $t=1/3$ and $t=2/3$ so that this attaching sphere lies in a level set on the whole interval $[1/3,2/3]$.}
\label{F:CrossingHorizCritPts}
\end{center}
\end{figure}

In the case that $q$ is a forward $l$--handle, we use the same argument as above but look first at $M' = \tau^{-1}(1/3-\epsilon)$, with a vertical Morse function $g_{1/3-\epsilon} \co M' \to I$ which is standard with respect to the descending manifold for $q$ at height $z_q$. We modify this to be standard with respect to the descending manifolds for both $p$ and $q$, using the $t$--independence of $G$ outside a neighborhood of $q$ for $t \in [1/3-\epsilon,1/3+\epsilon]$, so that again we get $g_{1/3+\epsilon} \co M \to I$ to be standard with respect to the descending manifold for $p$ and proceed as above.

Regarding fiber-connectedness, note that the only potential problem arises when the attaching sphere for $p$ has codimension $1$ in the fiber, and then we need to make sure that it never separates the fibers as we lower $p$ past $q$. Thus we only need to worry when $k=n-2$, but since $k \leq n/2$ and $n \geq 4$, the only case of concern is $n=4$ and $k=2$. (To a $4$--manifold topologist this is, of course, the most interesting case.) Thus, when we apply Theorems~\ref{T:MorseExistence} and ~\ref{T:MorseDeformationExistence} and Lemma~\ref{L:Connected2Ordered} to arrange that $g_t$ is standard with respect to $\phi_p$ at height $z_p$ for all values of $t$ between $1/3$ and $2/3$, we actually need to do a little more: We need to ensure that the descending sphere for $p$ remains nonseparating in its fiber for all $t$. Thus we need a slight improvement on Theorem~\ref{T:MorseDeformationExistence} which says that, in this special case $m=n-1=3$ and $l_i=k-1=1$, we can maintain the nonseparating property throughout a 
generic homotopy between Morse functions. The easiest way to do this is to arrange that a dual circle to the attaching circle also lies in the fiber throughout the homotopy, i.e. that the union of two circles intersecting transversely at one point stays in the fiber. Since the proof of the relevant part of Theorem~\ref{T:MorseDeformationExistence} simply involves counting dimensions and appealing to transversality, this slight improvement is straightforward.

\end{proof}

The other lemma needed in the proof of Theorem~\ref{T:SquareUniqueness} is:

\begin{lemma} \label{L:SquareDefNearCritPoints}
 Given two indefinite Morse $2$--functions $G, G' : X \to I^2$ such that $z
\circ G |_{M_0} = z \circ G' |_{M_0} = \zeta_0$, $z \circ G |_{M_1} = z \circ
G' |_{M_1} = \zeta_1$ and $t \circ G = t \circ G' = \tau : X \to I$, there
exist an $\epsilon > 0$ and a generic indefinite homotopy of Morse $2$--functions
$G_s : X \to I^2$ such that:
\begin{enumerate}
 \item $G_0 = G$,
 \item $t \circ G_s = \tau$ for all $s$, and
 \item for each critical value $t_*$ of $\tau$, letting $X_* =
\tau^{-1}[t_*-\epsilon,t_*+\epsilon]$, we have $G_1|_{X_*} = G'|_{X_*}$.
\end{enumerate}
If $G$ and $G'$ have all fibers connected then we can arrange for $G_s$ to have all fibers connected as well. 
\end{lemma}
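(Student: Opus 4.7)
The approach is to work one critical value of $\tau$ at a time. Since $\tau$ is Morse, distinct critical points lie at distinct critical values, so for $\epsilon>0$ small the closed slabs $X_*=\tau^{-1}[t_*-2\epsilon,t_*+2\epsilon]$ around distinct critical values $t_*$ are pairwise disjoint. I will construct a homotopy $G_s$ supported inside each such slab independently, so the individual constructions combine to give the required global $G_s$. Fix one critical value $t_*$ with its unique critical point $p$ of some index $k$, and choose a chart around $p$ in which $\tau(\mathbf{x})=\mu_k^n(\mathbf{x})+t_*$.

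First I normalize both $G$ and $G'$ using this \emph{common} chart. Applying Lemma~\ref{L:LocalSquareCritPtModel} to each yields indefinite arcs of Morse $2$--functions putting the maps into the standard local form $\Gamma_k^{\sigma}+(t_*,z)$ near $p$, with signs $\sigma_G,\sigma_{G'}$ and heights $z_G,z_{G'}$. Indefiniteness forces $\sigma=+$ when $k=1$ and $\sigma=-$ when $k=n-1$ (the other option would produce a definite fold), while for $2\le k\le n-2$ Lemma~\ref{L:ForwardToBackward} lets us switch one sign if necessary; so I can arrange $\sigma_G=\sigma_{G'}=:\sigma$. Then Lemma~\ref{L:ConstantOutsideHandle}, applied to both maps, puts them in product form $(t,x)\mapsto(t,g(x))$ and $(t,x)\mapsto(t,g'(x))$ outside a small handle neighborhood $H$ of $p$, with $g,g'$ free of critical values in a small vertical window around $z_G,z_{G'}$ respectively. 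Every preliminary homotopy is supported in $X_*$ and keeps $t\circ G_s=\tau$.

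Next I match heights. Using the critical-value-free vertical slab around the attaching sphere of $g$ supplied by Lemma~\ref{L:ConstantOutsideHandle}, I modify $g$ by a fibrewise vertical translation inside this slab so that the attaching sphere moves from height $z_G$ to height $z_{G'}$, and simultaneously translate the standard handle model in the $z$--direction so that $G_s$ remains a Morse $2$--function with $t\circ G_s=\tau$ throughout. If $|z_G-z_{G'}|$ exceeds the width of the clean slab, I first widen it by using Theorem~\ref{T:MorseDeformationExistence} to push obstructing critical points of $g$ vertically out of the way before performing the shift. This is the step I expect to be the main obstacle: moving the image of $p$ vertically while the projection $\tau$ is frozen forces a coordinated modification of both the handle model and the outer Morse function, and one must take care to stay indefinite (in particular, the attaching sphere should not collide with another critical value of $g$) all along the shift.

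Once the heights agree, the handle pieces of $G$ and $G'$ coincide identically in the chart, and outside the handle both $g$ and $g'$ are standard with respect to the same framed attaching data at the same height. Apply Theorem~\ref{T:MorseDeformationExistence} to produce an indefinite ordered generic homotopy $g_s$ from $g$ to $g'$ preserving this standardness property; then setting $G_s(t,x)=(t,g_s(x))$ on the product region glues to the fixed handle model in $H$ to give the desired homotopy on $X_*$. For the fiber-connected version, first invoke Lemma~\ref{L:Connected2Ordered} to make $g$ and $g'$ ordered before applying Theorem~\ref{T:MorseDeformationExistence}, so that orderedness plus indefiniteness keeps each level set of $g_s$ connected; the handle normalizations and the height shift are local modifications whose effect on connectedness can be checked directly.
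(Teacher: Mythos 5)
Your proof follows the paper's route: normalize each horizontal critical point to a standard forward/backward handle model via Lemma~\ref{L:LocalSquareCritPtModel}, pass to product form on the complement of the handle via Lemma~\ref{L:ConstantOutsideHandle}, and conclude by applying Theorem~\ref{T:MorseDeformationExistence} to the resulting fixed Morse functions on a slice. Where you go beyond the paper's one-line write-up is in explicitly matching the forward/backward sign (Lemma~\ref{L:ForwardToBackward}, with the sign forced when $k=1$ or $k=n-1$) and the vertical height of the critical image; both are genuinely needed, since a priori $z\circ G(p) \neq z\circ G'(p)$ and the two folds may open opposite ways, and the paper's assertion that Lemma~\ref{L:LocalSquareCritPtModel} alone makes $\zeta$ and $\zeta'$ agree near $p$ silently subsumes exactly these normalizations. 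Your appeal to Lemma~\ref{L:Connected2Ordered} for the fiber-connected case is also the right move.
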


\begin{proof}
Again, we use Lemma~\ref{L:LocalSquareCritPtModel} to standardize $G =
(\tau,\zeta)$ and $G' = (\tau,\zeta')$ near each critical point of $\tau$, so
that $\zeta$ and $\zeta'$ are equal in a neighborhood of each critical point. Then use Lemma~\ref{L:ConstantOutsideHandle} to make $G$ and $G'$ ``constant in the $t$ direction'' inside each $\tau^{-1}[t_*-\epsilon,t_*+\epsilon]$ but away from the critical point. Thus if we homotope $\zeta|_{\tau^{-1}(t_*-\epsilon)}$ to $\zeta'|_{\tau^{-1}(t_*-\epsilon)}$ without changing $\zeta$ on the attaching region for the handle associated to this critical point, this homotopy can be spread out over $[t_*-\epsilon,t_*+\epsilon]$ to give the desired homotopy of $G$. The homotopy from $\zeta|_{\tau^{-1}(t_*-\epsilon)}$ to $\zeta'|_{\tau^{-1}(t_*-\epsilon)}$ is given by Lemma~\ref{L:Connected2Ordered} followed by Theorem~\ref{T:MorseDeformationExistence}.
\end{proof}

\section{The main results} \label{S:MainProofs}

In this section we prove the theorems stated in the introduction. We will apply Thom-Pontrjagin type arguments to reduce to the case of maps to disks, so first we show quickly how the results of the preceding two sections immediately give our main theorems in the case when we are mapping to $B^1$ or $B^2$.

\begin{proof}[Proof of Theorem~\ref{T:1Existence} for maps to $B^1$]
 This is exactly Theorem~\ref{T:MorseExistence}, with ordered implying fiber-connected when $m > 2$.
\end{proof}

\begin{proof}[Proof of Theorem~\ref{T:1Uniqueness} for maps to $B^1$]
 Given two indefinite Morse functions $g_0,g_1 \co M^m \to B^1$, we need to construct an indefinite generic homotopy $g_t$ connecting them, and if $m \geq 3$ and $g_0$ and $g_1$ are fiber-connected we need to arrange that $g_t$ is fiber-connected. Apply Lemma~\ref{L:Connected2Ordered} to homotope $g_0$ and $g_1$ to be ordered, without destroying fiber-connectedness if they are given as fiber-connected. Then apply Theorem~\ref{T:MorseDeformationExistence}.
\end{proof}

\begin{proof}[Proof of Theorem~\ref{T:Existence} for maps to $B^2$]
 Here we are given an indefinite Morse function $g \co \partial X^n \to S^1$ and we need to construct an indefinite Morse $2$--function $G \co X \to B^2$ with $G|_{\partial X} = g$. When $n \geq 4$ and $g$ is fiber-connected we need to arrange that $G$ is fiber-connected. Arbitrarily identify $S^1$ with $\partial I^2$ so that $g$ has no critical values in $I \times \{0,1\} \subset \partial  I^2$, and then use this identification to realize $X$ as a cobordism from $M_0 = g^{-1}(\{0\} \times I)$ to $M_1 = g^{-1}(\{1\} \times I)$. Apply Lemma~\ref{L:Connected2Ordered} to begin the construction of $G$ on a collar neighborhood of $\partial X$ so as to reduce to the case where $g$ is ordered on $M_0$ and $M_1$, without destroying fiber-connectedness if we had it to begin with. This gives us $\zeta_0$ and $\zeta_1$ as in the preceding section, in preparation for Theorem~\ref{T:SquareExistence}. Also apply Theorem~\ref{T:MorseExistence} to $X$ to produce the desired indefinite, ordered Morse function $\tau$. Finally 
apply Theorem~\ref{T:SquareExistence} to produce $G$.
\end{proof}

\begin{proof}[Proof of Theorem~\ref{T:Uniqueness} for maps to $B^2$]
Now we are given two indefinite Morse $2$--functions $G_0, G_1 \co X^n \to B^2$ which agree on $\partial X$, and, assuming that $n \geq 4$, we need to construct an indefinite generic homotopy $G_s$ connecting them. When $G_0$ and $G_1$ are fiber-connected we need to arrange that $G_s$ is fiber-connected. In order to reduce to Theorem~\ref{T:SquareUniqueness}, we first proceed as in the previous proof, identifying $B^2$ with $I^2$ and applying Lemma~\ref{L:Connected2Ordered} to get $\zeta_0$ and $\zeta_1$ ordered as in Theorem~\ref{T:SquareUniqueness}. Now we need to arrange that $\tau_0 = t \circ G_0$ and $\tau_1 = t \circ G_1$ are ordered. We do this with Lemma~\ref{L:HorizCritSwitch}, switching critical values so as to order them by index, making sure to always move critical points of index $\leq n/2$ to the left and critical points of index $> n/2$ to the right. Now we can apply Theorem~\ref{T:SquareUniqueness}.
\end{proof}

Now we can consider more interesting topology in our target spaces. For existence of indefinite, fiber-connected $S^1$--valued Morse functions, for example, we need to arrange for one fiber to be connected, and then we can cut open along that fiber and reduce to the $B^1$--valued case. These proofs are essentially a refinement of the Thom-Pontrjagin construction, in which maps to $S^n$ and homotopies between them are constructed and characterized by specifying the fiber over a point. However, we need to be more careful than in the basic Thom-Pontrjagin construction because of the fact that we need to arrange for connected fibers, and because we want to construct homotopies which connect fibers without introducing extraneous components along the way. 

In the traditional Thom-Pontrjagin construction for maps from $S^n$ to $S^k$, a framed codimension--$k$ submanifold of $S^n$ determines a map by sending the submanifold to the north pole and the framed normal $k$--disk bundle to $S^k$, with boundary and the complement of the normal disk bundles all mapping to the south pole. A framed cobordism in $I \times S^n$ between two such framed submanifolds determines a homotopy between their maps in the same way. 

In our setting, the framed cobordisms are easy to see but we require more control on the associated homotopies than we would get by applying the standard Thom-Pontrjagin arguments. In particular, if we want to construct a homotopy which connects two components of a disconnected fiber, we will need to choose an arc connecting the components and then construct a homotopy with support in a neighborhood of the arc, such that, during the homotopy, extraneous components are not introduced.

We begin with the existence and uniqueness results for indefinite, fiber-connected, Morse functions. As mentioned earlier, we already have these results when mapping to $I$, so we focus now on maps to $S^1$. 

\begin{proposition} \label{P:Arcs}
 Given a Morse function $g: M^m \to S^1$ which is surjective on $\pi_1$, with $M$ connected, and given a regular value $q \in S^1$, if $g^{-1}(q)$ is disconnected then we can find two components of $g^{-1}(q)$ connected by an arc $a \subset M$ which intersects $g^{-1}(q)$ only at its endpoints and which is disjoint from either $g^{-1}[q-\epsilon,q)$ or $g^{-1}(q,q+\epsilon]$, for some small $\epsilon>0$.
\end{proposition}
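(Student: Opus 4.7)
The plan is to analyze the manifold obtained by cutting $M$ open along $F := g^{-1}(q)$ and use a graph associated to this decomposition to extract the desired arc.

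Identify $S^1$ with $\R/\Z$ so that $q = 0$. Cut $M$ along $F$ to obtain a manifold $M'$ whose boundary is $F_- \sqcup F_+$ (two copies of $F$ whose identification recovers $M$), and lift $g$ to a smooth map $g' : M' \to [0,1]$ with $g'^{-1}(0) = F_-$ and $g'^{-1}(1) = F_+$. Write $F = F_1 \sqcup \cdots \sqcup F_k$ with $k \geq 2$. Form a directed graph $\Gamma$ whose vertices are the components of $M'$ and whose edges $e_1, \ldots, e_k$ are in bijection with the components of $F$, with $e_i$ directed from the vertex containing $F_{i,-}$ to the vertex containing $F_{i,+}$. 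The out-degree (resp.\ in-degree) of a vertex $C$ equals the number of components of $F_- \cap C$ (resp.\ $F_+ \cap C$), and $\Gamma$ is connected because $M$ is.

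The crux is showing that some vertex of $\Gamma$ has in-degree or out-degree at least $2$. If instead every vertex has both degrees at most $1$, then the connected graph $\Gamma$ is either a directed path $C_1 \to \cdots \to C_r$ or a directed cycle of length $k$. Define $\tilde g := g' - (i-1)$ on $C_i$: at each interior gluing $F_{a_i,-} \sim F_{a_i,+}$ (where $e_{a_i}$ denotes the edge from $C_i$ to $C_{i+1}$) the two sides both contribute $\tilde g = 1 - i$, so $\tilde g$ is well-defined and smooth. In the path case this gives a smooth lift $\tilde g : M \to \R$ of $g$, so $g$ is null-homotopic, contradicting $g_*\pi_1(M) = \pi_1(S^1)$. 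In the cycle case the same formula taken modulo $k$ produces a map $\tilde g : M \to \R/k\Z$ realizing $g$ as the composition with the degree-$k$ cover $\R/k\Z \to \R/\Z = S^1$, so $g_*\pi_1(M) \subseteq k\Z \subsetneq \Z$, again a contradiction.

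It follows that some component $C$ of $M'$ contains two components of $F_-$, say $F_{1,-}$ and $F_{2,-}$; if instead some component contained two components of $F_+$, the same argument gives the statement with $q + \epsilon$ in place of $q - \epsilon$. The complement $C \setminus \partial_+ C$ (where $\partial_+ C := C \cap F_+$) is connected, because $\partial_+ C$ has a collar in $C$ and removing it leaves a space homotopy equivalent to $C$. Choose an embedded arc in $C \setminus \partial_+ C$ from $F_{1,-}$ to $F_{2,-}$ and push its interior into $\mathrm{int}(C)$. The resulting arc $a \subset M$ meets $F$ only at its endpoints, one in $F_1$ and one in $F_2$. Its image is compact and disjoint from $F_+$, so $g'(a) \subset [0, 1-\epsilon]$ for some $\epsilon > 0$; back in $M$ this means $a$ is disjoint from $g^{-1}[q-\epsilon, q)$. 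The main obstacle in the plan is the graph-theoretic claim above; given it, the construction of $a$ is routine.
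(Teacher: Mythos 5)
Your proof is correct and takes a genuinely different route from the paper's. The paper argues directly in $M$: start with any arc between two components of $F = g^{-1}(q)$, transverse to $F$, with opposite-sign crossings at its endpoints and with null-homotopic image in $S^1$ (this is where $\pi_1$-surjectivity is invoked, to correct the winding number by concatenating with a suitable loop); then repeatedly pass to an innermost subarc bounded by two oppositely-signed interior crossings, shortcutting along $F$ whenever such a subarc returns to the same component, until an arc remains with no interior crossings and endpoints in distinct components. You instead cut $M$ along $F$ to obtain $M'$ with a lift $g' : M' \to [0,1]$, encode the gluing data as a directed graph $\Gamma$, and use $\pi_1$-surjectivity to force some vertex to have in- or out-degree at least $2$: if not, $\Gamma$ is a directed path (so $g$ lifts to $\R$ and is null-homotopic) or a directed $k$-cycle (so $g$ factors through the degree-$k$ cover $\R/k\Z \to S^1$ and $g_*\pi_1(M) \subseteq k\Z$). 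A vertex with, say, out-degree $\geq 2$ is a component $C$ of $M'$ containing two pieces $F_{1,-}$, $F_{2,-}$ of $F_-$; an embedded arc in $C$ joining them and avoiding $\partial_+ C = C \cap F_+$ gives the result, with $\epsilon$ coming from compactness of $g'(a) \subset [0,1)$. Both arguments are sound and each buys something: your graph argument packages $\pi_1$-surjectivity into a crisp structural dichotomy and automatically produces an arc between \emph{distinct} components approaching $F$ from one fixed side, sidestepping the paper's shortcut step entirely; the paper's argument works in $M$ without introducing the cut-open manifold or an auxiliary combinatorial object, at the cost of a more delicate intersection-reduction induction.
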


\begin{proof}
 Begin with an arc connecting any two components of $g^{-1}(q)$, meeting $g^{-1}(q)$ transversely, intersecting $g^{-1}(q)$ with opposite signs at its endpoints, and projecting to a homotopically trivial loop in $S^1$. (Here we use $\pi_1$--surjectivity.) If this arc hits $g^{-1}(q)$ anywhere in its interior, either take an innermost arc intersecting distinct components with opposite signs at its endpoints, or, if an innermost arc starts and ends in the same component, shortcut in the obvious way, reducing the number of intersections.
\end{proof}

\begin{figure}
\labellist
\tiny
\hair 2pt
\pinlabel $M$ at 33 61
\pinlabel $N^1$ [b] at 103 64
\pinlabel $\tau (1)$ [r] at 1 45
\pinlabel $\tau (3/4)$ [b] at 14 45
\pinlabel $\tau (0)$ [r] at 0 16
\pinlabel $\tau (1/4)$ [t] at 13 12
\pinlabel $\tau$ [b] at 27 16
\pinlabel $\tau (t)$ [b] at 62 45
\pinlabel $g$ [t] at 84 30
\pinlabel $F^{m-1}$ [r] at 4 30
\pinlabel $\textnormal{Normal}$ at 52 6
\pinlabel $\textnormal{ $B^{m-1}-$ bundle}$ at 52 3
\pinlabel $\textnormal{Normal}$ at 67 15
\pinlabel $\textnormal{$S^{m-2}-$ bundle }$ at 67 12
\pinlabel $0$ at 114 17
\pinlabel $1$ at 138 17
\pinlabel $t$ at 124 17
\pinlabel $\textnormal{Graph of}$ at 139 53
\pinlabel $g\circ\tau:[0,1]\rightarrow N$ at 143 50
\pinlabel $\textnormal{Graph of}$ at 126 13
\pinlabel $\lambda:[0,1]\rightarrow N$ at 126 10
\pinlabel $g\circ\tau(t)$ at 122 58
\pinlabel $q$ [r] at 103 25
\endlabellist
\centering
\includegraphics[scale=2.2]{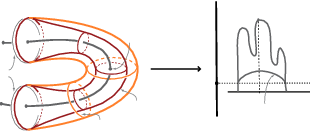}
\vspace*{2pt}
\caption{Connecting components of $g^{-1}(q)$.}
\label{F:Tubing}
\end{figure}
We will use such arcs to construct homotopies that connect components of $g^{-1}(q)$. For the basic idea,
referring to Figure~\ref{F:Tubing}, consider a given Morse function $g : M^m \to N^1$, with regular value $q$ and suppose the fiber $F=g^{-1}(q)$ is disconnected. Choose an arc $\tau : [0,1] \to M$ transverse to $F$, intersecting $F$ only at $\tau(1/4)$ and $\tau(3/4)$, connecting two components of $F$. (If $\tau$ intersects $F$ at more points, choose a shorter arc.) Note that $\tau$ has a normal $B^{m-1}$--bundle with boundary $S^{m-2}$--bundle. We wish to construct a homotopy $g_t$ starting at $g_0 = g$, supported inside this $B^{m-1}$--bundle, such that, watching the level sets $g_t^{-1}(q)$ as $t$ ranges from $0$ to $1$, we see two fingers stick out from $F$ following $\tau$, meeting at $\tau(1/2)$, and merging to achieve a surgery on the $0$--sphere $\{\tau(1/4),\tau(3/4)\}$. In other words, we connect the two components of $F$ with a tube along $\tau$, without introducing any extraneous components during the homotopy.

To visualize the homotopy $g_t$, choose coordinates for the normal $B^{m-1}$--bundle to the arc $\tau([0,1])$ so that $g$ is projection on the $t$ coordinate, that is, each normal $B^{m-1}$ maps to a point of $N$. Then $g_t$ should equal $g_0$ on the boundary $S^{m-2}$ of each normal $B^{m-1}$ and should push the interior of this $B^{m-1}$ down $N$ until the center of $B^{m-1}$ has gone past $q$. If the center is below $q$, then a normal $S^{m-2}$ will map to $q$, and these spheres will make up the cylinder $I \times S^{m-2}$ which connects the two components of the fiber.

In order not to create extra components during the homotopy $g_t$, the interiors of the normal $B^{m-1}$'s need to be pushed across $q$ in turn, starting monotonically at $t=1/4$ and $t=3/4$, and finally at $t=1/2$. To organize this, choose a parabolic arc $\lambda$ as drawn in Figure~\ref{F:Tubing} and first push all the centers of the normal $B^{m-1}$'s, i.e. $\tau(t)$, down to $\lambda$, and then push the centers down as though $\lambda$ was being translated down.

The details are as follows:

\begin{lemma} \label{L:PathHomotopyToMorseHomotopy}
Suppose that $g : M^m \to N^1$ is a Morse function and that $\tau : I \to M$ is a smooth embedded path avoiding the critical points of $g$ such that $\gamma = g \circ \tau$ is homotopically trivial (rel. endpoints) in $N$. Let $q \in N$ be a regular value for both $g : M \to N$ and $\gamma : [0,1] \to N$, with $\gamma^{-1}(q) = \{1/4,3/4\}$. Suppose that $\Gamma : I \times I \to N$ is a homotopy, with $\Gamma(0,x) = \gamma(x)$, such that $q$ is regular for $\Gamma$ and such that $\Gamma^{-1}(q)$ is an arc $a \subset I \times I$ from $(0,1/4)$ to $(0,3/4)$ and such that projection onto the first factor of $I \times I$ restricts to $a$ as a Morse function with a single index $1$ critical point at $(1/2,1/2)$. Then, inside any neighborhood of $\tau(I)$ there is an embedding $\overline{\tau} : B^{m-1} \times I \to M$, with $\overline{\tau}(\mathbf{0},x) = \tau(x)$ such that, letting $\overline{\gamma} = g \circ \overline{\tau} : B^{m-1} \times I \to N$, we have a homotopy $\overline{\Gamma} : I \times B^{m-1} \times I \to N$ satisfying the following properties:
\begin{enumerate}
 \item For any $(0,\mathbf{y},x) \in I \times B^{m-1} \times I$, $\overline{\Gamma}(0,\mathbf{y},x) = \overline{\gamma}(\mathbf{y},x)$.
 \item For any $(t,\mathbf{0},x) \in I \times B^{m-1} \times I$, $\overline{\Gamma}(t,\mathbf{0},x) = \Gamma(t,x)$.
 \item For any $(t,\mathbf{y},x) \in I \times ((S^{m-2} \times I) \cup (B^{m-1} \times \{0,1\}))$, $\overline{\Gamma}(t,\mathbf{y},x) = \overline{\Gamma}(0,\mathbf{y},x) = \overline{\gamma}(\mathbf{y},x)$.
 \item The point $q \in N$ is a regular value for $\overline{\Gamma}$.
 \item $\overline{\Gamma}^{-1}(q)$ is an $m$--dimensional submanifold of $I \times B^{m-1} \times I$ on which the projection onto the first factor of $I \times B^{m-1} \times I$ has a single Morse critical point of index $1$ at $(1/2,\mathbf{0},1/2)$. This is illustrated in Figure~\ref{F:PathHomotopyToMorseHomotopy}.
 \item This submanifold $\overline{\Gamma}^{-1}(q)$ intersects the boundary of $I \times B^{m-1} \times I$ as  
\[ (\{0\} \times B^{m-1} \times I) \cap \overline{\Gamma}^{-1}(q) = \{0\} \times B^{m-1} \times \{1/4,3/4\} \]
 and
\[  (I \times S^{m-2} \times I) \cap \overline{\Gamma}^{-1}(q) =  I \times S^{m-2} \times \{1/4,3/4\}. \]
 \item In particular, this implies that $(\{1\} \times B^{m-1} \times I) \cap \overline{\Gamma}^{-1}(q)$ is properly embedded in $\{1\} \times B^{m-1} \times I$ and diffeomorphic to $S^{m-2} \times [1/4,3/4]$, only meeting the boundary of $\{1\} \times B^{m-1} \times I$ at $S^{m-2} \times \{1/4,3/4\}$.
\end{enumerate}
\end{lemma}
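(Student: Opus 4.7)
The plan is to construct $\overline\tau$ as a product tubular neighborhood in which $g$ is transverse to the $B^{m-1}$-slices, then define $\overline\Gamma$ by precomposing $\Gamma$ with a bump-function rescaling of the first coordinate, via the explicit formula $\overline\Gamma(t, \mathbf y, x) = \Gamma(\beta(\mathbf y)\, t,\, x)$ for a suitable $\beta \colon B^{m-1} \to [0,1]$.

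Since $\tau$ avoids the critical locus of $g$, the map $g$ is a submersion on a tubular neighborhood of $\tau(I)$, and because $\tau(I) \cong I$ is contractible this submersion admits a smooth trivialization with $\tau$ as a section. Picking a coordinate ball in the fiber containing $\tau(0)$ and extending by the trivialization produces an embedding $\overline\tau \colon B^{m-1} \times I \hookrightarrow M$ with $\overline\tau(\mathbf 0, x) = \tau(x)$ and $g \circ \overline\tau(\mathbf y, x) = \gamma(x)$ independent of $\mathbf y$; rescaling the ball fits the image into any prescribed neighborhood of $\tau(I)$. After a harmless reparametrization of $\Gamma$ near $x = 0$ and $x = 1$, which does not move $a = \Gamma^{-1}(q)$ since $q \notin \{\gamma(0), \gamma(1)\}$, we may further assume $\Gamma(\cdot, 0) \equiv \gamma(0)$ and $\Gamma(\cdot, 1) \equiv \gamma(1)$. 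Now choose a smooth $\beta \colon B^{m-1} \to [0,1]$ equal to $1 - |\mathbf y|^2$ on a neighborhood of $\mathbf 0$ (so that $\mathbf 0$ is a unique nondegenerate maximum with $\beta(\mathbf 0) = 1$) and vanishing on a neighborhood of $S^{m-2}$ with no other critical points, and set $\overline\Gamma(t, \mathbf y, x) := \Gamma(\beta(\mathbf y)\, t,\, x)$. Properties (1)--(3) are immediate: at $t = 0$ we recover $\gamma(x)$, at $\mathbf y = \mathbf 0$ we recover $\Gamma(t, x)$, on $S^{m-2}$ the factor $\beta\, t$ vanishes and again yields $\gamma(x)$, and the rel-endpoints convention takes care of $B^{m-1} \times \{0, 1\}$.

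For properties (4)--(7), parametrize $a$ as $\sigma(u) = (\sigma_1(u), \sigma_2(u))$, $u \in [0, 1]$, with $\sigma_1$ having a unique nondegenerate maximum $\sigma_1(1/2) = 1/2$, $\sigma_1(0) = \sigma_1(1) = 0$, $\sigma_2(0) = 1/4$, and $\sigma_2(1) = 3/4$. Then $\overline\Gamma^{-1}(q)$ is cut out by $\beta(\mathbf y)\, t = \sigma_1(u)$ and $x = \sigma_2(u)$, smoothly parametrized on the open locus $\beta > 0$ by $(u, \mathbf y)$ via $t = \sigma_1(u)/\beta(\mathbf y)$. Regularity of $q$ for $\overline\Gamma$ follows from this parametrization together with $\partial_x \overline\Gamma = \gamma'(x) \neq 0$ at the endpoints of $a$. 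The gradient of the $t$-projection in these local coordinates vanishes precisely where $\sigma_1'(u) = 0$ and $\sigma_1(u)\,\nabla\beta(\mathbf y) = 0$; since $\sigma_1(1/2) = 1/2 \neq 0$ this forces the unique critical point $(u, \mathbf y) = (1/2, \mathbf 0)$, corresponding to $(t, \mathbf y, x) = (1/2, \mathbf 0, 1/2)$. The Hessian decouples into one negative entry $\sigma_1''(1/2)/\beta(\mathbf 0) < 0$ in the $u$-direction and the positive-definite block $-\tfrac12 D^2\beta(\mathbf 0)$ in $\mathbf y$, giving Morse index $1$. Properties (6) and (7) are read directly off the cutting equations on the faces $t = 0$, $\mathbf y \in S^{m-2}$, and $t = 1$. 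The main obstacle is precisely this Morse calculation: the index and uniqueness are engineered by pairing the nondegenerate maximum of $\beta$ at $\mathbf 0$ with the nondegenerate maximum of $\sigma_1$ at $u = 1/2$, so that the two sources of critical behavior land at the same point and combine with matching signatures.
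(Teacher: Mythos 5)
The construction of $\overline{\tau}$ at the start of the argument is where the proposal breaks down. You ask for an embedding $\overline{\tau} : B^{m-1} \times I \to M$ with $\overline{\tau}(\mathbf 0, x) = \tau(x)$ and $g \circ \overline{\tau}(\mathbf y, x) = \gamma(x)$, independent of $\mathbf y$. This is impossible whenever $\gamma'$ has a zero. If $\gamma'(x_0) = 0$, then $\tau'(x_0)$ lies in the tangent space to the level set $g^{-1}(\gamma(x_0))$; but the slice condition also forces $D\overline{\tau}(T_{\mathbf 0}B^{m-1} \times \{0\})$ into that same $(m-1)$-dimensional tangent space, so $D\overline{\tau}$ at $(\mathbf 0, x_0)$ has rank at most $m-1$ and $\overline{\tau}$ fails to be an immersion. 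The phrase ``this submersion admits a smooth trivialization with $\tau$ as a section'' quietly assumes $\tau$ is transverse to the fibers of $g$, i.e.\ that $\gamma$ is a local diffeomorphism, which is not part of the hypotheses. And it is not a degenerate edge case: in the very application of this lemma (connecting components of an $S^1$-valued level set), one has $\gamma(1/4) = \gamma(3/4) = q$ with $\gamma$ nonconstant on $[1/4, 3/4]$, so Rolle guarantees a critical point of $\gamma$ there.

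The paper sidesteps this by imposing only the much weaker requirement that $\overline{\gamma}^{-1}(q) = B^{m-1} \times \{1/4, 3/4\}$ (a condition at two slices, achievable because $q$ is a regular value of $g$ and of $\gamma$), and then it does not try to write $\overline{\Gamma}$ by a global formula at all. Instead it prescribes $\overline{\Gamma}$ on the retract $C$ consisting of the initial slice, the side walls, the core $I \times \{\mathbf 0\} \times I$, and a model saddle hypersurface $H$, extends to a regular neighborhood of $C$ so that $q$ is regular and the boundary of the neighborhood maps off $q$, and then extends to the whole cube using the deformation retraction onto $C$. Your formula $\overline{\Gamma}(t,\mathbf y, x) = \Gamma(\beta(\mathbf y) t, x)$ is elegant and your Morse-index computation for it is correct, but it needs the global ``$\overline{\gamma}$ independent of $\mathbf y$'' property to even satisfy item (1) of the lemma, and that property does not exist; patching the discrepancy (e.g.\ by adding the correction $\overline{\gamma}(\mathbf y,x) - \gamma(x)$) ruins the clean description of $\overline{\Gamma}^{-1}(q)$. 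Either you need to state and justify the extra hypothesis that $\gamma$ is an immersion (which does not hold in the intended application), or you need to abandon the global formula in favor of a prescribe-and-extend argument like the paper's.
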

\begin{figure}[ht!]
\labellist
\small\hair 2pt
\pinlabel $B^{m-1}$ [l] at 179 25
\pinlabel $I$ [t] at 81 3
\pinlabel $I$ [r] at 3 61
\pinlabel $\overline{\Gamma
}^{-1}(q)$ [t] at 98 60
\endlabellist
\begin{center}
\includegraphics{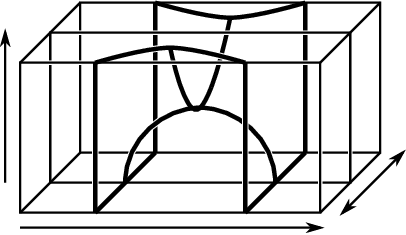}
\caption{The model for $\overline{\Gamma}^{-1}(q)$ in the statement of Lemma~\ref{L:PathHomotopyToMorseHomotopy}.}
\label{F:PathHomotopyToMorseHomotopy}
\end{center}
\end{figure}

\begin{proof}
Let $\overline{\tau}$ be a parametrization of a small neighborhood of $\tau([0,1])$ such that $\overline{\gamma}^{-1}(q) = B^{m-1} \times \{1/4,3/4\}$. Let $H$ be a model saddle hypersurface in $I \times B^{m-1} \times I$ satisfying the behavior given in the statement of the lemma for $\overline{\Gamma}^{-1}(q)$. Then, the constraints given for $\overline{\Gamma}$, where we ask that $\overline{\Gamma}^{-1}(q) = H$, completely determine $\overline{\Gamma}$ on the following subset $C$ of $I \times B^{m-1} \times I$:
\[ C = (\{0\} \times B^{m-1} \times I) \cup (I \times S^{m-2} \times I) \cup H \cup (I \times \{\mathbf{0}\} \times I) \cup (I \times B^{m-1} \times \{0,1\}). \]
There is a natural extension of this prescribed behavior of $\overline{\Gamma}$ on $C$ to a smooth function on a regular neighborhood $\nu$ of $C$ so that $q$ becomes a regular value and such that $\overline{\Gamma}(\partial \nu)$ avoids $q$. Then note that $I \times B^{m-1} \times I$ deformation retracts onto $C$, so there is a continuous extenstion of $\overline{\Gamma}$ to all of $I \times B^{m-1} \times I$, which can be smoothed and made generic without changing the behaviour on $\nu$, and without introducing any components of $\overline{\Gamma}^{-1}(q)$ outside $\nu$.

\end{proof}

\begin{proof}[Existence: Proof of Theorem~\ref{T:1Existence}]
We have already proved this for the case of maps to $B^1$. Now we consider the case of maps to $S^1$. We are given a homotopically nontrivial map $g' : M \to S^1$ and we want to produce an indefinite Morse function $g : M \to S^1$ which is homotopic to $g'$. To simplify notation, we will simply refer to all our maps to $S^1$ as $g$, modifying $g$ in stages by homotopies. So we begin with $g=g'$. Because $g$ is homotopically nontrivial, we can lift to a finite cover of $S^1$ so that the induced map on $\pi_1$ is surjective, and thus reduce to the case $g_*(\pi_1(M))=\pi_1(S^1)$. Fiber-connectedness is not preserved under post-composition with a covering space map, but we are not asking for fiber-connectedness unless $g$ is $\pi_1$--surjective to begin with.

So now assume $g$ is $\pi_1$--surjective and Morse. Identify $S^1$ with $\R / \Z$ so that $0$ is a regular value. We want to homotope $g$ to arrange that $g^{-1}(0)$ is connected. (The new $g$ should still be Morse and $0$ should still be regular.) Then our theorem will reduce to the cobordism case, Theorem~\ref{T:MorseExistence}, by cutting $M$ open along $g^{-1}(0)$. 

Choose some $\epsilon>0$ such that there are no critical values of $g$ in $[-\epsilon,0]$.
Choose two components of $g^{-1}(0)$ as in Proposition~\ref{P:Arcs}, and, extending the arc produced in the proposition, choose a path $\tau : [0,1] \to M$ connecting them in $M$, but starting and ending in $g^{-1}(-\epsilon)$ and passing through the two components at $\tau(1/4)$ and $\tau(3/4)$, respectively, such that $\gamma = g \circ \tau$ is homotopic to $0 \in \pi_1(S^1,-\epsilon)$. We can then choose a homotopy $\Gamma : I \times I \to N$ as in the hypotheses of Lemma~\ref{L:PathHomotopyToMorseHomotopy}, with $0$ being the regular value $q$ in the lemma. Then the embedding $\overline{\tau}$ and the homotopy $\overline{\Gamma}$ gives us a homotopy $g_t$ defined as the identity outside the image of $\overline{\tau}$ and as $g_t(p) = \overline{\Gamma}(t,\overline{\tau}^{-1}(p))$ for $p$ in the image of $\overline{\tau}$. The effect on $g^{-1}(0)$ is to replace two $B^{m-1}$ neighborhoods in $g^{-1}(0)$ of the two points $\tau(1/4)$ and $\tau(3/4)$ with a tube diffeomorphic to $S^{m-2} \times I$, and thus 
the two components get connected. Repeating, we connect all the components of $g^{-1}(0)$.

Note that the final homotopy is a concatenation of homotopies each supported in a neighborhood of an arc in $M$. Thus we could just as well have performed all the homotopies at the same time, as long as we can choose all the relevant arcs to be disjoint from the beginning, which we can do if $m \geq 3$. This point of view is more useful in the next proof.

\end{proof}

\begin{proof}[Uniqueness: Proof of Theorem~\ref{T:1Uniqueness}]
 Again, we have already proved this in the case of homotopies between maps to $B^1$, so now we consider the case of homotopies between maps to $S^1$. We are given two homotopic indefinite Morse functions $g_0, g_1 : M \to S^1$ and we need to produce an indefinite homotopy $g_t$ between them; when $g_0$ and $g_1$ are fiber-connected then $g_t$ should also be fiber connected for all $t$. Throughout the rest of this proof suppose we are trying to prove both indefiniteness and fiber-connectedness. Assuming that $g_0$ and $g_1$ are indefinite and fiber-connected implies that both maps are $\pi_1$--surjective. The proof without asking for fiber-connectedness follows by lifting to an appropriate cover.

Again, identify $S^1$ with $\R / \Z$ and assume that $0$ and $1/2$ are regular values of both $g_0$ and $g_1$. We first choose any generic homotopy $g_t$, and we will show how to modify $g_t$ so as to arrange the following connectedness of level sets property: For a sequence of values $0=t_0 < t_1 < \ldots < t_{2n}=1$, we have that $g_t^{-1}(0)$ is connected and regular on $[t_0,t_1] \cup [t_2,t_3] \cup \ldots \cup [t_{2n-2},t_{2n-1}]$ and that $g_t^{-1}(1/2)$ is connected and regular on $[t_1,t_2] \cup [t_3,t_4] \cup \ldots \cup [t_{2n-1},t_{2n}]$.

As in the proof of Theorem~\ref{T:1Existence} we can make either of the level sets $g_t^{-1}(0)$ or $g_t^{-1}(1/2)$ connected for a fixed $t$, as long as $0$ or $1/2$ is a regular value. The construction depends on a choice of arcs in $M$ based at the level set in question, and then produces a homotopy supported in a neighborhood of those arcs and loops. Even if one of the level sets contains a single Morse critical point, we can keep the arcs and loops away from that critical point, either connecting the entire singular level set if the critical point is indefinite or connecting all the components other than the single point component when the critical point is a minimum or maximum. 

Now note that the arcs and the guiding homotopies in $S^1$ which are used for a fixed value $t=t_0$ will work for all $t$ in some short interval $[t_0-\epsilon,t_0+\epsilon]$, modulo modifying the arcs by small ambient isotopies near their endpoints. Also note that, in the middle of one of these homotopies, we do not introduce new components of the level set in question. This is because, in Lemma~\ref{L:PathHomotopyToMorseHomotopy}, $\overline{\Gamma}^{-1}(q)$ has a single critical point of index $1$ with respect to projection onto the first factor of $I \times B^{m-1} \times I$, so for each $t$, $(\{t\} \times B^{m-1} \times I) \cap \overline{\Gamma}^{-1}(q)$ either has two components (for $t < 1/2$) or one component (for $t \geq 1/2$), but never more.

Thus, for a fixed level set $g_t^{-1}(z_0)$, as long as $z_0$ is regular or a Morse critical value for each $t$, we can cover $[0,1]$ by a finite collection of such short 
intervals, using disjoint arcs and loops where the intervals overlap (recall that $m \geq 3$, where $m$ is the dimension of $M$), and use bump functions to patch together the homotopies over the whole of $[0,1]$. The result will be that $g_t^{-1}(z_0)$ is connected for all $t$ outside a short interval around each time $t_*$ at which $g_t^{-1}(z_0)$ contains a definite critical point. In these short intervals, $g_t^{-1}(z_0)$ will be connected, say, for all $t< t_*$, will be the disjoint union of a connected manifold and a point for $t=t_*$, will have an isolated sphere component for $t$ slightly larger than $t_*$, and then that component will get connected back to the rest of the level set immediately thereafter. (The phenomenon described in the preceding sentence could also occur with time reversed, of course.)

Do this once for $g_t^{-1}(0)$. Now do this for $g_t^{-1}(1/2)$, noting that generically $1/2$ and $0$ will never be critical values at the same time, and also noting that the arcs used to connect components of $g_t^{-1}(1/2)$ can be made to avoid $g_t^{-1}(0)$ because $g_t^{-1}(0)$ is now connected or has a single isolated sphere component that dead-ends immediately above or below $0$. Because these arcs are disjoint from $g_t^{-1}(0)$, the homotopy used to connect components of $g_t^{-1}(1/2)$ does not destroy the connectedness properties of $g_t^{-1}(0)$. We have also assumed here that non-Morse singularities, i.e. births and deaths, never occur at $0$ or $1/2$.

\textbf{The zig-zag argument:} We have now achieved the connectedness property advertised: For $0=t_0 < t_1 < \ldots < t_{2n}=1$, we have $g_t^{-1}(0)$ is connected and regular on $[t_0,t_1] \cup [t_2,t_3] \cup \ldots \cup [t_{2n-2},t_{2n-1}]$ and $g_t^{-1}(1/2)$ is connected and regular on $[t_1,t_2] \cup [t_3,t_4] \cup \ldots \cup [t_{2n-1},t_{2n}]$, as in Figure~\ref{F:ZigZag}.
If $g_0$ and $g_1$ agreed on, say, $g^{-1}_0(0) = g^{-1}_1(0)$, then we could cut open $M$ along $L$ and obtain two indefinite Morse functions to $I$, thus reducing, with the help of Lemma~\ref{L:Connected2Ordered}, to the already proven Theorem~\ref{T:MorseDeformationExistence}. The goal now is to arrange for this to be true for $g_{t_i}$ and $g_{t_{i+1}}$ for $0 = t_0 < t_1 < \ldots < t_{2n} = 1$.

\begin{figure}
\labellist
\small\hair 2pt
\pinlabel $t_0$ at 18 11
\pinlabel $t_1$ at 37 11
\pinlabel $t_2$ at 60 11
\pinlabel $t_3$ at 83 11
\pinlabel $C_1$ at 41 37
\pinlabel $C_2$ at 64 37
\pinlabel $C_3$ at 89 37
\pinlabel $C_4$ at 109 37
\pinlabel $F_{t_0}$ at 19 22
\pinlabel $F_{t_1}$ at 33 22
\pinlabel $F'_{t_1}$ at 42 51
\pinlabel $F'_{t_2}$ at 56 51
\pinlabel $F_{t_2}$ at 65 22
\pinlabel $F_{t_3}$ at 79 22
\pinlabel $F'_{t_3}$ at 88 51
\pinlabel $F'_{t_4}$ at 101 51
\pinlabel $=$ at 26 22
\pinlabel $=$ at 48 51
\pinlabel $=$ at 72 22
\pinlabel $=$ at 95 51
\pinlabel $reg$ at 26 14
\pinlabel $reg$ at 48 57
\pinlabel $reg$ at 72 14
\pinlabel $reg$ at 95 57
\pinlabel $\dots$ at 127 37
\pinlabel $t\in I$ at 82 4
\pinlabel $S^1$ at 13 73
\pinlabel $S^1$ at 148 73
\pinlabel $0$ at 10 18
\pinlabel $1/2$ at 8 54
\endlabellist
\centering
\includegraphics[scale=1.90]{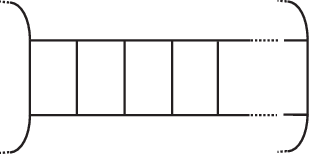}
\caption{The zig-zag argument.}
\label{F:ZigZag}
\end{figure}

Let $F_{t_i} = g_{t_i}^{-1}(0)$ and let $F_{t_i}' = g_{t_i}^{-1}(1/2)$. Note that $F_{t_0}$ is isotopic to $F_{t_1}$ because $0$ is regular for each $g_t$, $t \in [t_0,t_1]$. Similarly, $F_{t_{2i}}$ is isotopic to $F_{t_{2i+1}}$. We can assume they are, in fact, equal. The same works for $F'_{t_{2i+1}}$ and $F'_{t_{2i+2}}$.

The vertical cobordism $C_i = g_{t_i}^{-1}[0,1/2]$ between $F_{t_i}$ and $F'_{t_i}$ is connected because the top, $g_{t_i}^{-1}(1/2)$, is connected, as is the bottom, and because $g_{t_i} : M \to S^1$ is surjective on $\pi_1$. (For, given $p,q \in C_i$, there is an arc joining them in $M$ which intersects $F_{t_i}$ and $F'_{t_i}$ algebraically $0$ each, and now connectedness of $F_{t_i}$ and $F'_{t_i}$ allows the arc to be replaced by an arc in $C_i$.)

Now for each $i=1, \ldots, 2n-1$ we construct, by Theorem~\ref{T:MorseExistence}, intermediate functions $h_{t_i}: M \to S^1$ homotopic to $g_{t_i}$ such that $h_{t_i}^{-1}[0,1/2] = C_i$ and $h_{t_i}$ is indefinite and fiber-connected.

Then $g_0$ and $h_{t_1}$ agree on the level set $F_{t_0} = g_0^{-1}(0)$ and are homotopic rel. this level set and thus are joined by an indefinite fiber-connected homotopy (using Lemma~\ref{L:Connected2Ordered} and Theorem~\ref{T:MorseDeformationExistence}). Also $h_{t_1}$ and $h_{t_2}$ agree on the level set $F'_{t_2} = h_{t_2}^{-1}(1/2)$ so they are also joined by an indefinite fiber-connected homotopy. Repeat this up and down zig-zag construction up to $h_{t_{2n-1}}$, and then finally join $h_{t_{2n-1}}$ to $g_{t_{2n}}=g_1$. This ends the proof.
\end{proof}

We have proved Theorem~\ref{T:Existence} when $\Sigma^2$ is $B^2$, and we could
prove the case $\Sigma = S^2$ by homotoping $G$ so that a fiber $F$ is
connected and then removing a $B^2$-bundle neighborhood of $F$ to
reduce to the case $\Sigma = B^2$.  However when $\Sigma$ is closed
and not $S^2$, there can be problems keeping $G$ an epimorphism on
$\pi_1$ after removing $F$.  To resolve this issue, we need some
constructions which use the following lemma. This is essentially the Morse $2$--function version of Lemma~\ref{L:PathHomotopyToMorseHomotopy}. However, when written as a direct analog of Lemma~\ref{L:PathHomotopyToMorseHomotopy}, the statement of this lemma becomes unwieldy.

\begin{lemma} \label{L:ConnectingM} 
Let $G:X \to \Sigma$ be a Morse $2$-function. Let $a$ be a non-separating simple closed
curve or properly embedded arc in $\Sigma$ which meets all folds
transversely so that $G^{-1}(a) = M^{n-1}$ is a smooth manifold.  Let
$\beta : [-1,2] \to X$ be a smoothly embedded arc in $X$ which
intersects $M$ transversely in two points, $\beta(1)$ and $\beta(0)$,
of opposite sign.  Let the arc $b:[-1,2] \to \Sigma$ be defined as $b
= G \circ \beta$. Now suppose that $b|_{[0,1]}$ is homotopic by $h_s, s \in [0,1]$ to an
arc joining $G(\beta(1))$ and $G(\beta(0))$, and call this arc $a'$.

Then there exists a homotopy $G_s, s \in [0,2]$ satisfying:

\begin{enumerate}

\item $G_0 = G$,

\item $G_s = G$ outside of an arbitrarily thin tubular neighborhood of $\beta ([-1,2])$,

\item If the homotopy $h_s$ of $b$ never takes $b$ across $a$ except
  at time $s=1$, then $G_2^{-1}(a)$ equals $M$ surgered along the
  $0$-sphere, $\beta(1) \cup \beta(0)$, thus connecting the components
  containing $\beta(1)$ and $\beta(0)$ by a tube $[0,1] \times
  S^{n-2}$.

\item If $h_s$ does take $b$ across $a$, then $G_2^{-1}(a)$ is the result of $0$--surgery on $M$ as described above together with the possible addition of new closed components.

\end{enumerate}

\end{lemma}

\begin{proof} 
The argument is a version of the well known Thom-Pontrjagin
method for calculating $\pi_n (S^2)$ by simplifying the preimage of the
north pole of $S^2$ by first connecting its components.

First it is easy to alter $h_s$ so that the end $h_1$ of the homotopy 
taking $b([0,1])$ to $a'$ is a diffeomorphism, 

$M$ in $X$ has a trivial normal line bundle which locally is
compatible with the normal lines to $a$ in $\Sigma$, meaning that the
lines map to lines.  Then, using this product structure around
$\beta(1)$ and $\beta(0)$ and $a \subset \Sigma$, it is easy to extend
the homotopy $h_s$ to a homotopy of the full arc $b: [-1,2] \to \Sigma$, with the homotopy time parameter $s$ extended from $[0,1]$ to $[0,2]$, so that points
under $h_s, s \in [0,2]$ follow the curved lines beginning at $b$ and ending 
up on a parallel copy of $a'$ called $a''$ as drawn in Figure~\ref{F:ConnectingM}.

\begin{figure}[ht!]
\labellist
\small\hair 2pt
\pinlabel $a''$ [r] at 35 35
\pinlabel $a'$ [l] at 109 34
\pinlabel $a$ [br] at 115 64
\pinlabel {$b = G \circ \beta$} [l] at 311 33
\pinlabel $G$ [l] at 148 110
\pinlabel $X^n$ at 160 252
\pinlabel $\beta(-1)$ [r] at 37 324
\pinlabel $\beta(0)$ [br] at 109 324
\pinlabel $\beta(2)$ [r] at 37 181
\pinlabel $\beta(1)$ [br] at 109 181
\pinlabel $M^{n-1}$ [t] at 109 135
\pinlabel $M^{n-1}$ [b] at 109 370
\pinlabel $t_0$ [br] at 274 204
\pinlabel {$\theta_0 \in S^{n-2}$} [tl] at 290 174
\pinlabel $B_{t_0}^{n-1}$ [l] at 338 183
\endlabellist
\begin{center}
\includegraphics{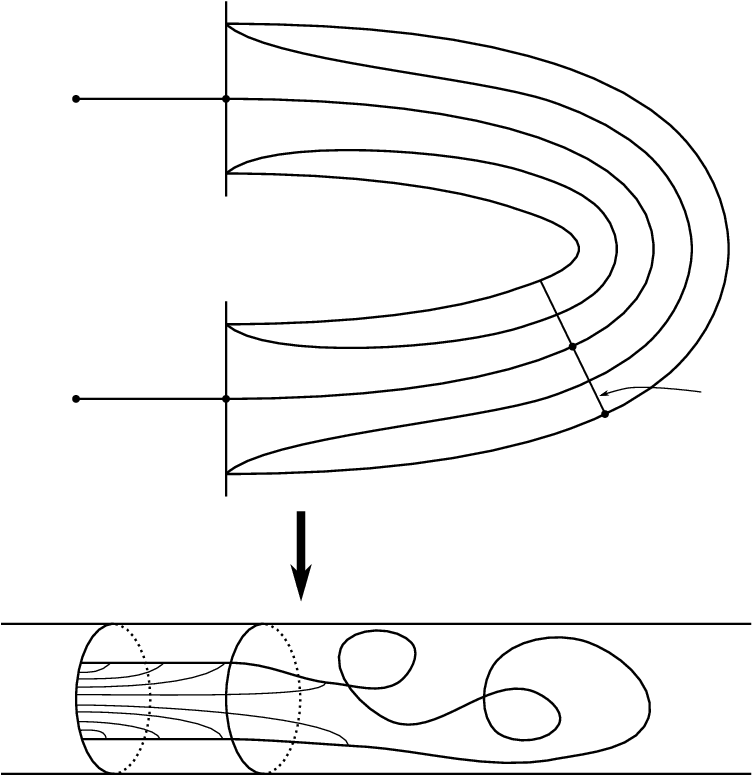}
\caption{Connecting components of $M$.} \label{F:ConnectingM}
\end{center}
\end{figure}

The arc $\beta$ has a tubular neighborhood, $\beta \times B^{n-1}$,
and each $B^{n-1}$ has {\it polar} coordinates $(t,r,\theta)$ where $t
\in [-1,2], r \in [0,1], \theta \in S^{n-2}$ (see Figure~\ref{F:ConnectingM}).  A ray
$(t_0, r, \theta_0), r \in [0,1]$ determined by the pair $(t_0,
\theta_0)$ is mapped by $G$ to a path $\rho=\rho_{t_0, \theta_0} \subset
\Sigma$. The endpoint of $\rho$ at $b$ is moved by the homotopy $h_s,
s \in [0,2]$, along a path ending on $a''$; extend $\rho$ to this
longer path $\bar{\rho}$.

The homotopy of $G$ now maps the ray $(t_0, r, \theta_0)$ to $\rho$
and as time progresses stretches the ray over more and more of
$\bar{\rho}$ until it is onto.  It is easy to see that this homotopy
is constant on $\beta$'s normal sphere bundle, and it is also constant
on the {\it top} and {\it bottom} of this cylinder because these points in
$G^{-1}(a'')$ are also not moved.  At the end $G_2$ is a Morse $2$-function on a
neighborhood of $M$ (because we chose endpoints of $b$ disjoint from folds and then $M$ and nearby copies change only by surgery on small neighborhoods of the end points of $\beta$) but may need to be perturbed to make it a Morse
$2$-function again elsewhere.

The difference between the last two parts of the Lemma is fairly
evident. The surgery statement follows because neighborhoods of
$\beta(1)$ and $\beta(0)$ are pushed off of $a'$ and replaced by a
horn shaped cylinder which is mapped to $a'$, as can be seen in Figure~\ref{F:ConnectingM}.  And if the original homotopy $h_s$ does move points across $a'$, then
closed components can be added to $M$.

\end{proof}

\begin{proof}[Existence: Proof of Theorem~\ref{T:Existence}]
 Here we are given a compact, connected, oriented $n$--manifold $X$
 and a $2$--manifold $\Sigma$ and an indefinite, surjective Morse
 function $g : \partial X \to \partial \Sigma$ which extends to a map
 $G' : X \to \Sigma$. We wish to homotope $G'$ rel. boundary to an
 indefinite Morse $2$--function $G : X \to \Sigma$, and perhaps also
 arrange fiber-connectedness. Again, we will drop the primes and
 simply refer to all our maps to $\Sigma$ as $G$, constructing or
 modifying $G$ in stages.

The base case is when $\Sigma$ is the disk $B^2$, which we have
already addressed.  We postpone the case $\Sigma=S^2$ briefly until Remark~\ref{R:ConnectingFibers} below, and now we reduce all other cases to the case $\Sigma=B^2$. The given map $G$ can be homotoped to a Morse $2$--function; we
need to make it indefinite if $[G_*(\pi_1(X)):\pi_1(\Sigma)]<\infty$
and fiber-connected if $G_*(\pi_1(X)) = \pi_1(\Sigma)$. If
$[G_*(\pi_1(X)):\pi_1(\Sigma)]<\infty$ we can lift to a finite cover
of $\Sigma$ and reduce to the case that $G$ is $\pi_1$--surjective.

First suppose that $\Sigma$ and $X$ are closed, in which case there is
no boundary condition $g$. We first arrange that a nice basis for
$\pi_1(\Sigma,\sigma_0)$ lifts to $X$ (we only know that the basis is
homotopic to one that lifts). We do this as follows:

Describe $\Sigma$ as a $0$-handle, $2g$ $1$-handles and a $2$-handle in the standard way, with the $1$--handles coming in dual pairs.
Let $a_1, a_2,\ldots, a_{2g}$ be the cores of the $1$-handles and
$\bar{a}_1, \ldots, \bar{a}_{2g}$ be the extensions of these arcs to smooth 
loops by ``coning'' their end points to the core, $\sigma_0$,  of the
$0$-handle in a smooth way.  We can assume that $G^{-1}(\bar{a}_i) = M^{n-1}$ is a manifold.

Focus on $a_1$ and $\bar{a}_1$ and drop the subscript for simplicity.
$\bar{a}$ is homotopic to a loop $\bar{b}$ which lifts to $X$, and the
homotopy fixes $\sigma_0$ and can be made to fix the arc,
$\bar{a} -a$, also.  This gives an arc $b$ with $\partial a = \partial
b$.  Let $\beta$ be the lift of $b$ to $X$.

We can assume that $\beta$ meets $M=G^{-1}(\bar{a})$ transversely at
$\partial \beta$ and that these two intersection points have opposite
signs.  Now apply Lemma~\ref{L:ConnectingM} to homotope $G$ so that $a$, hence
$\bar{a}$, has a lift $\alpha$ to $M \subset X$.  Do the same process for each
$a_i$, noting that the arcs $\beta_i$ need not intersect, nor their
thin neighborhoods.  Also note that all the lifts $\alpha_i$ can contain the basepoint of $x_0 \in X$. 

Thus we have homotoped $G$ so that each $a_i$ and $\bar{a_i}$ have
lifts which we call $\alpha_i$ and $\bar{\alpha_i}$.  Now we want to
apply Theorem 1.1 to an $M_i = G^{-1}(\bar{a_i})$, where we know $G$
is an epimorphism on $\pi_1$, but $M=M_i$ may not yet be connected.

We will make $M=M_1$ connected by again using Lemma~\ref{L:ConnectingM}. It suffices to show how to homotope $G$ so as to connect two points, $p$ and $q$ in $M =
G^{-1}(\bar{a})$, where $p$ belongs to the component of $X$ containing $x_0$, without introducing any new components in the process.  

For this we need some notation.  The two dual curves $\bar{a_1}, \bar{a_2}$, define a punctured torus $T_0$ and its one-point compactification $T$, and there is a projection from $X$ to $\Sigma$ to $T$ to $\bar{a_i}, i = 1,2$, and we name the composition $p_i, i=1,2$.

In $X$, $p$ and $q$ are connected by an arc $\beta$
which intersects $M$ transversely in some points including $p$ and
$q$. We form a loop $\gamma$ in $\Sigma$ by joining the endpoints of $G(\beta)$
by the subarc $a'$ of $a$ which does not contain the basepoint $\sigma_0$.  We want to arrange that $\gamma$ is homotopically trivial in $\Sigma$.

First, it may be that $p_1 (\gamma)$ is not homotopically trivial, so we connect sum $\beta$ in $M$ with multiples of the lift of $\bar{a_1}$, $\alpha_1$, so that it is now homotopically trivial.  Recall that $p$ belongs to the component containing $x_0 \in \alpha_1$  so the connect sum is taken near $x_0$.  And we push the copies of $\alpha_1$ slightly to one side of $M$ so as to avoid unnecessary intersections.

Next consider whether the new $\gamma$ projected by $p_2$ to $\bar{a_2}$ is homotopically trivial.  If not, we connect sum parallel copies of $\alpha_2$ (which lie in $M_2$) to arrange triviality.  Furthermore we choose these parallel copies so that all their projections to $\Sigma$ all lie on the same side of the base point $\sigma_0$. 

Continue in this way with the other $\alpha_i$'s so as to arrange that $\gamma$ is homotopically trivial in $\Sigma$. Note that the other $\alpha_i$'s do not intersect $M_1 = M$.

Next we want to arrange that $\gamma$ not intersect $\bar{a_1}$ except along $a'$.  For this, consider the universal cover of $\Sigma$ in which we see ``parallel'' copies of the lift of $\bar{a_1}$, and also a copy of the lift of $\gamma$ which contains a given lift of the base point, see Figure~\ref{F:UniversalCover}.  Look for the last (rightmost in Figure~\ref{F:UniversalCover}) lift of $\bar{a_1}$ which intersects the lift of $\gamma$.  Pick a subarc $\lambda$ of the lift of $\gamma$ which intersects this lift of $\bar{a_1}$ in its two endpoints, necessarily of opposite sign.  If that subarc $\lambda$ has endpoints in the same component of $M$, then connect them in $M$, changing $\beta$ and $\gamma$ by removing these two points of intersection.  Proceed until we get two points in different components.  Then $\lambda$, projected back to $\Sigma$, is extended to a contractible loop, which we again call $\gamma$, by adding a segment of $a_1$. The subarc $\lambda$ corresponds to a subarc of $\beta$ which intersects $M$ only in its endpoints, with opposite sign. Now apply Lemma~\ref{L:ConnectingM} using this subarc of $\beta$ to connect two distinct components of $M$, without introducing any new components because the loop $\gamma$ now lies to one side of $a_1$. These  steps are iterated to make $M$ connected.

At this point we pause in the proof for a useful remark, needed for the case $\Sigma=S^2$ and for the proof of Theorem~\ref{T:Uniqueness} below.

\begin{remark} \label{R:ConnectingFibers} Notice that if the two points $b(0)$ and $b(1)$ which belong to $M$, actually belong to a single fiber of $G$, then the construction we have just described connects the two components of that fiber.  And if the hypothesis of part 3 of the Lemma holds, namely that the homotopy $h_t$ does not move $b$ across $a$ except at time $t=1$, then no components have been added to the fiber.  Thus an iteration of these steps can be used to make a single fiber connected.

\end{remark}

When $\Sigma=S^2$, this remark shows us how to homotope $G$ so that a regular fiber is connected, and then removing the fiber cross disk reduces to the case $\Sigma=B^2$.

\begin{figure}[ht!]
\labellist
\small\hair 2pt
\pinlabel $\tilde{\sigma_0}$ [r] at 62 38
\pinlabel $\gamma$ [b] at 99 160
\pinlabel $\tilde{\bar{a_1}}$ [r] at 53 168
\pinlabel $\lambda$ [l] at 150 115
\endlabellist
\begin{center}
\includegraphics{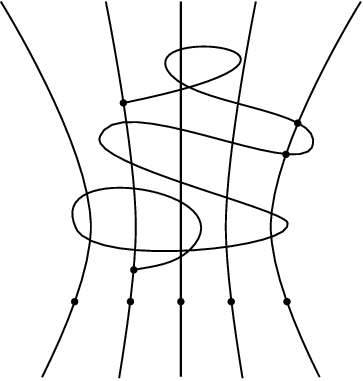}
\caption{Shortening $\gamma$ in the universal cover of $\Sigma$ (the hyperbolic plane, or $\mathbb{R}^2$ if $\Sigma=T^2$).} \label{F:UniversalCover}
\end{center}
\end{figure}

We now return to the general case $\Sigma \neq S^2$. Now $G|_M: M \to \bar{a} = \bar{a}_1 = S^1$ is $\pi_1$--surjective and
$M$ is connected, so by Theorem 1.3, $G|_M$ can be homotoped to be
indefinite with connected fibers.  This homotopy extends to $X$, moving
points only in a thin product neighborhood, $M \times [-1,1]$ in $X$.
Now we remove $M \times (-1,1)$ from $X$ and $\bar{a} \times (-1,1)$
from $\Sigma$ and reduce to the case $\partial X \neq \emptyset$.

Note that after removing $M \times (-1,1)$, $G$ is still surjective on $\pi_1$, because we still have the lifts, $\alpha_i$, of the remaining generators $\bar{a_i}, i\geq 2$ of the fundamental group of $\Sigma \setminus \bar{a_1}$.

The proof of the case in which $\partial \Sigma \neq \emptyset$, assuming only that $\pi_1$ is onto and
that $G|_{\partial X}$ is indefinite with connected fibers, proceeds
nearly identically to the case just done.  Again we describe $\Sigma$
as a $0$-handle and some $1$-handles, and arrange that the loops
defined by the cores of the $1$-handles lift to loops in $X$, in order
to make sure that $G$ remains $\pi_1$--surjective during the remaining steps.

We now apply the technique we just used to make $M$ connected to make the inverse image of a nonseparating arc from boundary to boundary connected, e.g. a co-core of a $1$-handle. The universal cover argument follows in the same way, and we apply Theorem 1.3 in the interval-valued Morse function case rather than the circle-valued case. Then we cut $\Sigma$ along this arc and $X$ along its preimage and proceed inductively. After cutting, $G$ is still $\pi_1$--surjective because the remaining generators of $\pi_1$ still have their lifts.

\end{proof}

\begin{proof}[Uniqueness: Proof of Theorem~\ref{T:Uniqueness}]
Now we are given a compact, connected, oriented $n$--manifold $X$ and a $2$--manifold $\Sigma$ and two indefinite Morse $2$--functions $G_0,G_1 : X \to \Sigma$ which agree on $\partial X$ and are homotopic rel $\partial$. We need to construct an indefinite generic homotopy $G_s$ between them, which is fiber-connected when $G_0$ and $G_1$ are fiber-connected. Begin with an arbitrary generic homotopy $G_s$ and we will modify this in stages, referring to it as $G_s$ before and after each modification. We also reduce to the fiber-connected case by lifting to a cover, as in preceding proofs.

As in the preceding proof, the base case is when $\Sigma = B^2$, which we have addressed. We also postpone the case $\Sigma=S^2$ and now reduce by cutting along closed curves and/or arcs to the case $\Sigma=B^2$, using induction on $-\chi(\Sigma)$.

Choose a nonseparating simple closed curve (if $\Sigma$ is closed) or properly embedded arc (if $\partial \Sigma \neq \emptyset$) $a$ which is transverse to the folds of both $G_0$ and $G_1$, and let $a'$ be a parallel copy of $a$ with the same transversality property. Figure~\ref{F:ZigZag2} illustrates the cases where $a$ and $a'$ are arcs, cobounding a rectangle $R$ with two sides in $\partial \Sigma$; for the closed case, $R$ would be a cylinder. We will parallel the zig-zag argument in the proof of the $S^1$--valued case of Theorem~\ref{T:1Uniqueness}, with $a$ and $a'$ playing the role that $0$ and $1/2$ played in that proof. We use Lemma~\ref{L:ConnectingM} to arrange the following property: For a sequence $0=s_0 < s_1 < \ldots < s_{2n}=1$, $G_s^{-1}(a)$ is connected and $G_s$ is $\pi_1$--surjective on the complement of $G_s^{-1}(a)$, for all $s \in [s_0,s_1] \cup [s_2,s_3] \cup \ldots \cup [s_{2n-2},s_{2n-1}]$, while the same is true for $a'$ on the other intervals $[s_1,s_2] \cup [s_3,s_4] \cup [s_{2n-1},s_{2n}]$. Again, this works because all the homotopies involved are supported in neighborhoods of arcs which can be taken to be disjoint. By further subdivisions if necessary, we can also arrange that, on the $s$--intervals where $G_s^{-1}(a)$ is connected, the restriction of $G_s$ to $G_s^{-1}(a)$ is a Morse function to $a$, and similarly for $a'$. Recall that, in this paper, Morse functions have distinct critical points mapped to distinct critical values. By Theorem~\ref{T:1Existence} we can also arrange that this Morse function $G_s : G_s^{-1}(a) \to a$ is indefinite with connected fibers.

\textbf{The zig-zag argument:} 
Now, in Figure~\ref{F:ZigZag2}, we set up the usual zig-zag argument where the vertical interval $[0,1/2]$ in Figure~\ref{F:ZigZag} is replaced by $R$, with $0$ replaced by $a$ and $1/2$ replaced by $a'$.
\begin{figure}
\labellist
\small\hair 2pt
\pinlabel $s_0$ at 16 -4
\pinlabel $s_1$ at 40 -4
\pinlabel $s_2$ at 67 -4
\pinlabel $s_3$ at 94 -4
\pinlabel $\partial \Sigma$ at 102 30
\pinlabel $\partial \Sigma$ at -7 39
\pinlabel $R$ at 7 33
\pinlabel $a$ at 4 8
\pinlabel $a'$ at 14 57
\endlabellist
\centering
\includegraphics[scale=1.20]{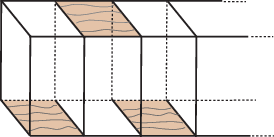}
\caption{The zig-zag argument for Morse $2$--functions}
\label{F:ZigZag2}
\end{figure}

Since the restriction of $G_s$ to $a$ is a Morse function for all $s \in [s_0,s_1]$, it follows that $G_{s_0}^{-1}(a)$ is isotopic, and hence can be taken to be equal, to $G_{s_1}^{-1}(a)$, and in fact $G_s$ can be taken to be independent of $s$ on $G_{s}^{-1}(a)$ for all $s \in [s_0,s_1]$. Similarly, $G_s$ is constant along $G_{s_1}^{-1}(a') = G_{s_2}^{-1}(a')$ for all $s \in [s_1,s_2]$, etc. Note that $G_{s_1}^{-1}(R)$ is a connected cobordism between the connected submanifolds $G_{s_1}^{-1}(a)$ and $G_{s_1}^{-1}(a')$, on each of which $G_{s_1}$ is indefinite and fiber-connected. Thus we can construct an intermediate indefinite, fiber-connected Morse $2$--function $H_{s_1}$ on $X$ which agrees with $G_{s_1}$ over $a$ and $a'$, and is homotopic rel. boundary to $G_{s_1}$ over both $R$ and the closure of the complement of $R$. Then $G_0$ agrees with $H_{s_1}$ over $a$, so there exists an indefinite fiber-connected homotopy between $G_0$ and $H_{s_1}$ (because $-\chi(\Sigma \setminus a) < -\chi(\Sigma)$ and the inductive hypothesis holds). Similarly we construct an appropriate $H_{s_2}$, so that $H_{s_1}$ and $H_{s_2}$ agree over $a'$, so they are homotopic via an indefinite fiber-connected homotopy. This argument can be iterated over each interval $[s_{2i},s_{2i+2}]$, to finish the proof when $\Sigma \neq S^2$.

When $\Sigma = S^2$, there are no $\pi_1$--surjectivity issues, and we can adapt the above zig-zag argument to use Remark~\ref{R:ConnectingFibers} to zig-zag back and forth between two fibers $G_s^{-1}(p)$ and $G_s^{-1}(q)$ as opposed to inverse images of arcs or closed curves $G_s^{-1}(a)$ and $G_s^{-1}(a')$. The rectangle $R$ is replaced by an arc between $p$ and $q$.

\end{proof}


\end{document}